\newcommand{\locs}{\mathrm{Loc}}
\definecolor{dark-red}{rgb}{0.5,0.15,0.15}
\definecolor{dark-blue}{rgb}{0.15,0.15,0.6}
\definecolor{dark-green}{rgb}{0.15,0.6,0.15}
\newcommand{\md}{\mathrm{Mod}}
\newcommand{\fun}{\mathrm{Fun}}
\newcommand{\loc}{\mathrm{Loc}}
\newcommand{\pic}{\mathrm{Pic}}
\newcommand{\gl}{\mathfrak{gl}}
\newcommand{\pics}{\mathfrak{pic}}
\renewcommand{\sp}{\mathrm{Sp}}
\newcommand{\st}{\mathrm{Stack}}
\newcommand{\otop}{\mathcal{O}^{\mathrm{top}}}
\renewcommand{\hom}{\mathrm{Hom}}
\numberwithin{equation}{section}
\newtheorem{lm}{lm}[section]
\newtheorem{corollary}[lm]{Corollary}
\newtheorem{theorem}[lm]{Theorem}
\newtheorem{proposition}[lm]{Proposition}
\renewcommand{\rightrightarrows}{\begin{smallmatrix} \to \\
\to \end{smallmatrix} }
\newcommand{\triplearrows}{\begin{smallmatrix} \to \\ \to \\ 
\to \end{smallmatrix} }
\theoremstyle{definition}
\newtheorem{definition}[lm]{Definition}
\renewcommand{\ell}{\mathrm{Ell}}
\theoremstyle{definition}
\newtheorem{construction}[lm]{Construction}
\newtheorem{remark}[lm]{Remark}
\newtheorem{example}[lm]{Example}
 \newcommand{\e}[1]{\mathbf{E}_{#1}}
\begin{document}

\title{Torus actions on  stable module categories, Picard groups, and localizing  
subcategories}
\date{\today}
\author{Akhil Mathew}
\address{Department of Mathematics, Harvard University, Cambridge, MA}
\email{amathew@math.harvard.edu} 

\keywords{Galois extensions, Picard groups, endotrivial modules, stable module
category, localizing subcategories, structured ring spectra}
\thanks{The author is supported by the NSF Graduate Research
Fellowship under grant DGE-1144152 and thanks the Hausdorff Institute of
Mathematics for hospitality during the period during which most of this
writing was done.}

\begin{abstract}
Given an abelian $p$-group $G$ of rank $n$, we construct an action of the torus
$\mathbb{T}^n$ on the stable module $\infty$-category of $G$-representations
over a field of characteristic $p$. The homotopy fixed points are given by the
$\infty$-category of module spectra over the Tate construction of the torus.
The  relationship thus obtained arises from a Galois extension in the sense of
Rognes, with Galois group given by the torus. 	
As one application, 
we give a homotopy-theoretic proof of Dade's classification of endotrivial modules for
abelian $p$-groups. 
As another application, we give a slight variant of a key step in the 
Benson-Iyengar-Krause proof of the classification of 
localizing subcategories of the stable module category. 
\end{abstract}

\maketitle

\tableofcontents

\renewcommand{\st}[1]{\mathrm{StMod}_{#1}}
\section{Introduction}

\subsection{Stable module categories}

Let $G$ be a finite group and let $k$ be a field of characteristic $p$. 
Our object of study is the \emph{stable module category} of $G$, denoted
$\st{G}$. 
The objects of $\st{G}$ are the $k[G]$-modules, i.e.,
$G$-representations over $k$. 
Given two objects $M, N \in \st{G}$, we have
\[ \hom_{\st{G}}(M, N) \stackrel{\mathrm{def}}{=} \hom_{k[G]}(M, N)/ \sim,  \]
where we take the quotient by the subspace of all $k[G]$-module maps $M \to N$ which factor through  a
projective $k[G]$-module. 
The stable module category $\st{G}$ has a symmetric monoidal structure given
by the $k$-linear
tensor product of representations. 

\begin{definition} 
\label{endotrivdef}
An object $M \in \st{G}$ with $\dim_k M < \infty$ is called \textbf{endotrivial} if its  endomorphism
ring $M \otimes_k M^{\vee}$ is equivalent in  $\st{G}$ to the unit $k$;
equivalently, if $M$ belongs to the Picard group of $\st{G}$. 
\end{definition} 

The unit $k$ is obviously endotrivial. 
Additional examples 
of endotrivial modules arise from the fact that $\st{G}$ is actually a
\emph{triangulated} category, where the suspension of an object $M \in \st{G}$
is obtained by choosing an embedding $M \hookrightarrow M'$ for $M'$ free and
then taking the quotient $M'/M$. 
The triangulated and tensor structures are appropriately compatible, so the
suspensions $\Sigma^i k$ for $i \in \mathbb{Z}$ are also endotrivial. 

\begin{theorem}[{Dade \cite[Th. 10.1]{DadeII}}] 
Suppose $G$ is an abelian $p$-group. Then every endotrivial module is equivalent in $\st{G}$
to $\Sigma^i k$ for some $i \in \mathbb{Z}$. 
\end{theorem}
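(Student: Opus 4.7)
By Definition \ref{endotrivdef}, endotrivial modules are exactly the elements of the Picard group $\pic(\st{G})$, and the assignment $i \mapsto \Sigma^i k$ defines a canonical map $\mathbb{Z} \to \pic(\st{G})$. This map is injective (for instance, by tracking $k$-dimensions modulo $|G|$), so the theorem reduces to showing it is also surjective, i.e.\ that $\pic(\st{G}) = \mathbb{Z}$.

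My strategy is to combine the main structural theorem of the paper -- the identification of $\st{G}$ as a $\mathbb{T}^n$-Galois extension of $\mod(k^{t\mathbb{T}^n})$ in the sense of Rognes -- with Picard-group descent. Following the Mathew--Stojanoska approach, such a Galois extension yields a descent spectral sequence
\[
E_2^{s,t} = H^s\bigl(B\mathbb{T}^n;\, \pi_t\, \pics(\st{G})\bigr) \Longrightarrow \pi_{t-s}\,\pics(\mod(k^{t\mathbb{T}^n})).
\]
The abutment is accessible directly: $k^{t\mathbb{T}^n}$ is an even-periodic ring spectrum with $\pi_* = k[u_1^{\pm 1},\dots,u_n^{\pm 1}]$ (generators in degree $-2$), so $\pic(\mod(k^{t\mathbb{T}^n}))$ is small and explicitly computable. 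Similarly, for $t \geq 1$ the groups $\pi_t\,\pics(\st{G})$ are determined by the graded units of $\mathrm{End}_{\st{G}}(k) = \hat H^*(G;k)$ (Tate cohomology), together with the $\mathbb{T}^n$-action dictated by the Galois construction. The only unknown row is precisely $\pi_0\,\pics(\st{G}) = \pic(\st{G})$, which I plan to pin down by running the sequence as a constraint.

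In detail, I would: (i) directly compute $\pic(\mod(k^{t\mathbb{T}^n}))$ from the graded-ring structure; (ii) identify the higher rows $\pi_{\geq 1}\,\pics(\st{G})$ together with the $\mathbb{T}^n$-action coming from the Galois structure; (iii) use the polynomial cohomology $H^*(B\mathbb{T}^n;\mathbb{Z}) = \mathbb{Z}[t_1,\dots,t_n]$ (with $|t_i|=2$, concentrated in even degrees) to describe the $E_2$-page; (iv) analyze the differentials and multiplicative structure to extract $\pi_0\,\pics(\st{G}) = \mathbb{Z}$, generated by the suspension class of $k$.

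The main obstacle will be the spectral sequence analysis. The delicate point is to show that the higher Tate cohomology classes, together with their transgressions against $B\mathbb{T}^n$-classes, collapse in just the right way so that $\pic(\st{G})$ is forced to be a free abelian group of rank one (rather than gaining extra torsion or additional free summands). This will require careful bookkeeping of the $\mathbb{T}^n$-equivariant structure of Tate cohomology inherited from the Galois action, and exploiting the sparsity of $H^*(B\mathbb{T}^n;-)$ in even degrees; the Galois descent calculation here is formally analogous to the Mathew--Stojanoska computation of $\pic(KO)$ from $\pic(KU)$, but with a connected, positive-dimensional Galois group.
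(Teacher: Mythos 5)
Your overall skeleton---Galois descent along $k^{t\mathbb{T}^n}\to k^{tA}$ plus a computation of the Picard group of the base---is indeed the paper's strategy, but the proposal has two genuine gaps. First, the input to your step (i) is false for $n\geq 2$: $\pi_*k^{t\mathbb{T}^n}$ is \emph{not} $k[u_1^{\pm 1},\dots,u_n^{\pm 1}]$, and $k^{t\mathbb{T}^n}$ is not even periodic. The Tate construction here is the localization of $k^{h\mathbb{T}^n}$ away from the ideal $(x_1,\dots,x_n)$, and for $n\geq 2$ its homotopy consists of the polynomial part together with a shifted local-cohomology (``injective hull'') part in positive degrees, which has infinite homological complexity; the Picard group cannot be read off ``directly from the graded-ring structure.'' The paper instead realizes $k^{t\mathbb{T}^n}$ as $\Gamma(\mathfrak{X},\otop)$ for an even-periodic derived scheme $\mathfrak{X}$ over $\mathbb{P}^{n-1}_k$, invokes $0$-affineness to identify $\md(k^{t\mathbb{T}^n})\simeq \qcoh(\mathfrak{X})$, and computes $\pic$ by Zariski descent using the cohomology of line bundles on projective space (\Cref{picstT}); some such geometric input is needed at this step. (A minor aside: your injectivity remark via dimensions fails for $A=C_2$, where $\Sigma k\simeq k$; the paper treats $C_2$ separately, though injectivity is not needed for the statement.)

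Second, and more seriously, your use of the descent spectral sequence runs in the wrong direction. The homotopy-fixed-point spectral sequence computes $\pi_*\bigl(\pics(\st{A})^{h\mathbb{T}^n}\bigr)\simeq \pi_*\pics(k^{t\mathbb{T}^n})$ \emph{from} $\pi_*\pics(\st{A})$; knowing the abutment and the rows $t\geq 1$ does not determine $E_2^{0,0}=\pic(\st{A})$, because any Picard class that supports a differential (i.e.\ fails to descend) leaves no trace in the abutment, and the possible targets of such differentials are enormous groups built from $(\pi_0 k^{tA})^{\times}\cong k^{\times}$, so the constraint $\pic(k^{t\mathbb{T}^n})=\mathbb{Z}$ cannot exclude them. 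The actual content of the paper's proof is to show that \emph{every} invertible object of $\st{A}$ is a permanent cycle, i.e.\ descends: (a) the first obstruction, the monodromy in $k^{\times}$ attached to each loop in $\mathbb{T}^n$, vanishes because the $\mathbb{T}^n$-action extends to a $BA$-action, forcing the monodromy to be $p$-power torsion in the $p$-torsion-free group $k^{\times}$; and (b) the higher obstructions vanish by the computation of $\pi_{-2}\bigl((\tau_{\geq 1}\gl_1(k^{tA}))^{h\mathbb{T}^n}\bigr)=0$ (\Cref{hfpunits}, feeding into \Cref{descpictool}), which is not a formal consequence of the evenness of $H^*(B\mathbb{T}^n)$ but requires tracking the relation $y^2=\eta y$ in $\pi_*\Sigma^{\infty}_+S^1$, the interaction of $\gl_1$ with $\eta$ and squares in $\pi_1$ (whence the special handling of $p=2$ and $C_2$), and a relative-projectivity degeneration argument. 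Neither (a) nor (b) appears in your plan, and without them the ``spectral sequence as a constraint'' cannot force $\pic(\st{A})$ to be cyclic.
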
 

The Picard group of $\st{G}$ for a general finite $p$-group $G$ is an important
object of study in modular representation theory, and has been completely
calculated in the work of Carlson--Thevenaz \cite{CT2, CT, CT3}. 
One of our goals in this paper is to  give  a homotopy-theoretic proof of
Dade's theorem, using descent theory. 
We note that related methods of tensor-triangulated categories have been used to
study Picard groups of stable module categories 
in \cite{PicTT} and in particular to obtain information about Picard groups after inverting the
characteristic.

\subsection{Structured ring spectra}

\label{subsec:structured}
We now give an overview of our proof of Dade's theorem. 
Suppose we were working not in the stable module category but the ordinary
representation category. Then we could make the following argument to show
that the Picard group is trivial: every
element of the abelian $p$-group $G$ induces multiplication by a scalar in $k^{\times}$ on our
representation. Since $k^{\times}$ is $p$-torsion free, every element of $G$
acts trivially and we are done. 

One can start this argument in the stable module category. However, it is 
now only true that every element of the group acts as the identity \emph{up to
homotopy} (i.e., modulo projectives) on an invertible object. This is insufficient to conclude that the
object is trivial. Nonetheless, we will still be able to carry out a more
sophisticated version of this argument, for which the language of
$\infty$-categories and higher algebra \cite{HTT, HA} will be essential. 
In particular, we will henceforth work with the $\infty$-categorical
enhancement of $\st{G}$ (which by abuse of notation we will denote by the same
symbol).

Our starting point is that $\st{G}$ is equivalent to the $\infty$-category of
module spectra over the \emph{Tate construction} $k^{tG}$, an
$\e{\infty}$-ring spectrum obtained as a suitable localization of the cochain
algebra $k^{hG} \simeq  F(BG_+, k)$.
In particular, the group of stable equivalence classes of endotrivial modules is
isomorphic to  
the Picard group of the $\e{\infty}$-ring spectrum $k^{tG}$. 
Dade's theorem states that the Picard group of $k^{tG}$ is generated by the
suspensions of the unit if $G$ is abelian.

Picard groups of structured ring spectra form a recurring topic in stable
homotopy theory, 
starting with the observation of Hopkins that the Picard groups of the $L_n$
and $K(n)$-localized stable homotopy categories contain significantly more than
suspensions (or algebraically flat objects). 
Given an $\e{\infty}$-ring spectrum $R$, a general theme is that the Picard
group will be easier to understand if the homotopy groups of $R$ are
homologically simple. 
For example, if $\pi_*(R)$ is \emph{regular} and concentrated in even degrees,
the Picard group of $R$ can be determined purely algebraically, cf. 
\cite{BakerRichter}, \cite[Th. 6.4]{HillMeier}. 
A general approach \cite{GL} to the computation of Picard groups of $\e{\infty}$-ring spectra
is to decompose the $\infty$-category of $R$-modules into $\infty$-categories of modules over
$\e{\infty}$-rings with better-behaved homotopy groups, and apply a descent
spectral sequence. We refer to \cite{MS, HMSPic} for examples of this approach.

\subsection{Our approach}

If $G$ is abelian, then the $\e{\infty}$-ring $k^{tG}$ has fairly
complicated homotopy groups, homologically. For instance, if $p$ is odd and $G$
has rank one, then 
\[ \pi_* k^{tG} \simeq E(\alpha_{-1}) \otimes_k k[\beta_{-2}^{\pm 1}],  \]
i.e., the tensor product of an exterior algebra and a Laurent polynomial
algebra, 
and this has infinite homological dimension.
When $G$ has higher rank, the positive homotopy groups of $\pi_*( k^{tG})$
are entirely square-zero. 

A direct approach using descent theory to the Picard group of $k^{tG}$
is problematic because of the presence of these exterior classes;
instead, we will use a sort of reverse descent, following ideas that we used
in \cite[\S 9]{galois} to study the Galois group. 
The main step is the following result. 

\begin{theorem}
If $G $ is an abelian $p$-group of rank $n$, then one can construct an action of the $n$-torus
$\mathbb{T}^n$ on the symmetric monoidal $\infty$-category $\st{G}$ 
such that 
the homotopy fixed points $\st{G}^{h\mathbb{T}^n}$ 
are given by the 
analog of the stable module $\infty$-category for the torus.
\end{theorem}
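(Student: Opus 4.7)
My plan is to build the torus action from a fibration of classifying spaces and then transport it across the equivalence $\st{G} \simeq \mod(k^{tG})$. Writing $G \cong \prod_{i=1}^n \mathbb{Z}/p^{a_i}$, I would embed $G$ as the finite subgroup $\prod_i \mu_{p^{a_i}} \subset \mathbb{T}^n$. The quotient $\mathbb{T}^n/G$ is canonically another $n$-torus, via the degree $|G|$ isogeny, so one has a short exact sequence of topological groups
\[ 1 \to G \to \mathbb{T}^n \to \mathbb{T}^n \to 1, \]
and hence a principal fibration $BG \to B\mathbb{T}^n \to B\mathbb{T}^n$. In particular, the fiber $BG$ acquires a canonical action of $\Omega B\mathbb{T}^n \simeq \mathbb{T}^n$, with homotopy orbits $(BG)_{h\mathbb{T}^n} \simeq B\mathbb{T}^n$. $\infty$-categorically, this is a functor $B\mathbb{T}^n \to \mathcal{S}$ whose value is $BG$.

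Next I would transport this through $X \mapsto \fun(X, \mod_k)$, obtaining a symmetric monoidal $\mathbb{T}^n$-action on $\fun(BG, \mod_k) \simeq \mod(k^{hG})$. By the standard adjunction
\[ \fun(BG, \mod_k)^{h\mathbb{T}^n} \simeq \fun((BG)_{h\mathbb{T}^n}, \mod_k) \simeq \fun(B\mathbb{T}^n, \mod_k) \simeq \mod(k^{h\mathbb{T}^n}), \]
the homotopy fixed points at this level are modules over the cochain algebra of $B\mathbb{T}^n$. I would then descend the action to $\st{G}$: since $\st{G}$ is the smashing localization of $\mod(k^{hG})$ obtained by killing the thick ideal of projectives (equivalently, base change along $k^{hG} \to k^{tG}$), and since this localization is determined intrinsically by the $k$-linear symmetric monoidal structure, the $\mathbb{T}^n$-action preserves it and descends. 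The analogous localization of $\mod(k^{h\mathbb{T}^n})$ defines the ``stable module $\infty$-category for the torus'', namely $\mod(k^{t\mathbb{T}^n})$.

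The main obstacle is verifying that this smashing localization commutes with $(-)^{h\mathbb{T}^n}$, so that one obtains an equivalence $\st{G}^{h\mathbb{T}^n} \simeq \mod(k^{t\mathbb{T}^n})$ rather than only a comparison map. Concretely this requires checking that the Tate class being inverted is $\mathbb{T}^n$-equivariantly accessible (so that localization is a $\mathbb{T}^n$-equivariant operation on the symmetric monoidal $\infty$-category), and that inversion of such a central class commutes with the limit computing $h\mathbb{T}^n$-fixed points. Equivalently, this is the statement that the natural map $(k^{tG})^{h\mathbb{T}^n} \to k^{t\mathbb{T}^n}$ is an equivalence of $\e{\infty}$-rings, which is the Galois-descent content at the heart of the theorem and the point where the hypothesis that $G$ is an abelian $p$-group embedded in the maximal torus is genuinely used.
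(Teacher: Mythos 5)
Your geometric setup coincides with the paper's: the paper picks a surjection $L \twoheadrightarrow G$ from a rank-$n$ lattice with kernel $L'$ and uses the fiber sequence $BL \to BG \to B^2L'$, which is exactly your $\mathbb{T}^n/G \to BG \to B\mathbb{T}^n$ under the identifications $BL \simeq \mathbb{T}^n/G$ and $B^2L' \simeq B\mathbb{T}^n$. The gap is in the categorical transport. The equivalences $\fun(BG, \md(k)) \simeq \md(k^{hG})$ and $\fun(B\mathbb{T}^n, \md(k)) \simeq \md(k^{h\mathbb{T}^n})$ are false: for connected pointed $X$ one has $\fun(X, \md(k)) \simeq \md(C_*(\Omega X; k))$, so your left-hand sides are modules over the group algebra $k[G]$ and over the exterior algebra $C_*(\mathbb{T}^n;k)$, which are Koszul dual to --- but not equivalent to --- modules over the cochain algebras. (Concretely, the unit $k$ is not a perfect $k[G]$-module, hence not compact in $\fun(BG,\md(k))$, whereas it is compact in $\md(k^{hG})$.) The category that is equivalent to $\md(k^{hG})$ for a $p$-group is $\mathrm{Ind}(\fun(BG,\perf(k)))$, and for that category the descent statement you invoke is no longer formal, since $\mathrm{Ind}$ does not commute with the limit over $B\mathbb{T}^n$. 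So you face a dichotomy: formal descent for the wrong category, or the right category with non-formal descent. Resolving this is the real content of the theorem. The paper does it by proving that $k^{hBL'} \to k^{hG}$ is a faithful $BL$-Galois extension: descendability comes from freeness of $\pi_*k^{hG}$ over $\pi_*k^{hBL'}$, and the unramifiedness condition $k^{hG} \otimes_{k^{hBL'}} k^{hG} \simeq F(BL_+, k^{hG})$ comes from convergence of the Eilenberg--Moore spectral sequence for the torsor square $BL \times BG \simeq BG \times_{B^2L'} BG$ (this is where simple connectivity of the base $B^2L'$ enters). Galois descent then gives $\md(k^{hBL'}) \simeq \md(k^{hG})^{hBL}$.

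The second issue, which you correctly flag but do not resolve, is commuting the finite localization with $(-)^{h\mathbb{T}^n}$. Once the Galois framework is in place this is not hard: the Tate constructions on both sides are the finite localizations away from the \emph{same} iterated cofiber $k^{h\mathbb{T}^n}/(x_1,\dots,x_n)$, so one gets a pushout square of $\e{\infty}$-rings with vertices $k^{hBL'}$, $k^{tBL'}$, $k^{hG}$, $k^{tG}$, and faithful Galois extensions are stable under base change; hence $k^{tBL'} \to k^{tG}$ is again faithful $BL$-Galois and $\md(k^{tBL'}) \simeq \md(k^{tG})^{hBL} \simeq \st{G}^{h\mathbb{T}^n}$. (Also note the natural map runs $k^{t\mathbb{T}^n} \to (k^{tG})^{h\mathbb{T}^n}$, not the other way.) As written, though, your argument assumes the key non-formal input rather than supplying it.
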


The above arises from a faithful $\mathbb{T}^n$-Galois extension in the sense
of Rognes \cite{rognes} that runs $k^{t \mathbb{T}^n} \to k^{tG}$. 
The Picard group of the $\e{\infty}$-ring $k^{t \mathbb{T}^n}$ can be calculated directly using descent theory. 
In particular, we can prove the analog of Dade's theorem for 
$\st{G}^{h\mathbb{T}^n}$ relatively easily.
We will show using an obstruction-theoretic calculation  (\Cref{descpictool}) that any
invertible object in
$\st{G}$ can be ``descended'' to the homotopy fixed points $\st{G}^{h
\mathbb{T}^n} \simeq \md( k^{t\mathbb{T}^n})$. Since the Picard group 
of the latter is cyclic, this will complete the proof. This relies on
techniques with descent spectral sequences following \cite{MS}. 

As a result, we will also obtain a partial local version of Dade's theorem. 
Choose an identification $\mathrm{Proj} (H^{\mathrm{even}}(G;
k))_{\mathrm{red}} \simeq \mathbb{P}^{n-1}_k$.
Let
$U \subset \mathbb{P}^{n-1}_k$ be a Zariski open subset. 
We say that a morphism $M \to N$ in $\st{G}$ is an \emph{$U$-equivalence}
if its cofiber has cohomology supported on the complement of $U$.
We can then form an associated Bousfield localization
$(\st{G})_{U}$ where we invert all $U$-equivalences. 
Then we will prove:

\begin{theorem} 
Suppose $U \subset \mathbb{P}^{n-1}_k$ is affine and $p > 2$. 
Let $M \in \st{G}$ be a compact object (i.e., one represented by a
finite-dimensional $k[G]$-representation). Suppose that the $U$-localization
of $M$ is invertible in $(\st{G})_U$. Then the $U$-localization of $M$
is equivalent to the $U$-localization of $\Sigma^i k$ for some $i$.  
\end{theorem}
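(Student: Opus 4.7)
The plan is to run the descent argument used for the global Dade theorem inside the Bousfield localization $(\st{G})_U$. First, I would transport the $\mathbb{T}^n$-action on $\st{G}$ produced by the main theorem to an action on $(\st{G})_U$. The class of $U$-equivalences is $\mathbb{T}^n$-stable, because the action is by symmetric monoidal auto-equivalences and hence preserves the thick $\otimes$-ideal of objects whose cohomology is supported off $U$; alternatively, $\mathbb{T}^n$ is connected and therefore acts trivially on the discrete support variety $\mathrm{Proj}(H^{\mathrm{even}}(G;k))_{\mathrm{red}}$. Since localization along cohomological support is smashing, passage to homotopy fixed points commutes with the localization, and I expect an identification
\[
\bigl((\st{G})_U\bigr)^{h \mathbb{T}^n} \simeq \md(k^{t\mathbb{T}^n})_{U},
\]
arising from the faithful $\mathbb{T}^n$-Galois extension $k^{t\mathbb{T}^n} \to k^{tG}$ of the excerpt, localized at $U$.

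Next I would compute the Picard group of $\md(k^{t\mathbb{T}^n})_{U}$. For $p > 2$, the homotopy $\pi_* k^{t\mathbb{T}^n}$ is a tensor product of an even-degree polynomial/Laurent piece with an exterior algebra on odd-degree classes. Restricting to the affine $U$ amounts to inverting a finite set of homogeneous elements in the even subalgebra, leaving a graded regular ring of finite Krull dimension. The Picard group of the resulting $\e{\infty}$-ring can then be read off by the algebraic and descent-spectral-sequence methods of \cite{BakerRichter, HillMeier, MS}: the spectral sequence converging to $\pi_* \pics$ has a finite vanishing line coming from the finite global dimension of the even part, and the odd exterior classes for $p > 2$ sit in degrees that do not create obstructions. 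The conclusion I expect is that the Picard group is cyclic, generated by the suspension of the unit.

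With these two inputs in hand, the final step is descent. Given a compact $M \in \st{G}$ whose $U$-localization $M_U$ is invertible in $(\st{G})_U$, I would apply \Cref{descpictool} to the $\mathbb{T}^n$-action on $(\st{G})_U$. The obstruction-theoretic statement gives that $M_U$ lifts to an invertible object in $((\st{G})_U)^{h\mathbb{T}^n} \simeq \md(k^{t\mathbb{T}^n})_{U}$, provided the relevant obstruction groups in the Picard descent spectral sequence vanish; affineness of $U$ again secures this vanishing via a finite vanishing line. Combining with the Picard computation, the lift must be equivalent to a suspension of the unit, and pushing forward along $k^{t\mathbb{T}^n} \to k^{tG}$ shows $M_U \simeq (\Sigma^i k)_U$ in $(\st{G})_U$, as required.

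The main obstacle I anticipate is the obstruction-theoretic bookkeeping in the localized setting: ensuring both that the Picard descent spectral sequence for the torus action remains convergent after $U$-localization, and that the obstructions for lifting an individual invertible are killed in every relevant bidegree. The hypotheses that $U$ be affine (to get a finite vanishing line from finite cohomological dimension of the even part) and that $p > 2$ (to keep the odd exterior contributions harmless) are placed precisely to overcome this obstacle, but checking that the global descent machinery of \Cref{descpictool} transports cleanly through a smashing localization, without introducing new convergence pathologies, is the technical heart of the argument.
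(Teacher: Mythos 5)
There is a genuine gap: your argument never uses the hypothesis that $M$ is (the $U$-localization of) a \emph{compact} object of $\st{G}$, and correspondingly it skips the one condition in \Cref{descpictool} that is actually hard to verify in the localized setting. \Cref{descpictool} does not say that an invertible $R'$-module descends whenever suitable obstruction groups vanish; it says it descends \emph{if and only if the monodromy automorphisms} $a\colon \mathcal{L}\to\mathcal{L}$, for $a\in\pi_1(\mathbb{T}^n)$, are the identity (the vanishing statements of \Cref{hfpunits}, guaranteed by the hypotheses $\eta=0$, squares vanish in $\pi_1$ for $p>2$, and freeness of $\pi_*(R')$ over $\pi_*(R)$, only remove the higher obstructions once the monodromy is trivial). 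In the global Dade theorem the monodromy is killed by a torsion argument: it lies in $(\pi_0 k^{tA})^\times = k^\times$, is $p$-power torsion because the $\mathbb{T}^n$-action extends to a $BA$-action, and $k^\times$ has no $p$-torsion. After $U$-localization this breaks down, since $\pi_0(k^{tA}_U)^\times$ can contain nontrivial $p$-power torsion units; this is exactly why the paper can only show in general that $\pic((\st{A})_U)$ is cyclic modulo $p$-power torsion. The paper's proof of the theorem you are addressing uses compactness precisely here: writing $\mathcal{L}$ as the $U$-localization of a finite-dimensional representation $V$, one computes that the trace of $a-1$ on $V$ (hence on $\mathcal{L}$) vanishes, and since an endomorphism of an invertible object is detected by its trace, each generator $a\in A$ acts as the identity on $\mathcal{L}$. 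Only then does \Cref{descpictool} apply, giving descent to $k^{t\mathbb{T}^n}_U$, whose Picard group is algebraic (regularity of the affine $U$) and generated by suspensions. Without some replacement for this trace argument, your proposal would prove the statement for arbitrary invertible objects of $(\st{G})_U$, which is more than is known and not what the hypotheses allow.

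Two smaller points: your description of $\pi_* k^{t\mathbb{T}^n}$ as containing odd exterior classes is incorrect (that is $\pi_* k^{tA}$; the homotopy of $k^{h\mathbb{T}^n}$ is a polynomial algebra on even classes, and the relevant regularity/evenness is what makes the Picard group of $k^{t\mathbb{T}^n}_U$ algebraic for affine $U$), and the role of $p>2$ is not to tame exterior classes in a vanishing line but to ensure hypothesis (2) of \Cref{hfpunits} (squares vanish in $\pi_1$ of the localized Tate construction), which can fail at $p=2$.
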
 

\subsection{Thick and localizing subcategories}

The idea of comparing the $\e{\infty}$-ring $k^{t G}$ (or $k^{hG}$) with an $\e{\infty}$-ring
whose homotopy groups do not have the exterior algebra classes is not new: it
is used prominently in the 
 stratification
\cite{BIK} of localizing subcategories of the stable module category of an
$p$-group. The main result runs as
follows: 

\begin{theorem}[Benson-Iyengar-Krause \cite{BIK}] 
\label{BIKthm}
Let $G$ be a finite $p$-group. 
The localizing subcategories of $\md( k^{hG})$ are in bijection with the
subsets of the set of homogeneous prime ideals in $\pi_*
(k^{hG})$. The localizing subcategories of the stable module category
$\st{G} \simeq \md( k^{tG})$ are in
bijection with subsets that do not contain the irrelevant ideal.
\end{theorem}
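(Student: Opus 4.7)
\emph{Proof plan.} The approach is to give a homotopy-theoretic substitute for a key step in the argument of \cite{BIK}, using the $\mathbb{T}^n$-Galois extension $k^{t\mathbb{T}^n} \to k^{tE}$ of the previous theorem (for $E = (\mathbb{Z}/p)^n$ an elementary abelian $p$-group) to bypass the exterior-algebra classes in $H^*(E;k)$ that complicate the original argument.

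First I would reduce the two halves of the theorem to one. The cofiber sequence $k_{hG} \to k^{hG} \to k^{tG}$ exhibits $\md(k^{tG})$ as the Verdier quotient of $\md(k^{hG})$ by objects supported on the irrelevant ideal (since $k_{hG}$ is such an object for finite $p$-groups $G$), so localizing subcategories of $\md(k^{tG})$ correspond exactly to localizing subcategories of $\md(k^{hG})$ whose support avoids the irrelevant ideal. Second, by Quillen's $F$-isomorphism theorem together with standard Chouinard-style detection of supports on elementary abelian subgroups, the classification for a general finite $p$-group $G$ reduces to the case $G = E = (\mathbb{Z}/p)^n$.

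For such $E$, I would apply the $\mathbb{T}^n$-Galois extension $k^{t\mathbb{T}^n} \to k^{tE}$ (and its pre-Tate cousin). Since $\pi_*(k^{h\mathbb{T}^n}) \simeq k[t_1, \ldots, t_n]$ is polynomial---hence regular Noetherian---the graded $\e{\infty}$-version of the Neeman--Thomason theorem classifies localizing subcategories of $\md(k^{h\mathbb{T}^n})$ by subsets of homogeneous primes. Faithful Galois descent identifies $\md(k^{tE})$ with $\mathbb{T}^n$-equivariant objects in $\md(k^{t\mathbb{T}^n})$, so localizing subcategories of $\md(k^{tE})$ correspond to $\mathbb{T}^n$-stable localizing subcategories on the torus side. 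Because the map on homotopy is an $F$-isomorphism onto the polynomial part of $H^*(E;k) = k[x_1, \ldots, x_n] \otimes \Lambda[y_1, \ldots, y_n]$ and the $\mathbb{T}^n$-action is trivial on $\mathrm{Proj}$, every subset of homogeneous primes is automatically $\mathbb{T}^n$-stable, and the bijection transfers directly to $\md(k^{tE})$. Restoring the irrelevant ideal yields the version for $\md(k^{hE})$.

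The main obstacle will be realizing Galois descent at the level of localizing subcategories rather than thick subcategories of compact objects, where descent is classical. Concretely, one must show that the equivalence $\md(k^{tE})^{h\mathbb{T}^n} \simeq \md(k^{t\mathbb{T}^n})$ induces a bijection between localizing subcategories of $\md(k^{tE})$ and $\mathbb{T}^n$-stable localizing subcategories of $\md(k^{t\mathbb{T}^n})$ compatible with the support-theoretic parametrization. Faithfulness of the Galois extension, combined with compact generation on both sides, should enable bootstrapping from the classical compact-object descent to the setting of arbitrary localizing subcategories, but this compatibility is the crux of the argument.
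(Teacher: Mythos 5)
Your outline locates the right extension $k^{t\mathbb{T}^n}\to k^{tE}$ and the right intuition (connectedness of the torus should make any ``stability'' condition on subcategories vacuous), but the step you yourself flag as the crux is genuinely missing, and the mechanism you propose for it would not work. First, the descent statement is backwards: Galois descent identifies the \emph{base} with the fixed points, $\md(k^{t\mathbb{T}^n})\simeq \md(k^{tE})^{h\mathbb{T}^n}$, not $\md(k^{tE})$ with ``$\mathbb{T}^n$-equivariant objects in $\md(k^{t\mathbb{T}^n})$''; making precise what this equivalence does to \emph{localizing} subcategories is exactly the open point. Second, ``bootstrapping from the classical compact-object descent to arbitrary localizing subcategories'' cannot be the argument: localizing subcategories of $\md(k^{tE})$ or $\md(k^{t\mathbb{T}^n})$ are not determined by their compact objects -- the ones attached to subsets of primes that are not specialization-closed (e.g.\ a single non-closed point) contain no nonzero compact objects at all, and classifying precisely such subcategories is the content of the theorem. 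So no compact-object/thick-subcategory descent statement can be leveraged directly, and nothing in the proposal replaces it.

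The paper closes this gap by a different route that never descends subcategories along the torsor. It proves, for any faithful $G$-Galois extension $R\to R'$ with $G$ a \emph{connected} finite complex, that extension and restriction of scalars induce mutually inverse bijections $\locs(\md(R))\simeq\locs(\md(R'))$, using only descendability: (i) if $R\to R'$ is descendable, then $M$ and $R'\otimes_R M$ generate the same thick $\otimes$-ideal of $\md(R)$, so $\locs(\md(R))\to\locs(\md(R'))$ is injective with section given by restriction of scalars (\Cref{desc:inj}); (ii) for a connected finite complex $X$, the basepoint evaluation $F(X_+,R)\to R$ is descendable, whence $\locs(\md(R))\simeq\locs(\md(F(X_+,R)))$ (\Cref{sectionp}) -- this is where connectedness of $\mathbb{T}^n$ enters and is the precise form of your ``every subset of primes is automatically stable''; (iii) base-changing along a descendable $\widetilde R$ that trivializes the torsor, $R'\otimes_R\widetilde R\simeq F(G_+,\widetilde R)$, a commutative square shows restriction of scalars $\locs(\md(R'))\to\locs(\md(R))$ is injective, hence bijective. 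Combined with the residue-field classification for $k^{h\mathbb{T}^n}$ and $k^{t\mathbb{T}^n}$ (the paper uses Angeltveit-style residue fields rather than your Neeman--Thomason appeal; this difference is harmless), this gives the abelian case. Your remaining reductions -- the Verdier-quotient comparison of $\md(k^{hG})$ with $\md(k^{tG})$ and the Quillen/Chouinard reduction to elementary abelian groups -- are fine at the citation level, and indeed the paper likewise leaves them to Benson--Iyengar--Krause, proving only the abelian (elementary abelian) key step.
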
 

Earlier work of Benson-Carlson-Rickard \cite{BCR} classifies the
\emph{thick} subcategories of the compact objects of $\st{G}$.
The above result has been extended in the work of Stevenson \cite{Stevenson}, for
example for the singularity categories of complete intersection local rings. 

Suppose $G$ is in fact {elementary abelian}, $G \simeq C_p^n$.
Then
Benson-Iyengar-Krause prove \Cref{BIKthm} by replacing $k^{h C_p^n}$
with a different $\e{\infty}$-ring $R$,
given by   
the (derived) ring of functions
on the classifying stack of the $k$-group
\emph{scheme} $ \alpha_p^n$. The underlying $\e{1}$-rings of $R$ and
$k^{hC_p^n}$  are equivalent, so it
suffices to classify localizing subcategories of 
$\md(R)$. 
Now the $\e{\infty}$-ring $R$ receives a map $R' \to R$, where $\pi_*(R')$ 
has only polynomial classes (cf. \cite[\S 7]{BIK}). It is shown 
(cf. \cite[Th. 4.4]{BIK}) 
how to use a classification result for
localizing subcategories of $\md(R')$ to obtain one for $\md(R)$. The
classification of localizing subcategories of $\md(R')$ can be proved  using a technique with
``residue fields'' which goes back to \cite{HPSt}, as $\pi_*(R')$ is
\emph{regular}. 

In this paper, we study a $\mathbb{T}^n$-Galois extension of $\e{\infty}$-rings $k^{h \mathbb{T}^n}
\to k^{h C_p^n}$. This gives
an $\e{\infty}$-approximation to $k^{h C_p^n}$ that only sees the
polynomial classes.  On $\e{1}$-rings, this extension is equivalent to the
one of \cite{BIK}, but it is constructed using topology rather than
graded Hopf algebras. 

We will prove a general result that for a faithful $\mathbb{T}^n$-Galois extension of
$\e{\infty}$-rings $R_1 \to R_2$, the localizing subcategories of $\md(R_1)$ and
$\md(R_2)$ are in canonical bijection. 
As a result, we can reduce the classification of localizing
subcategories of $\md(k^{h C_p^n})$ to the analog in $\md( k^{h
\mathbb{T}^n})$, where one can use the method of residue fields. 
This gives a slightly different approach to some of the technical steps in
\cite{BIK}, which does not require modifying the $\e{\infty}$-structure. 

\subsection{Acknowledgments}
I would like to thank Ben Antieau, John Greenlees, Mike Hill, Mike Hopkins, Srikanth Iyengar, Jacob Lurie, Julia Pevtsova, and
especially Vesna Stojanoska  for helpful discussions. 

\section{Stable module categories and Tate spectra}

\subsection{Construction of the stable module $\infty$-category}

We start by reviewing the construction of the stable module $\infty$-category
of a finite group. 
Although these results are not new, we have spelled out some of the details
for the convenience of the reader. 

\begin{construction}
Let $G$ be a finite group 
and let $k$ be a field of characteristic $p>0$. Consider the group ring $k[G]$
and the symmetric monoidal category 
$\mathcal{C}$ of (discrete) $k[G]$-modules, equipped with the $k$-linear
tensor product. 

The category $\mathcal{C}$ admits a combinatorial model structure \cite[\S 2.2]{hovey}
where: 
\begin{enumerate}
\item The fibrations are the surjections.  
\item The cofibrations are the injections. 
\item The weak equivalences are the {stable equivalences.} Given a
morphism $f\colon V_1 \to V_2$ in $\mathcal{C}$, one says that it is a \emph{stable
equivalence} if 
there exists a morphism $g\colon V_2 \to V_1$ such that the endomorphisms $g
\circ f - \mathrm{id}_{V_1}$
and $f \circ g - \mathrm{id}_{V_2}$, of $V_1, V_2$ respectively, each factor through
a projective $k[G]$-module.
\end{enumerate}
One checks in addition that $\mathcal{C}$ is a symmetric monoidal model category.
\end{construction}

Note that every object in the model category $\mathcal{C}$ is cofibrant-fibrant. It is shown 
in \cite[\S 2.2]{hovey} that the morphisms in \emph{homotopy category}
$\mathrm{Ho}(\mathcal{C})$ are given by the classical stable module category,
i.e., 
\begin{equation} \label{classst}\hom_{\mathrm{Ho}(\mathcal{C})}(V_1, V_2) \simeq \hom_{\mathcal{C}}(V_1,
V_2)/\sim,\end{equation}
where we identify two morphisms if their difference factors through a
projective. 

\begin{definition} 
The stable module $\infty$-category $\st{G}$ is the
$\infty$-categorical localization $\mathcal{C}[\mathcal{W}^{-1}]$ for
$\mathcal{W} \subset \mathcal{C}$ the weak equivalences \cite[Def.
1.3.4.15]{HA}. This inherits the
structure of a symmetric monoidal $\infty$-category \cite[Prop. 4.1.3.4]{HA}. 
\end{definition} 

\newcommand{\perf}{\mathrm{Perf}}
The stable module $\infty$-category also has another description as a
Bousfield localization which
we review. We use  \cite{MNN15} as a reference for some of these ideas in the
context of equivariant stable homotopy theory, but they  are of course much
older (e.g., \cite{HPSt}). 
\begin{construction}
Consider the symmetric monoidal $\infty$-category $\fun(BG, \perf(k))$ of perfect
$k$-module spectra equipped with a $G$-action. We form the
$\mathrm{Ind}$-completion $\mathrm{Ind}(\fun(BG, \perf(k)))$ and, inside here,
the $A^{-1}$-localization (cf. \cite[\S 3]{MNN15}) for $A = F(G_+, k) \in
\mathrm{CAlg}(\fun(BG, \perf(k)))
$, which we denote $L_{A^{-1}}
\mathrm{Ind}(\fun(BG, \perf(k)))$. 
\end{construction}

These constructions have been extensively studied in the literature for any
noetherian ring, not only group rings, starting with the
work of Krause \cite{Krausestable}. 
We refer to the work of Benson-Krause \cite{BK} for a construction of
$\mathrm{Ind}( \fun(BG, \perf(k)))$
via a model category of \emph{complexes of injectives}. 
Finally, these $\infty$-categories have been studied by Gaitsgory in the more general
setting of DG-algebras and DG-schemes \cite{indcoh}.

\begin{theorem} 
There is an equivalence of symmetric monoidal $\infty$-categories
between $\st{G}$ and $L_{A^{-1}} \mathrm{Ind}(\fun(BG, \perf(k)))$. 
\end{theorem}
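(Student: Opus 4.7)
The plan is to identify both sides with the Ind-completion of a common small symmetric monoidal stable $\infty$-category, namely the ``small'' stable module category $\mathrm{stmod}_G := \fun(BG, \perf(k))/\perf(k[G])$, where $\perf(k[G]) \subset \fun(BG, \perf(k))$ denotes the thick subcategory of $G$-equivariant perfect $k$-modules whose underlying $k[G]$-module is projective (equivalently, the thick subcategory generated by the induced-from-trivial objects).

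For the right-hand side, I would apply \cite[\S 3]{MNN15}: the localization $L_{A^{-1}}$ of a presentable stable symmetric monoidal $\infty$-category at an algebra $A$ is the Verdier quotient by the localizing tensor ideal generated by $A$. For $A = F(G_+, k)$ the projection formula gives $A \otimes X \simeq F(G_+, X|_e)$ for every $X$, where $X|_e$ denotes the underlying non-equivariant $k$-module, so the thick tensor ideal generated by $A$ in $\fun(BG, \perf(k))$ is precisely $\perf(k[G])$ (induced-from-trivial objects are exactly the projective $k[G]$-modules for finite $G$). Ind-completing and using that Ind commutes with Verdier quotients by thick tensor ideals, we obtain a symmetric monoidal equivalence
\[
L_{A^{-1}} \mathrm{Ind}(\fun(BG, \perf(k))) \simeq \mathrm{Ind}(\mathrm{stmod}_G).
\]

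For the left-hand side, I would use that the model category of discrete $k[G]$-modules with stable equivalences is combinatorial, stable, and symmetric monoidal, so its $\infty$-categorical localization $\st{G}$ is a presentable stable symmetric monoidal $\infty$-category. The finite-dimensional $k[G]$-modules form a set of compact generators of $\st{G}$, and their full $\infty$-subcategory identifies with $\mathrm{stmod}_G$: at the homotopy category level this is the classical identification \eqref{classst} combined with Verdier's description of the bounded stable module category, and the $\infty$-categorical upgrade is supplied by a simplicial enhancement of Hovey's model structure in which cofibrant-fibrant replacement computes the correct mapping spaces. Hence $\st{G} \simeq \mathrm{Ind}(\mathrm{stmod}_G)$, and combining the two identifications yields the desired equivalence of symmetric monoidal $\infty$-categories.

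The main obstacle is this last step: upgrading the classical hom-set identification \eqref{classst} to a match of mapping spaces between finite-dimensional representations in $\st{G}$ and in the Verdier quotient $\mathrm{stmod}_G$. Equivalently, one must verify that the $\infty$-categorical localization of Hovey's model category really computes the Verdier quotient, not merely its homotopy category. This is a formal consequence of the symmetric monoidal enhancement of Hovey's construction together with the general machinery of presentable $\infty$-categorical localizations \cite{HA}, but it is worth spelling out since the classical references \cite{BK, Krausestable} use somewhat different frameworks.
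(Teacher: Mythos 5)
Your reduction of the right-hand side is fine and is a reasonable repackaging: $A^{-1}$-localization in the sense of \cite[\S 3]{MNN15} is indeed the Bousfield quotient by the localizing tensor ideal generated by $A$, the thick ideal generated by $A = F(G_+,k)$ in $\fun(BG,\perf(k))$ is the subcategory of objects with projective underlying $k[G]$-module, and by the Neeman--Thomason localization theorem the compacts of the quotient are the idempotent completion of $\fun(BG,\perf(k))/\perf(k[G])$, so that $L_{A^{-1}}\mathrm{Ind}(\fun(BG,\perf(k))) \simeq \mathrm{Ind}(\mathrm{stmod}_G)$ (idempotent completion is harmless under $\mathrm{Ind}$). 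The genuine gap is in the left-hand step, and it is exactly the step you flag and then dismiss as ``formal.'' Knowing that the homotopy category of Hovey's model structure is the classical stable module category \eqref{classst}, and knowing that both sides admit enhancements, does not by itself identify the full subcategory of finite-dimensional modules in $\st{G}$ with the Verdier quotient $\mathrm{stmod}_G$: an equivalence of triangulated homotopy categories does not upgrade to an equivalence of stable $\infty$-categories unless you actually produce a comparison functor between the two enhancements, and no enhancement of Hovey's model category alone will hand you a functor landing in a quotient of $\fun(BG,\perf(k))$, which is built from $k$-module spectra rather than discrete modules.

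This is precisely what the paper's proof supplies, and what your outline still needs: send a finitely generated discrete $k[G]$-module to the corresponding object of the heart $\fun(BG,\perf^{\heartsuit}(k)) \subset \fun(BG,\perf(k))$, Ind-extend to a symmetric monoidal cocontinuous functor $\Phi$, and compose with $L_{A^{-1}}$; since projectives are sent to zero, stable equivalences are inverted and the composite factors through $\st{G} = \mathcal{C}[\mathcal{W}^{-1}]$ by the universal property of the localization. With a functor in hand, full faithfulness can indeed be checked on homotopy categories (shifts take care of higher homotopy), but that check is a computation, not a formality: one must show that maps from the unit to a finitely generated $M$ in $L_{A^{-1}}\mathrm{Ind}(\fun(BG,\perf(k)))$ are the Tate cohomology $\hat{H}^0(G;M)$, which the paper does using the explicit cofiber description of the $A^{-1}$-localization and the identification of the relevant colimit with the norm map; this is the same Tate-cohomology matching that underlies the classical Rickard/Buchweitz equivalence you are implicitly invoking. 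Once you insert this functor and this computation, your argument closes and is essentially the paper's proof, phrased on compact objects before Ind-completion rather than after.
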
 
\begin{proof} 
The main point is the construction of the functor in this language. 
Let $\mathcal{C}$ be, as before, the model category of (discrete)
$k[G]$-modules. 
Note that $\mathcal{C} = \mathrm{Ind}(\mathcal{C}')$ for $\mathcal{C}'$ the
category of finitely generated (discrete) $k[G]$-modules. 

Here $\perf(k$) denotes the $\infty$-category of perfect $k$-module
spectra. 
Let $\perf^{\heartsuit}(k) \subset \perf(k)$ be the full subcategory spanned by the
discrete, finite-dimensional $k$-vector spaces.
We have a
functor
\[  \mathcal{C}' \to \fun(BG, \perf^{\heartsuit}(k)) \subset \fun(BG, \perf(k)) \]
which extends by $\mathrm{Ind}$-completion to a symmetric monoidal functor
\[ \Phi \colon \mathcal{C}  \to \mathrm{Ind} (\fun(BG, \perf(k))).\]
Here $\Phi$ commutes with filtered colimits. 
The composite
\[  \mathcal{C}  \xrightarrow{\Phi} \mathrm{Ind} (\fun(BG, \perf(k))) \to 
L_{A^{-1}}\mathrm{Ind} (\fun(BG, \perf(k)))
\]
sends projectives to zero, so it respects weak equivalences and factors through
a symmetric monoidal functor
\[ \varphi \colon \st{G} \simeq \mathcal{C}[\mathcal{W}^{-1}] \to
L_{A^{-1}}\mathrm{Ind} (\fun(BG, \perf(k))). \]
This is the desired functor. 

We claim that the above functor $\varphi $ commutes with homotopy colimits. 
We first show that $\varphi$ is exact. 
Suppose given a cofiber sequence $M' \to M \to M''$ in $\st{G}$.  
Using the model structure on $\mathcal{C}$, 
we can represent this cofiber sequence by a short exact sequence in $\mathcal{C}$, e.g., 
\begin{equation} \label{ses} 0 \to M' \to
M \to M'' \to 0.\end{equation} 
The short exact sequence is a filtered colimit in $\mathcal{C}$ of short exact
sequences 
\begin{equation} \label{sessmall}
0 \to M'_\alpha \to M_\alpha \to M''_\alpha \to 0, \end{equation}
which belong to
$\mathcal{C}'$, i.e., $M_\alpha$ is finitely generated.  
Observe that $\Phi$ sends each of the short exact sequences 
\eqref{sessmall} to a cofiber sequence in $\fun(BG, \perf(k))$. Therefore, $\Phi$ (and therefore
$\varphi$) sends \eqref{ses} to a cofiber sequence too. 
Next, we observe that $\varphi$ respects arbitrary direct sums (since $\Phi$
does). It follows that $\varphi$ is cocontinuous \cite[Prop. 1.4.4.1]{HA}.

To check that $\varphi$ is an equivalence, we 
check that it is fully faithful on the homotopy category. 
Since $\fun(BG, \perf(k))$ is generated as a thick subcategory by $\fun(BG,
\perf^{\heartsuit}(k))$,
essential surjectivity will then be automatic. 

Note that $\st{G}$ is stable and the objects represented by finitely generated
$k[G]$-modules are easily seen to be compact (note that compactness can be
checked at the level of the homotopy category, cf. \cite[Prop.~1.4.4.1]{HA}).  
Using duality, one reduces to showing that if $M \in \mathcal{C}'$, then
the natural map
\begin{equation}
\label{compmap}
\hom_{\mathrm{Ho}(\mathcal{C})}(\mathbf{1}, M) \xrightarrow{\varphi}
\pi_0 \hom_{
L_{A^{-1}} \mathrm{Ind} (\fun(BG,
\perf(k)))}( \mathbf{1}, M).
\end{equation}
is an isomorphism.
However, both are known to be the Tate cohomology $\hat{H}^0(G; M)$:
\begin{enumerate}
\item In $\mathrm{Ho}(\mathcal{C})$, this is now an easy calculation using the description of
the homotopy category \eqref{classst}. 
\item We need to show that in 
$L_{A^{-1}} \mathrm{Ind} (\fun(BG,
\perf(k)))$, maps from the unit to the $A^{-1}$-localization of $M$
are as claimed. This follows from the explicit description of
$A^{-1}$-localization, cf. \cite[\S 3]{MNN15} and the following subsection. 

We can also spell out the details directly. 
The $A^{-1}$-localization of $M$
in $\mathrm{Ind} (\fun(BG,
\perf(k)))$
can be computed as 
the cofiber of a map \begin{equation}\label{A1local} | M \otimes (\mathbb{D} A)^{\otimes \bullet + 1}| \to
M ,
\end{equation} (cf. \cite[\S 3]{MNN15}) for $\mathbb{D}$ denoting duality. Since $\hom_{\mathrm{Ind} (\fun(BG,
\perf(k)))}( \mathbf{1}, \cdot)$ commutes with arbitrary colimits, one now
calculates directly that 
the mapping spectrum
$$\hom_{\mathrm{Ind} (\fun(BG,
\perf(k)))}( \mathbf{1},
| M \otimes (\mathbb{D} A)^{\otimes \bullet + 1}|
)$$
is connective with $\pi_0$ given by the coinvariants $M_G$, and that the map 
\eqref{A1local} induces the norm map on $\pi_0$. 
This implies the desired description of the right-hand-side of \eqref{compmap}.
\end{enumerate}
\end{proof} 
\newcommand{\gsp}{\mathrm{Sp}_G}
\subsection{Connection with Greenlees-May}
In the stable module $\infty$-category $\st{G}$, 
endomorphisms of the unit are given by the \emph{Tate construction.} We review the connection
between $\st{G}$ and the Greenlees-May definition of the Tate
construction \cite{GMTate}. 

Fix a finite group $G$.

\begin{definition}[{\cite{GMTate}}] 
Let $\gsp$ denote the symmetric monoidal $\infty$-category of $G$-spectra. 
Given a $G$-spectrum $X$, the \emph{Tate construction} is
the $G$-spectrum $\widetilde{EG} \wedge F(EG_+, X)$. We will write $X^{tG}$ for the $G$-fixed
points $(\widetilde{EG} \wedge F(EG_+, X))^G$. When the context is
clear, we  will also call this the Tate construction. 
\end{definition} 

Let $A = F(G_+, S^0_G)$; this is a commutative algebra object in $\gsp$. 
As in \cite[\S 6]{MNN15}, we recall that $EG_+$ is the $A$-acyclization (cf.
\cite[\S 3]{MNN15}) of the unit and $\widetilde{EG}$ is the
$A^{-1}$-localization. In particular, $F(EG_+, X)$ is the $A$-completion of 
$X$.

\begin{construction}
Let $\underline{k} \in \mathrm{CAlg}(\gsp)$ denote the Borel-equivariant (or cofree) form of $k$, i.e.,
the $G$-spectrum representing Borel-equivariant cohomology with coefficients
in $k$. 
The symmetric monoidal $\infty$-category
$\perf_{\gsp}(\underline{k})$
of compact $\underline{k}$-modules in $\gsp$ is identified with $\fun(BG,
\perf(k))$. This follows because $\fun(BG,
\perf(k))$ is generated as a thick subcategory by the permutation modules $\{k
\wedge (G/H)_+\}_{H \leq G}$ and
in view of \cite[\S 6.3]{MNN15}. 
In particular, it follows by applying $\mathrm{Ind}$-completion that: 

\begin{theorem} 
We have an equivalence of symmetric monoidal $\infty$-categories:
\begin{equation} \md_{\gsp}(\underline{k}) \simeq\mathrm{Ind}(\fun(BG,
\perf(k))) .
\end{equation}
\end{theorem}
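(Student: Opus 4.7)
The theorem is really a formal consequence of the displayed identification $\perf_{\gsp}(\underline{k}) \simeq \fun(BG, \perf(k))$ that is asserted in the Construction immediately before the statement. The plan is to deduce it by $\mathrm{Ind}$-completing both sides, so the work lies entirely in justifying that $\md_{\gsp}(\underline{k})$ \emph{is} the $\mathrm{Ind}$-completion of its own subcategory of compact objects, and that this subcategory coincides with $\perf_{\gsp}(\underline{k})$.

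First, I would check that $\md_{\gsp}(\underline{k})$ is compactly generated. The set $\{\Sigma^\infty_+ (G/H)\}_{H\leq G}$ of orbit suspension spectra generates $\gsp$ as a localizing subcategory, so after base change the set of free $\underline{k}$-modules $\{\underline{k}\wedge(G/H)_+\}_{H\leq G}$ generates $\md_{\gsp}(\underline{k})$ as a localizing subcategory. Each of these objects is compact because mapping out of them computes (shifts of) $H$-fixed points of a connective ring spectrum's module, which commutes with filtered colimits. By the standard criterion \cite[Prop.~1.4.4.2]{HA}, $\md_{\gsp}(\underline{k})$ is compactly generated and its subcategory of compact objects is the thick subcategory generated by the permutation modules $\{\underline{k}\wedge(G/H)_+\}$. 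By the Construction, this thick subcategory is precisely $\perf_{\gsp}(\underline{k})$.

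Second, I would invoke the symmetric monoidal $\mathrm{Ind}$-completion machinery of \cite[\S 4.8.1]{HA}: for a small symmetric monoidal stable $\infty$-category $\mathcal{D}$ whose tensor product is exact in each variable, $\mathrm{Ind}(\mathcal{D})$ carries a canonical symmetric monoidal structure, and passing from compactly generated presentable symmetric monoidal $\infty$-categories to their compact objects is inverse to $\mathrm{Ind}$. Applied to the equivalence $\perf_{\gsp}(\underline{k})\simeq \fun(BG,\perf(k))$ from the Construction, $\mathrm{Ind}$-completion yields a symmetric monoidal equivalence
\[
\md_{\gsp}(\underline{k}) \simeq \mathrm{Ind}\bigl(\perf_{\gsp}(\underline{k})\bigr) \simeq \mathrm{Ind}\bigl(\fun(BG,\perf(k))\bigr),
\]
as desired.

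The only substantive point, which is the main (mild) obstacle, is identifying the compact objects of $\md_{\gsp}(\underline{k})$ with the thick subcategory generated by the permutation modules, rather than some larger class. This is essentially the content of \cite[\S 6.3]{MNN15} already cited in the Construction, so no new argument is required.
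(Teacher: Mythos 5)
Your proposal is correct and follows essentially the same route as the paper: one identifies the compact objects of $\md_{\gsp}(\underline{k})$ with the thick subcategory generated by the permutation modules $\{\underline{k}\wedge (G/H)_+\}_{H\leq G}$ (which the Construction, via \cite[\S 6.3]{MNN15}, identifies with $\fun(BG,\perf(k))$) and then applies symmetric monoidal $\mathrm{Ind}$-completion. The only nitpick is your parenthetical justification of compactness: the clean statement is that mapping out of $\underline{k}\wedge(G/H)_+$ computes genuine $H$-fixed points, which commute with filtered colimits because $(G/H)_+$ is compact in $\gsp$; connectivity of the ring plays no role.
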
 
\end{construction}

Under this, the two objects that we have denoted $A$ correspond. 
Applying $A^{-1}$-localization, one finds:
\begin{corollary} 
We have an equivalence of symmetric monoidal $\infty$-categories $\st{G} \simeq
\md_{\gsp}(\underline{k} \wedge \widetilde{EG})$.
The endomorphisms of the unit are given by the $\e{\infty}$-algebra $k^{tG} =
(\underline{k} \wedge \widetilde{EG} )^G$.
\end{corollary}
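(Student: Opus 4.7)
The plan is to deduce the corollary from the preceding theorem by applying $A^{-1}$-localization to both sides of the equivalence $\md_{\gsp}(\underline{k}) \simeq \mathrm{Ind}(\fun(BG, \perf(k)))$. We already have two separate identifications: the earlier theorem identifies $\st{G}$ with $L_{A^{-1}} \mathrm{Ind}(\fun(BG, \perf(k)))$, while the just-proved theorem identifies $\mathrm{Ind}(\fun(BG, \perf(k)))$ with $\md_{\gsp}(\underline{k})$. So the task reduces to tracking $A$ through these equivalences and interpreting the localized category on the equivariant side.

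First I would verify the author's already-noted claim that the two objects denoted $A$ (namely $F(G_+, k) \in \mathrm{CAlg}(\fun(BG, \perf(k)))$ on the representation side and $F(G_+, S^0_G)$ smashed with $\underline{k}$ on the equivariant side) correspond under the equivalence $\md_{\gsp}(\underline{k}) \simeq \mathrm{Ind}(\fun(BG, \perf(k)))$. This is essentially by construction: both represent the same permutation module $\underline{k} \wedge G_+$ in the generating set of $\fun(BG, \perf^{\heartsuit}(k))$, and the symmetric monoidal structure identifies their algebra structures. Combining the two equivalences then gives
\[ \st{G} \simeq L_{A^{-1}} \md_{\gsp}(\underline{k}). \]

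Next I would identify the right-hand side with $\md_{\gsp}(\underline{k} \wedge \widetilde{EG})$. Since $\widetilde{EG}$ is by definition the $A^{-1}$-localization of the unit $S^0_G$ in $\gsp$ (as recalled in the recollection of Greenlees--May), smashing with $\underline{k}$ gives the $A^{-1}$-localization of $\underline{k}$ in $\md_{\gsp}(\underline{k})$. The $A^{-1}$-localization of a presentably symmetric monoidal $\infty$-category of modules is modules over the localization of the unit (cf.\ \cite[\S 3]{MNN15}), which yields the desired identification. Chaining these equivalences produces the symmetric monoidal equivalence $\st{G} \simeq \md_{\gsp}(\underline{k} \wedge \widetilde{EG})$.

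Finally, for the endomorphism claim, I would compute the endomorphism ring of the unit in $\md_{\gsp}(\underline{k} \wedge \widetilde{EG})$. The endomorphism $\e{\infty}$-algebra of the unit in a category of modules over an $\e{\infty}$-ring $R$ in $G$-spectra is $R^G$, so here it is $(\underline{k} \wedge \widetilde{EG})^G$, which is by definition $k^{tG}$ in the Greenlees--May sense. The only mild subtlety I anticipate is bookkeeping the symmetric monoidal structure through the localization and making sure $A^{-1}$-localization commutes appropriately with $\mathrm{Ind}$-completion, but this is already built into the framework of \cite[\S 3, \S 6.3]{MNN15} that the paper is invoking.
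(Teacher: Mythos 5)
Your proposal is correct and matches the paper's (very brief) argument: the paper likewise notes that the two objects denoted $A$ correspond under the equivalence $\md_{\gsp}(\underline{k}) \simeq \mathrm{Ind}(\fun(BG,\perf(k)))$ and then simply applies $A^{-1}$-localization, using that $\widetilde{EG}$ is the $A^{-1}$-localization of the unit so that the localized module category is $\md_{\gsp}(\underline{k}\wedge\widetilde{EG})$ with unit endomorphisms $(\underline{k}\wedge\widetilde{EG})^G = k^{tG}$. You have just spelled out the same steps in more detail.
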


We specialize to the case where $G$ is a $p$-group. We find: 

\begin{theorem}[cf. Keller \cite{keller}] 
\label{stmodtate}
If $G$ is a $p$-group, then there is an equivalence 
of symmetric monoidal $\infty$-categories
between $\st{G}$ and $\md(k^{tG})$ for $k^{tG}$ the Tate construction of $G$. 
\end{theorem}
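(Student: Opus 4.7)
The plan is to apply a Schwede--Shipley-style recognition theorem, identifying $\st{G}$ with modules over the endomorphism $\e{\infty}$-ring of its unit. From the corollary $\st{G} \simeq \md_{\gsp}(\underline{k} \wedge \widetilde{EG})$ already established, we know that the endomorphism algebra of the unit is precisely the $\e{\infty}$-ring $k^{tG}$. The content of the theorem is therefore to show that the unit $k \in \st{G}$ is a compact generator, so that the canonical symmetric monoidal functor $\md(k^{tG}) \to \st{G}$ is an equivalence.

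For compactness, the unit is represented by the finite-dimensional $k[G]$-module $k$; under the equivalence with $L_{A^{-1}} \mathrm{Ind}(\fun(BG, \perf(k)))$ it is the image of a compact object, hence itself compact. For generation, here is where the $p$-group hypothesis enters decisively: the group algebra $k[G]$ is a local Artinian ring whose unique simple module (up to isomorphism) is the trivial representation $k$. Consequently, every finite-dimensional $k[G]$-module admits a composition series with all subquotients isomorphic to $k$, and hence belongs to the thick subcategory of $\st{G}$ generated by the unit. Since every object of $\st{G}$ is a filtered colimit of finite-dimensional modules, the unit generates $\st{G}$ as a localizing subcategory.

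With compactness and generation in hand, I would conclude via Lurie's symmetric monoidal recognition theorem for module categories \cite{HA}, applied to the stable presentable symmetric monoidal $\infty$-category $\st{G}$ (in which the tensor product preserves colimits separately in each variable by construction): since the unit is a compact generator, the functor $\hom_{\st{G}}(k, -)$ induces a symmetric monoidal equivalence
\[
\st{G} \xrightarrow{\sim} \md\bigl(\mathrm{End}_{\st{G}}(k)\bigr) \simeq \md(k^{tG}).
\]
The main (though standard) obstacle is the generation statement, which genuinely requires $G$ to be a $p$-group; the rest of the proof is a formal consequence of the preceding corollary together with the recognition theorem. For a non-$p$-group, one would instead have to take the direct sum of the simple modules as the generator, which would give a different (Morita-equivalent) endomorphism ring.
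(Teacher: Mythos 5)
Your proposal is correct and follows essentially the same route as the paper: the crucial input in both is that the unit generates (the paper phrases the $p$-group fact as every nontrivial representation having a nonzero fixed vector, you phrase it via $k[G]$ being local with unique simple $k$ — the same fact), followed by the symmetric monoidal Schwede--Shipley recognition theorem \cite[Prop.~7.1.2.7]{HA}. The only cosmetic difference is that the paper applies the recognition theorem to $\mathrm{Ind}(\fun(BG,\perf(k))) \simeq \md(k^{hG})$ and then passes to the $A^{-1}$-localization, whereas you apply it directly to $\st{G}$ with endomorphism ring $k^{tG}$.
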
 
\begin{proof} 
In this case, $\st{G}$ is
generated as a localizing subcategory by the unit because 
$\fun(BG,
\perf(k))$ is generated as a thick subcategory by the unit. This follows from
the fact that any nontrivial $G$-representation over $k$ has a nontrivial
fixed vector. 

By the symmetric monoidal version of the Schwede-Shipley
theorem
\cite[Prop. 7.1.2.7]{HA}, one obtains  a symmetric monoidal equivalence 
\begin{equation} \mathrm{Ind}(\fun(BG,
\perf(k))) \simeq \md_{\gsp}(\underline{k}) \simeq \md( k^{hG}) , \end{equation} where $k^{hG} \simeq F(BG_+, k)$ is the
$\e{\infty}$-ring of $k$-valued cochains on the classiyfing space $BG$. 
The result follows. 
\end{proof} 

\begin{example} 
\label{tateex}
Suppose $G$ is an abelian $p$-group of rank $n$. In this case, $\pi_* k^{hG} $
contains a   polynomial algebra  $k[x_1, \dots, x_n]$
on classes $x_i$ with $|x_i| = -2$. 

We will need to know that in this case, the Tate construction
$k^{tG}$ can be identified with the localization of $k^{hG}$ away from the
\emph{ideal} $(x_1, \dots, x_n)$ (cf. \cite{GM}). Equivalently, the localizing
subcategory of $\md(k^{hG}) \simeq \mathrm{Ind}( \fun(BG, \perf(k)))$ generated
by the iterated cofiber $k^{hG}/(x_1, \dots, x_n)$ (using the first description of this
$\infty$-category) is equal to that generated by $k \wedge
G_+$ (using the second). 

Although this description is classical, we will describe the argument in the
case where $G$ is \emph{elementary abelian}. 
The $x_i$'s are the
Euler classes of complex line bundles for a linearly independent set of maps
$G \twoheadrightarrow \mu_p \subset S^1$.  
In more detail, given a complex character $\chi$ of $G$, we form the unit
sphere $S(\chi)$, the Euler sequence $S(\chi)_+ \to S^0 \to S^{\chi}$, and the 
induced cofiber sequence in $\fun(BG, \perf(k))$
\[ S(\chi)_+ \wedge k \to k \to \Sigma^2 k.  \]
In $\md(k^{hG})$, we obtain a corresponding cofiber sequence
\[ (S (\chi)_+ \wedge k)^{hG} \to k^{hG}\xrightarrow{e(\chi)} \Sigma^2
k^{hG},  \]
for $e( \chi) \in \pi_{-2}(k^{hG})$ the Euler class of the character. 
From this, it follows that the cofiber of $x_i$ is represented in $\fun(BG,
\perf(k))$ by the smash product of $k$ and a finite $G$-complex with isotropy in a maximal
proper subgroup of $G$. As a result, the smash product 
$k^{hG}/(x_1, \dots, x_n)$
of the cofibers of
each of the $x_i$'s has to belong to the thick subcategory generated by $G_+$.
The converse direction is easier: the thick subcategory generated by any
nonzero $X \in \fun(BG, \perf(k))$ contains $k \wedge G_+$; one sees this from
the projection formula and the fact that $k \wedge G_+$ belongs to the thick
subcategory generated by the unit.  
\end{example} 

Every compact object in the stable module $\infty$-category 
can be represented by a finitely generated $k[G]$-module. A general result that
includes this appears in \cite{compactstable}. 
Any invertible object in $\st{G}$ is compact (as the unit is), and therefore
one can identify the group of stable equivalence classes of
(finite-dimensional) endotrivial modules in the sense of
\Cref{endotrivdef} and the Picard group of the $\e{\infty}$-ring
$k^{tG}$.

\section{Torus actions on $\infty$-categories}
\subsection{Generalities}
We start by reviewing some generalities about group actions. 

\begin{definition} 
Let $\mathcal{V}$ be an $\infty$-category and let $G$ be a topological group.
A \emph{$G$-action} on an object
$x \in \mathcal{V}$ will mean a functor of $\infty$-categories $BG \to
\mathcal{V}$ which maps the basepoint of $BG$ to $x$. We will call $\fun(BG, \mathcal{V})$ the \emph{$\infty$-category
of objects in $\mathcal{V}$ equipped with a $G$-action.} 
Recall that if $\mathcal{V}$ admits limits, the functor
\[ \mathcal{V} \to \fun(BG, \mathcal{V}),  \]
which gives an object of $\mathcal{V}$ the trivial $G$-action, admits a right
adjoint of homotopy fixed points which we will denote $()^{hG}$.

For example, we can
consider a
$G$-action on an $\infty$-category by taking $\mathcal{V}$ to be the $\infty$-category
$\mathrm{Cat}_\infty$ of $\infty$-categories. 
\end{definition} 

\begin{example}
\label{actionson1category}
Let $\mathcal{C}$ be a 1-category. To give an $S^1$-action on $\mathcal{C}$
is equivalent to giving an automorphism of the identity functor of
$\mathcal{C}$.
This follows from considering $S^1 = B \mathbb{Z}$ as a monoidal category
with one object whose endomorphisms are $\mathbb{Z}$: to
give an $S^1$-action on $\mathcal{C}$ is to give a 
multiplication map $B \mathbb{Z} \times \mathcal{C} \to \mathcal{C}$
satisfying natural identities.

Given an $S^1$-action on $\mathcal{C}$, 
the homotopy fixed points $\mathcal{C}^{hS^1}$ are the full subcategory of
$\mathcal{C}$ spanned by those objects on which this automorphism is the
identity. We note that actions of the \emph{monoid} $\mathbb{Z}_{\geq 0}$ were
considered in \cite{Dri} under the name \emph{$\mathbb{Z}_+$-category.}
\end{example}

\begin{example} 
\label{actonstablemodulecat}
Given a finite group $G$ and a central element $g \in G$, we obtain an
$S^1$-action on the category of (discrete) $k[G]$-modules given by multiplying
by $g$. 
\end{example} 

\begin{example} 
Let $R$ be a smooth commutative algebra over a field of characteristic zero and
let $r \in R$ be a nonzerodivisor.  Consider the
$\infty$-category $\perf(R)$ of all perfect $R$-module spectra. 
The subcategory $\perf(R)_{r-\mathrm{tors}} \subset \md(R)$ of all $r$-power
torsion perfect $R$-module spectra has an $S^1$-action, which gives an $r$-power
torsion perfect $R$-module spectrum $M$ the automorphism of multiplication by $1 + r$. 
In this case, we have an equivalence of $\infty$-categories
\[ \left(\perf(R)_{r-\mathrm{tors}}\right)^{hS^1} \simeq \mathrm{Coh}(R/r),  \]
where $\mathrm{Coh}(R/r) \subset \md(R/r)$ is the subcategory of those
$R/r$-module spectra that are perfect as $R$-modules,
by a theorem of Teleman. We refer to \cite[\S 3]{Preygelthesis} for a detailed  treatment.  
\end{example}


\subsection{The action on the stable $\infty$-category}
We now construct the analog of this action on the
stable module $\infty$-category of an abelian $p$-group. 
It is possible to use \Cref{actonstablemodulecat} together with
$\infty$-categorical localization, but it is convenient for our purposes to take a
different approach. 

\begin{construction}
\label{torusact}
Let $A$ be an abelian $p$-group of rank $n$, i.e., $A$ can be minimally
generated by $n$ elements. 
Then we can find a free abelian group $L$ of rank $n$ and a surjection
\( L \twoheadrightarrow A,  \)
with kernel $L' \subset L$.
Since $A$ and $L$ are abelian, we can model $BL$ and $BA$ by topological
abelian groups, so that
\( BL \to BA  \)
is a morphism of topological abelian groups. 
We thus obtain an action of $BL$ on $BA$ via this homomorphism and the
$BA$-action on itself by left translation. 

We have a fiber sequence
of spaces
\[ BL \to BA \to B^2 L',  \]
exhibiting $B^2 L'$ as the homotopy orbits $(BA)_{h BL}$. Note that $BL$ is
a torus $\mathbb{T}^n$ and $B^2 L'$ is $(\mathbb{CP}^\infty)^n$ (noncanonically).
Taking cochains with $k$-valued coefficients, we obtain a morphism of
$\e{\infty}$-rings
\[ F( B^2L'_+, k) \simeq k^{h BL'} \to k^{hA} \simeq F(BA_+, k),  \]
and $BL$ acts on $k^{hA}$ in the $\infty$-category of $\e{\infty}$-$k^{ h BL'}$-algebras such that the natural map 
$k^{h B L'} \to (k^{hA})^{h BL}$ is an equivalence. 
\end{construction}

This action has a Galois property, which enables one to determine the
$\infty$-category of $BL$-homotopy fixed points.
We discuss this below.

\subsection{Rognes's Galois theory and descent}
Let $G$ be a topological group with the homotopy type of  a finite complex. 
We will need the notion of a faithful $G$-Galois extension of
$\e{\infty}$-rings.

\begin{definition}[{Rognes \cite[Def. 4.1.3]{rognes}}] 
\label{galext}
Consider an extension $R \to R'$ of $\e{\infty}$-rings and suppose $G$ acts on
$R'$ in $\e{\infty}$-$R$-algebras. The extension is said to be a
\textbf{faithful $G$-Galois extension} if:
\begin{enumerate}
\item  $ R \to (R')^{hG}$ is an equivalence. 
\item  
The natural map $R' \otimes_R R' \to F(G_+, R')$ is an equivalence. 
\item Given an $R$-module $M$ such that $R' \otimes_R M$ is contractible, $M$
is itself contractible.
\end{enumerate}

\end{definition}

\begin{proposition} 
Suppose $R \to R'$ is 
a morphism of $\e{\infty}$-rings and suppose $R'$ is given a $G$-action in
$\e{\infty}$-$R$-algebras. Then $R \to R'$ is a faithful $G$-Galois extension
if and only if 
there exists an $\e{\infty}$-$R$-algebra $\widetilde{R}$ such that:
\begin{enumerate}
\item  
The
$\e{\infty}$-$\widetilde{R}$-algebra $R' \otimes_R \widetilde{R}$ is
$G$-equivariantly isomorphic to $F(G_+, \widetilde{R})$ (where the latter has a
$G$-action by translation)
\item
$\widetilde{R}$ is descendable
(cf. \cite[\S 3-4]{galois}, \Cref{def:desc} below) as an $\e{\infty}$-$R$-algebra.
\end{enumerate}
In particular, $G$-Galois extensions are stable under base-change. 
\end{proposition}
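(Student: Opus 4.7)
The plan is to prove the biconditional by taking $\widetilde{R} = R'$ in the forward direction and by checking each of the three Galois axioms after base change along $R \to \widetilde{R}$ in the reverse direction; base-change stability will then be immediate from the characterization.

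For the forward implication, take $\widetilde{R} = R'$. Condition (i) of the proposition is then exactly condition (2) of \Cref{galext}. The content of condition (ii) is that a faithful $G$-Galois extension is always descendable, which is proved in \cite[\S 3-4]{galois}; the key input is that after base change to $R'$ itself, the Amitsur cobar complex becomes $F(G^{\bullet+1}_+, R')$ by condition (2), and the finiteness of $G$ together with the faithfulness assumption forces the associated tower to have the requisite nilpotence property.

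For the reverse implication, suppose $\widetilde{R}$ satisfying (i) and (ii) is given. Because $\widetilde{R}$ is descendable, the functor $(-) \otimes_R \widetilde{R}$ is conservative and equivalences of $R$-modules can be tested after base change. Condition (2) of \Cref{galext} is then verified this way: after tensoring with $\widetilde{R}$, the map $R' \otimes_R R' \to F(G_+, R')$ becomes the standard comparison between $F(G_+, \widetilde{R}) \otimes_{\widetilde{R}} F(G_+, \widetilde{R})$ and $F((G \times G)_+, \widetilde{R})$, which is an equivalence by dualizability of $G_+$. Condition (3) follows similarly: if $M \otimes_R R' \simeq 0$, then $M \otimes_R F(G_+, \widetilde{R}) \simeq 0$, and since $\widetilde{R}$ splits off from $F(G_+, \widetilde{R})$ by evaluation at a basepoint, we get $M \otimes_R \widetilde{R} \simeq 0$ and hence $M \simeq 0$ by conservativity.

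The main obstacle is condition (1): the functor $(-) \otimes_R \widetilde{R}$ does not a priori commute with the totalization that computes $(R')^{hG}$. I would handle this by passing to the cobar construction, where $(R')^{hG}$ is the totalization of $\{F(G^{\times n}_+, R')\}_n$. After base change, condition (i) identifies this with the cobar construction of the cofree $G$-$\widetilde{R}$-algebra $F(G_+, \widetilde{R})$, which admits an extra codegeneracy coming from evaluation at a basepoint of $G$. Split cosimplicial objects have totalizations preserved by any functor, so the totalization of the base-changed cobar object is $\widetilde{R}$, yielding the needed equivalence $R \xrightarrow{\simeq} (R')^{hG}$ by descent. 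Base-change stability follows at once: if $R \to S$ is arbitrary, then $S \otimes_R \widetilde{R}$ splits $S \to S \otimes_R R'$ via the $S$-linear extension of (i) and remains descendable under base change.
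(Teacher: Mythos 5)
Your outline matches the paper's proof in both directions: for the forward implication you take $\widetilde{R}=R'$, using that $R'$ is dualizable over $R$ (\cite[Prop.~6.2.1]{rognes}; note $G$ here is a finite \emph{complex}, e.g.\ a torus, not a finite group, but that is exactly the hypothesis those results need) and that faithful plus dualizable gives descendable (\cite[Th.~3.36]{galois}); for the converse you verify the three conditions of \Cref{galext} after base change along the descendable map $R\to\widetilde{R}$. Your treatments of conditions (2) and (3) of \Cref{galext} (the unramifiedness map via dualizability of $G_+$, and faithfulness via the retraction $\widetilde{R}\to F(G_+,\widetilde{R})\to\widetilde{R}$ plus conservativity) are correct and agree in substance with the paper's terse ``can be checked after base-change.''

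The gap is in your verification of condition (1). You correctly identify the obstacle — $(-)\otimes_R\widetilde{R}$ need not commute with the totalization computing $(R')^{hG}$ — but your resolution does not remove it. What you actually establish is $\Tot\bigl(X^\bullet\otimes_R\widetilde{R}\bigr)\simeq\widetilde{R}$ for the cobar object $X^\bullet=F(G^{\times\bullet}_+,R')$, using the splitting; but to conclude ``$R\simeq(R')^{hG}$ by descent,'' i.e.\ by conservativity of $-\otimes_R\widetilde{R}$, you need $\widetilde{R}\to(\Tot X^\bullet)\otimes_R\widetilde{R}$ to be an equivalence, and the comparison $(\Tot X^\bullet)\otimes_R\widetilde{R}\to\Tot(X^\bullet\otimes_R\widetilde{R})$ is precisely the interchange you flagged; since $\widetilde{R}$ is only descendable, not dualizable, there is no a priori reason it is an equivalence, so the step is circular as written. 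The paper's fix is to strengthen the claim to the level of the tower of finite partial totalizations $\{X_n\}$ coming from the simplicial filtration of $BG$: each $X_n$ is a finite limit, so base change does commute with forming this tower, and one shows the cofiber of the map of towers $\{R\}\to\{X_n\}$ is nilpotent. Nilpotence of such a tower can be tested after base change along a descendable morphism, because a composite of (descendability-index many) maps, each of which becomes null after $\otimes_R\widetilde{R}$, is already null over $R$ (cf.\ \cite[\S 3-4]{galois}, \cite[Sec.~3.1]{Mthick}); after base change the augmented cosimplicial object becomes the split one $\widetilde{R}\to F(G^{\times\bullet}_+,\widetilde{R})$, whose cofiber tower is nilpotent, and pro-triviality of the cofiber tower then yields $R\xrightarrow{\simeq}(R')^{hG}$. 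Inserting this tower-level argument (or some other justification of the interchange) closes the gap; your base-change stability remark is then fine, as in the paper.
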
 
\begin{proof} 
Suppose there exists $\widetilde{R}$ as above, so that we need to show that $R
\to R'$ is a faithful $G$-Galois extension. We first need to check that  $ R \to (R')^{hG}$ is an equivalence. 
We claim more strongly that if $\left\{X_n\right\}$ is the standard tower of
$R$-module spectra converging  to $(R')^{hG}$ (based on the simplicial
filtration of $BG$) then the 
cofiber of the natural map of towers 
$\left\{R\right\} \to \left\{X_n\right\}$ is nilpotent (cf. \cite[Sec.
3.1]{Mthick}).
To check this, we may make a base-change along the descendable morphism $R \to
\widetilde{R}$. 
In fact, descent along $R \to \widetilde{R}$ allows to reduce to the case $R' \simeq
F(G_{+}, R)$ to begin with, and in this case the cosimplicial object computing
$(R')^{hG}$
is split augmented over $R$. 

Next, the natural map $\widetilde{R} \otimes_R \widetilde{R} \to
F(G_+, \widetilde{R})$ is an equivalence as that can be checked after
base-change along $R \to R'$. The final condition (of \emph{faithfulness})  in \Cref{galext} can similarly be
checked by descent.

Conversely, if $R \to R'$ is a faithful $G$-Galois extension, then $R'$ is a dualizable $R$-module
\cite[Prop. 6.2.1]{rognes} and the third condition of faithfulness then implies that $R'$
is itself descendable \cite[Th. 3.36]{galois} so that the conditions 
 above are satisfied with $\widetilde{R} = R'$. 
\end{proof}

We now state the Galois descent theorem. 
For $G$ finite, this also appears in \cite{meier, GL}.
\begin{proposition}[{cf. \cite[\S 9]{galois}}]
\label{galdesc}
Given a faithful $G$-Galois extension $R \to R'$, one obtains a $G$-action
on the symmetric monoidal $\infty$-category $\md(R')$ of $R'$-modules,
and the
natural functor
\[ \md(R) \to \md(R')^{hG}  \]
is an equivalence of symmetric monoidal $\infty$-categories.
\end{proposition}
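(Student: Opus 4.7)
The plan is to reduce the equivalence to the standard descent statement for a descendable $\e{\infty}$-algebra, and then to use the Galois condition to recognize the resulting descent cosimplicial object as the cobar construction that computes $G$-homotopy fixed points.

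First I would observe that the faithful Galois extension $R \to R'$ is descendable: by \cite[Prop. 6.2.1]{rognes} the module $R'$ is dualizable over $R$, and faithfulness then upgrades dualizability to descendability by \cite[Th. 3.36]{galois}. The descent theorem for module $\infty$-categories (as used in \cite[\S 3]{galois}) therefore gives a symmetric monoidal equivalence
\[
\md(R) \;\simeq\; \Tot\bigl(\md(R'^{\otimes_R \bullet + 1})\bigr),
\]
where the cosimplicial object is the one associated to the augmented cosimplicial $\e{\infty}$-algebra $R \to R'^{\otimes_R \bullet+1}$ by applying $\md(-)$ levelwise.

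Next I would construct the $G$-action on $\md(R')$ simply by applying the symmetric monoidal functor $\md(-)\colon \einf_R \to \mathrm{Cat}_\infty$ to the $G$-action on $R'$ in $\e{\infty}$-$R$-algebras. With this action in place, the standard cobar cosimplicial object computing $\md(R')^{hG}$ has $n$-th term $\md(R')^{\times G^n} \simeq \md(F(G_+^n, R'))$, with coface/codegeneracy maps induced from the simplicial structure of the bar construction on $G$. The key step is then to identify this cobar diagram with the descent diagram from Step 1. This uses the Galois condition: iterating $R' \otimes_R R' \simeq F(G_+, R')$ $G$-equivariantly yields $R'^{\otimes_R n+1} \simeq F(G^n_+, R')$, and the usual combinatorial check (identical to the one that identifies the $\check{\mathrm{C}}$ech nerve of a $G$-torsor with its Borel construction) matches the cosimplicial structure maps on the two sides. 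Applying $\md(-)$ levelwise yields an equivalence of cosimplicial symmetric monoidal $\infty$-categories.

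Combining these identifications gives
\[
\md(R) \;\simeq\; \Tot\bigl(\md(R'^{\otimes_R \bullet + 1})\bigr) \;\simeq\; \Tot\bigl(\md(F(G_+^\bullet, R'))\bigr) \;\simeq\; \md(R')^{hG},
\]
as symmetric monoidal $\infty$-categories, and one checks that the composite equivalence is the natural one sending $M \mapsto R' \otimes_R M$ with its canonical descent/equivariant datum.

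The main obstacle is the middle identification: matching the cosimplicial structure of the bar construction on $G$ (whose codegeneracies come from the unit of $G$ and whose cofaces come from multiplication) with the cosimplicial structure coming from iterated tensor products of $R'$ over $R$ (with cofaces from the unit $R \to R'$ and codegeneracies from the multiplication of $R'$). Making this match $G$-equivariantly, and not merely levelwise, is the substantive verification; once done, the symmetric monoidal upgrade is automatic because all functors involved are symmetric monoidal. For $G$ finite this verification is carried out in \cite{meier, GL}, and the argument there extends verbatim to the compact topological group case via the descendability reduction above.
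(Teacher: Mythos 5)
The gap is in your middle identification, and it is not the combinatorial matching of coface maps that you flag as the "main obstacle" --- it is the levelwise claim itself. For a topological group of positive dimension (and the case this paper needs is $G = \mathbb{T}^n$), the cobar construction computing $\md(R')^{hG}$, obtained by decomposing $BG$ as the geometric realization of the \v{C}ech nerve of $\ast \to BG$, has $m$-th term the \emph{cotensor} of $\md(R')$ by the space $G^{\times m}$, i.e.\ the local-system category $\fun(G^{\times m}, \md(R'))$, not $\md(F(G^{\times m}_+, R'))$. These agree when $G$ is finite (a finite product of copies of $\md(R')$), which is why the Meier/Gepner--Lawson argument works levelwise there, but they differ for a torus: already for $m=1$ and $G=S^1$, $\fun(S^1, \md(R'))$ is the $\infty$-category of $R'$-modules equipped with a self-equivalence (modules over the group algebra of $\mathbb{Z}$), whereas $\md(F(S^1_+,R'))$ is modules over $R' \oplus \Sigma^{-1}R'$; the comparison functor is a completion-type construction, kills local systems with monodromy $\lambda$ such that $\lambda - 1$ is invertible, and is not an equivalence. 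So the two cosimplicial $\infty$-categories are not levelwise equivalent, and the assertion that the finite-group verification "extends verbatim" is exactly where the proof breaks; note also that your argument would otherwise prove the descent statement using faithfulness only on the Amitsur side, which is a warning sign.

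What is true --- and what the actual proof (deferred in this paper to \cite[\S 9]{galois}) must establish --- is that the natural map from the descent diagram $\md(R'^{\otimes_R \bullet+1}) \simeq \md(F(G^{\times \bullet}_+, R'))$ to the fixed-point cobar diagram $\fun(G^{\times \bullet}, \md(R'))$ induces an equivalence after totalization, and this is the substantive content rather than a formality. A workable route is the one suggested by your first step used differently: since $R \to \widetilde{R}$ (e.g.\ $\widetilde{R}=R'$) is descendable, write $\md(R')$ with its $G$-action as $\varprojlim_{\Delta} \md(R' \otimes_R \widetilde{R}^{\otimes_R \bullet+1})$, commute the $BG$-limit with the $\Delta$-limit, and reduce to the split extension $S \to F(G_+,S)$; but even the split case $\md(F(G_+,S))^{hG} \simeq \md(S)$ is not formal for positive-dimensional $G$ and requires an affineness/Eilenberg--Moore-type argument (compare the use of Eilenberg--Moore convergence in \Cref{thismapisgalois}), for instance by identifying $\md(F(G_+,S))^{hG}$ with modules over the coinduced algebra in $\fun(BG,\sp)$ and invoking the projection formula and Barr--Beck. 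Your step 1 (dualizability via \cite[Prop.\ 6.2.1]{rognes} plus faithfulness giving descendability via \cite[Th.\ 3.36]{galois}) and the construction of the $G$-action are fine; the missing ingredient is the fixed-point side of the comparison.
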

We return to our setup. 
\begin{proposition}[{cf. \cite[\S 9]{galois}}] 
\label{thismapisgalois}
Notation as in \Cref{torusact}, the natural map $k^{h BL'} \to k^{hA}$ exhibits the
target as a faithful $BL$-Galois extension of the source. 
\end{proposition}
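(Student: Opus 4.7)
The plan is to apply the preceding characterization of faithful Galois extensions with $\widetilde{R} = k^{hA}$ itself. This reduces the task to producing a $BL$-equivariant equivalence
\[
k^{hA} \otimes_{k^{hBL'}} k^{hA} \simeq F(BL_+, k^{hA})
\]
and establishing that $k^{hBL'} \to k^{hA}$ is descendable as a map of $\e{\infty}$-rings.

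For the equivariant equivalence, I would use that $B^2L' \simeq (\mathbb{CP}^\infty)^n$ is simply connected, so that the cochain functor $F((-)_+, k)$ converts the pullback square with corners $BA \times_{B^2L'} BA$, $BA$, $BA$, $B^2L'$ into a pushout of $\e{\infty}$-$k$-algebras (the $\e{\infty}$-refinement of the Eilenberg--Moore convergence theorem). The fiber sequence $BL \to BA \to B^2L'$ realizes $BA \to B^2L'$ as a principal $BL$-bundle, which trivializes upon pullback along itself: $BA \times_{B^2L'} BA \simeq BA \times BL$ as $BL$-spaces, with $BL$ acting by translation only on the second factor. Taking cochains and invoking the K\"unneth isomorphism (valid since $F(BL_+, k) = C^*(\mathbb{T}^n; k)$ is dualizable over $k$) then yields the desired identification with $F(BL_+, k^{hA})$.

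For descendability, it suffices by \cite[Th.~3.36]{galois} to verify that $k^{hA}$ is a dualizable and faithful $k^{hBL'}$-module. Writing $A \simeq \prod_{i=1}^n C_{p^{a_i}}$, one has $\pi_* k^{hBL'} = k[y_1, \dots, y_n]$ with $|y_i| = -2$, coming from the polynomial generators of $H^*((\mathbb{CP}^\infty)^n; k)$. A K\"unneth decomposition gives $\pi_* k^{hA} = \bigotimes_i \pi_* k^{hC_{p^{a_i}}}$, and a direct inspection of the standard computation of $H^*(BC_{p^a}; k)$ shows that each tensor factor is a finite free module of rank $2$ over $k[y_i]$. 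Hence $\pi_* k^{hA}$ is finite free of rank $2^n$ over $\pi_* k^{hBL'}$, which yields simultaneously perfection (hence dualizability) and faithfulness.

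The principal obstacle is the first step, namely upgrading the classical graded Eilenberg--Moore equivalence to an equivalence of $\e{\infty}$-algebras that carries the correct $BL$-action. This is standard within the $\infty$-categorical setup of \cite{HA}, once one observes that $F((-)_+, k)$ is a symmetric monoidal functor from spaces to $\e{\infty}$-$k$-algebras which converts pullbacks over a simply connected base into relative tensor products; granting this, the descendability calculation is mechanical.
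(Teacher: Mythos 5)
Your proposal is correct and follows essentially the same route as the paper: verify the criterion of the preceding proposition with $\widetilde{R} = k^{hA}$ itself, obtaining the $BL$-equivariant trivialization $k^{hA} \otimes_{k^{hBL'}} k^{hA} \simeq F(BL_+, k^{hA})$ from the torsor square $BA \times_{B^2L'} BA \simeq BL \times BA$ together with Eilenberg--Moore convergence over the simply connected base $B^2L'$, and deducing descendability from the fact that $\pi_* k^{hA}$ is finite free over $\pi_* k^{hBL'}$. The only cosmetic difference is that you route descendability through dualizability plus faithfulness (\cite[Th.~3.36]{galois}), whereas the paper observes directly that freeness as a module already gives it.
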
 
\begin{proof} 
In fact, one checks by an explicit calculation on homotopy groups that the map $k^{hBL'} \to k^{hA}$
makes the target into a free module (with shifts) over the source, e.g.,
$F(BC_{p+}, k) = k^{hC_p}$ is free over $F(BS^1, k) \simeq k^{hS^1}$. In
particular, it is descendable.
Moreover, we have a fiber square of spaces
\[ \xymatrix{
 BL \times BA \ar[r]\ar[d]  &  BA \ar[d] &  \\
 BA \ar[r] &  B^2 L'
},\]
i.e., $BA \to B^2 L'$ is a $BL$-torsor. 
By the convergence of the Eilenberg-Moore spectral sequence (cf. \cite[\S
1.1]{DAG13}), we have an
equivalence of $\e{\infty}$-$k^{hA}$-algebras,
\[ F(BL_+, k^{hA}) \simeq F( (BL \times BA)_+, k) \simeq k^{hA}
\otimes_{k^{hBL'}} k^{hA}, \]
which is equivariant for the $BL$-action, by translation on the
left-hand-side and on the second factor on the right-hand-side. 
This implies the claim. 
\end{proof}

As a result of \Cref{galdesc} and \Cref{thismapisgalois}, the canonical functor
\[ \md( F( (\mathbb{CP}^\infty)^n_+, k)) \simeq \md( k^{h BL'}) \to  \md( k^{hA})^{h BL} \]
is an equivalence of symmetric monoidal $\infty$-categories. Moreover, this
holds after arbitrary base-change along the source.

We will now extend this to the stable module $\infty$-category.
\begin{definition} 
Let $\mathbb{T}^n$ be the standard $n$-torus. 
Recall that $\pi_* k^{h \mathbb{T}^n} \simeq k[x_1, \dots, x_n]$ where, for
each $i$, 
$|x_i| = -2$.
We will let $k^{t \mathbb{T}^n}$ denote the localization of $
k^{h \mathbb{T}^n} = F(B\mathbb{T}^n_+, k)$ away from the iterated cofiber $F(B\mathbb{T}^n_+,
k)/(x_1, \dots, x_n)$ (cf. \cite{GM}), or equivalently the
$A^{-1}$-localization in $\md( k^{h \mathbb{T}^n})$ for $A = k^{h
\mathbb{T}^n}/(x_1 , \dots, x_n) \in \mathrm{Alg}( \md( k^{h \mathbb{T}^n}))$
(cf. \cite[\S 3]{MNN15}). 
If $M$ is a finitely generated free $\mathbb{Z}$-module, we will write $k^{t
BM}$ for the above construction where we identify $BM$ with an $n$-torus (in
any manner). 
\end{definition} 

We have
a pushout square of $\e{\infty}$-rings
\[  \xymatrix{
k^{h BL'} \ar[d] \ar[r] & k^{t BL'} \ar[d] \\
k^{h A} \ar[r] &  k^{t A}
},\]
because both the Tate constructions are obtained by the equivalent finite
localization (cf. \Cref{tateex}).

\begin{construction}
\label{galoistate1}
As a localization of $k^{hA}$, the Tate construction $k^{t A}$ inherits a
$BL $-action, and $k^{t BL'} \to k^{tA}$ is a
faithful $BL$-Galois extension. 
The action of $BL$ on the $\e{\infty}$-ring $k^{tA}$
induces a $BL$-action  on the symmetric monoidal $\infty$-category
$\st{A} \simeq \md( k^{tA})$. Note also that the $BL$-action extends to a
$BA$-action (which originates with the $BA$-action on itself by left
translation).
\end{construction}

We summarize the above analysis in the following. 

\begin{theorem}[{cf. \cite[\S 9]{galois}}] 
\label{basicdescresult}
Let $A$ be an abelian $p$-group of rank $n$, and choose a short exact sequence
$0 \to L' \to L \to A \to 0$ where $L \simeq \mathbb{Z}^n$.
Then one obtains an action of the torus $BL \simeq \mathbb{T}^n$ 
on the symmetric monoidal, $k$-linear $\infty$-category $\st{A}$ such that:
\begin{enumerate}
\item The homotopy fixed points $\st{A}^{h BL}$ are canonically identified with the
symmetric monoidal $\infty$-category $\md( k^{t
BL'})$.
\item The action of $BL$ extends to an action of $BA$ on $\st{A}$.  
\end{enumerate}
\end{theorem}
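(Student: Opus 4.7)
The plan is to assemble Theorem \ref{basicdescresult} as a consolidation of Propositions \ref{thismapisgalois} and \ref{galdesc} together with Theorem \ref{stmodtate}, using that Galois extensions are preserved under base change. The substantive topological input (the Galois property of $k^{hBL'} \to k^{hA}$) has already been established, so the remaining work is formal.

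First I would construct the $BL$-action. By Proposition \ref{thismapisgalois}, the map $k^{hBL'} \to k^{hA}$ is a faithful $BL$-Galois extension. The pushout square of Construction \ref{galoistate1} identifies $k^{tA} \simeq k^{tBL'} \otimes_{k^{hBL'}} k^{hA}$, and since faithful $G$-Galois extensions are stable under base change (the proposition following Definition \ref{galext}), we conclude that $k^{tBL'} \to k^{tA}$ is itself a faithful $BL$-Galois extension. In particular, $k^{tA}$ carries a canonical $BL$-action in $\e{\infty}$-$k^{tBL'}$-algebras. Combining with the equivalence $\st{A} \simeq \md(k^{tA})$ from Theorem \ref{stmodtate}, we obtain a $BL$-action on $\st{A}$ as a symmetric monoidal, $k$-linear $\infty$-category by functoriality of $\md(-)$.

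For part (1), I would apply Galois descent. Proposition \ref{galdesc} applied to the $BL$-Galois extension $k^{tBL'} \to k^{tA}$ yields a symmetric monoidal equivalence $\md(k^{tBL'}) \xrightarrow{\sim} \md(k^{tA})^{hBL} \simeq \st{A}^{hBL}$, which is the claimed identification of homotopy fixed points.

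For part (2), I would promote the $BL$-action to a $BA$-action by observing that the $BL$-action on $k^{hA}$ in Construction \ref{torusact} is obtained by restricting the translation action of $BA$ on itself along the surjection $BL \twoheadrightarrow BA$. Since $BA$ already acts on $F(BA_+,k) = k^{hA}$ in $\e{\infty}$-rings by translation, and since the Tate localization of Example \ref{tateex} depends only on the polynomial generators in degree $-2$ and is therefore preserved by any self-equivalence of $k^{hA}$ covering a self-map of $BA$, the $BA$-action descends to $k^{tA}$ and in turn to a $BA$-action on $\st{A} \simeq \md(k^{tA})$ whose restriction along $BL \to BA$ recovers the $BL$-action.

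The main (minor) obstacle is checking the coherence in part (2), namely verifying that the $BA$-action on $k^{hA}$ in $\e{\infty}$-$k^{hBL'}$-algebras really does pass through the $A^{-1}$-localization to yield a $BA$-action on $k^{tA}$; this follows because translation acts trivially on the relevant cofibers $k^{hA}/(x_1,\dots,x_n)$ up to equivalence (the $x_i$ come from characters that are translation-invariant), so the localizing subcategory used to define $k^{tA}$ is $BA$-stable.
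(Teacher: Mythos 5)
Your proposal is correct and takes essentially the same route as the paper: there the theorem is presented as a summary of \Cref{torusact}, \Cref{thismapisgalois}, the pushout square identifying $k^{tA}$ as a base change of $k^{h BL'} \to k^{t BL'}$, Galois descent (\Cref{galdesc}), \Cref{stmodtate}, and \Cref{galoistate1}. Your closing check that the $BA$-translation action preserves the Tate localization is a point the paper leaves implicit, and your justification of it is sound.
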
 

\section{Torus actions on spectra}

In this section, we will describe some tools for working with 
torus actions on spectra. 
We note that most of the subtleties encountered here disappear after $2$ is
inverted. 
\subsection{$\e{\infty}$-rings and $\mathfrak{gl}_1$}

Let $R$ be an $\e{\infty}$-ring. Recall the spectrum of
\emph{units} $\mathfrak{gl}_1(R)$ \cite{MQRT, ABGHR1}. 
We have a natural equivalence of connected spaces
\[ (\Omega^\infty R)_{\geq 1} \simeq (\Omega^\infty \mathfrak{gl}_1(R))_{\geq 1}   \]
and in particular 
natural isomorphisms of homotopy groups $$\psi \colon \pi_i(R) \simeq \pi_i(\mathfrak{gl}_1(R))$$
for $i > 0$.
However, these isomorphisms are generally not compatible with the $\pi_*(S^0)$-module
structure.

\begin{proposition} 
\label{multbyhopfone}
Let $n \in \{ 1, 3, 7\}$, so that there exists an element $\delta_n \in \pi_n(S^0)$
of Hopf invariant one. 
Then if $R$ is any $\e{\infty}$-ring and if $x \in \pi_n(R)$, we
have
\begin{equation} 
\label{psidelta}
\delta_n \psi(x)  =  \psi(\delta_n x + x^2) \in \pi_{2n}( \gl_1(R)).
\end{equation} 
\end{proposition}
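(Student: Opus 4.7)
The plan is to compute both sides of the equation at the space level, exploiting the description of $\delta_n$ as a Hopf construction of the H-space multiplication on $S^n$.

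First, I would recall the space-level realization of $\psi$: under the equivalence $(\Omega^\infty R)_{\geq 1} \simeq (\Omega^\infty \gl_1(R))_{\geq 1}$ given by the basepoint shift $x \mapsto 1+x$, the class $\psi(x) \in \pi_n(\gl_1 R)$ is represented by the based map $f := 1 + x : S^n \to GL_1(R)$. I would also recall that for $n \in \{1, 3, 7\}$, the sphere $S^n$ admits a unital H-space multiplication $\mu_{S^n} : S^n \times S^n \to S^n$ whose Hopf construction $H(\mu_{S^n}) : S^{2n+1} \to S^{n+1}$ stably represents $\delta_n$.

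Next, by naturality of the Hopf construction applied to $f$, the class $\delta_n \cdot \psi(x) \in \pi_{2n}(\gl_1 R)$ is represented, after one suspension, by
\[
\Sigma f \circ H(\mu_{S^n}) \;\simeq\; H(f \circ \mu_{S^n}) : S^{2n+1} \to \Sigma GL_1(R),
\]
where $f \circ \mu_{S^n}$ is the based map $(s_1, s_2) \mapsto 1 + x(s_1 \cdot s_2)$. I would then compare this to the pointwise multiplicative product $f \cdot f : S^n \times S^n \to GL_1(R)$, $(s_1, s_2) \mapsto (1 + x(s_1))(1 + x(s_2))$, using the ring structure of $R$ to expand this as $1 + x(s_1) + x(s_2) + x(s_1) x(s_2)$. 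The two maps $f \cdot f$ and $f \circ \mu_{S^n}$ agree on $S^n \vee S^n$ (both restricting to $f$ on each factor, by unitality of $\mu_{S^n}$ and the H-space identity), so their ``ratio'' $h = (f\cdot f)^{-1} \cdot (f \circ \mu_{S^n})$ factors through $S^n \wedge S^n \simeq S^{2n}$, defining a class in $\pi_{2n}(\gl_1 R)$.

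Finally, I would use the stable splitting $\Sigma(S^n \times S^n) \simeq \Sigma S^n \vee \Sigma S^n \vee \Sigma^{2n+1}$ to identify the two contributions:  the bilinear cross-term $(s_1, s_2) \mapsto x(s_1) x(s_2)$ from $f \cdot f$ factors through $S^{2n}$ as the external product and gives $\psi(x^2)$; the ``comparison'' factor $h$, once reduced to a map $S^{2n} \to GL_1(R)$, captures (by the defining property of $\delta_n$ as the Hopf construction of $\mu_{S^n}$) the class $\psi(\delta_n x)$. Summing yields $\delta_n \cdot \psi(x) = \psi(\delta_n x + x^2)$.

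The main obstacle is a careful handling of the decomposition of the Hopf construction in the last two steps: one must verify, with correct signs, that the quadratic cross-term contributes $+\psi(x^2)$ and that the reduced ``comparison'' piece contributes exactly $\psi(\delta_n x)$ without spurious correction terms. This requires tracking how the ring-level expansion $(1+u)^{-1} \equiv 1 - u \pmod{u^2}$ interacts with the stable splitting of $\Sigma(S^n \times S^n)$, and verifying that the relevant space-level identification of the Hopf construction is compatible with the spectrum-level action of $\delta_n$ on $\pi_n(\gl_1 R)$.
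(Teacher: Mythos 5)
Your route is genuinely different from the paper's. The paper never touches the space level: by naturality it reduces to the universal case where $R$ is the free $\e{\infty}$-ring on $S^n$, notes that $\pi_{2n}$ of that ring forces a universal formula $\delta_n\psi(x)=\psi(a_1x+a_2+a_3x^2)$, and then pins down the coefficients on three test rings (the augmentation to $S^0$ gives $a_2=0$; the square-zero extension $S^0\vee S^n$, where $\gl_1$ splits, gives $a_1=\delta_n$; and the truncation of the free $\mathbb{F}_2$-algebra with $\pi_*=\mathbb{F}_2[x]/(x^3)$, whose $\gl_1$ has $k$-invariant $\mathrm{Sq}^{n+1}$, gives $a_3\neq 0$). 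Your direct Hopf-construction computation avoids the $k$-invariant input, and its first steps are sound: $\delta_n\psi(x)$ is indeed represented by $H(f\circ\mu)$ pushed into $\Omega^{\infty-1}\gl_1(R)$, and the comparison class $h$, trivial on the axes, is the right object to isolate. (Two side remarks: you prove the statement for the specific Hopf-invariant-one element realized as $H(\mu)$, which suffices for the paper's application where only $n=1$, $\delta_1=\eta$ is used; and the ``ratio'' $(f\cdot f)^{-1}\cdot(f\circ\mu)$ should be handled at the level of homotopy classes, i.e.\ as units of the ring $R^0((S^n\times S^n)_+)$, rather than pointwise in $GL_1(R)$; the factored class through $S^{2n}$ is well defined because the top cell of $S^n\times S^n$ is attached by a Whitehead product, which dies after mapping to an infinite loop space.)

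The genuine issue is the final accounting, which is exactly where the $x^2$ has to appear. The class of $H(F)$, $F=f\circ\mu$, in $\pi_{2n}\gl_1(R)$ is the top-cell component of the stable adjoint of $F$ taken with respect to the \emph{multiplicative} ($\gl_1$) infinite-loop structure. With respect to that structure the pointwise product $f\cdot f$ contributes \emph{nothing}: it is a product of classes pulled back from the two factors, so its top-cell component vanishes. Hence $\delta_n\psi(x)$ is carried entirely by $h$, and $[h]$ is not $\psi(\delta_n x)$: writing $u_i=p_i^*x$, using $u_i^2=0$ and $\mu^*x=u_1+u_2+q^*(\delta_n x)$ (this is where the defining property of $H(\mu)$ enters), one computes in $R^0((S^n\times S^n)_+)$ that $(1+u_1+u_2+u_1u_2)^{-1}(1+\mu^*x)-1=q^*(\delta_n x-x^2)$, so $[h]=\psi(\delta_n x-x^2)$. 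Thus the quadratic term materializes from the additive expansion of the multiplicative ratio, not, as you propose, from the cross-term of $f\cdot f$; attempting to verify your stated split (``$f\cdot f$ gives $\psi(x^2)$, $h$ gives exactly $\psi(\delta_n x)$'') mixes the additive splitting of $\Sigma^\infty(S^n\times S^n)$ with a class measured in $\gl_1$ and will not close up. With the corrected bookkeeping your argument does yield the proposition, the sign being harmless since $n$ is odd and hence $2x^2=0$, so $\delta_n x-x^2=\delta_n x+x^2$.
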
 
\begin{proof} 
By naturality, it suffices to prove this in the  case where $R$ is the free
$\e{\infty}$-ring on $S^n$, so that 
\[ R \simeq S^0 \vee S^n \vee (S^n \wedge S^n)_{h\Sigma_2} \vee \dots,  \]
and $x \in \pi_n(R)$ is the tautological element. 
In this case, $\pi_{2n}(R) \simeq \pi_{2n}(S^0) \oplus \pi_{2n}(S^n) \oplus
\mathbb{Z}/2$.
It follows that we have a 
``universal''
formula
\[ \delta_n\psi(  x) =  \psi(a_1 x + a_2 + a_3 x^2), \]
where $a_1 \in \pi_n(S^0), a_2 \in \pi_{2n}(S^0), a_3 \in \mathbb{Z}/2$. 
It remains to determine the values of these coefficients  by considering
several specific examples. 

First, we claim that $a_2 = 0$. This follows because we have a map of
$\e{\infty}$-rings $R \to S^0$
that annihilates $x$, so that $\delta_n \psi(  x)$ must belong to the kernel of
the induced map. 
Second, we claim that $a_1 = \delta_n$; this follows by considering the case of
the square-zero $\e{\infty}$-ring $S^0 \vee S^n$, so that we have a canonical
equivalence of \emph{spectra} $\mathfrak{gl}_1(S^0 \vee S^n) \simeq
\mathfrak{gl}_1(S^0) \vee S^n$ by \cite[Prop. 6.5.3]{MS}.

Finally, we claim that $a_3 \neq 0$. 
For this, we consider the $\e{\infty}$-ring $R = \tau_{\leq 2n}\mathbb{F}_2\{x_n\}$
obtained by truncating the free $\e{\infty}$-algebra over
$\mathbb{F}_2$  on a degree $n$ class to degrees $[0, 2n]$, so that $\pi_*(R)
\simeq \mathbb{F}_2[x_n]/(x_n^3)$. 
It follows \cite[Prop. 5.2.2]{MS} that the $k$-invariant of $\mathfrak{gl}_1(R)$ is 
the map $H \mathbb{F}_2[n] \to H \mathbb{F}_2[2n+1]$, given by
$\mathrm{Sq}^{n+1}$. 
However, this implies that the classes in degree $n$ and $2n$ of
$\pi_*\mathfrak{gl}_1(R)$ are connected by a $\delta_n$, as desired. 
\end{proof} 
\subsection{The group algebra}
Let $X$ be a spectrum with  an $S^1$-action. Equivalently, $X$ is a module
spectrum over the topological group ring $\Sigma^\infty_+ S^1$. 
Since $S^1$ is a commutative topological group, the group ring  
$\Sigma^\infty_+ S^1$ has the structure of an $\e{\infty}$-ring, and we obtain an action of
$\pi_*( \Sigma^\infty_+ S^1)$ on $\pi_*(X)$. 
We will need the following description of the former, which is well-known to experts. 
\begin{proposition} 
The homotopy groups of the ring spectrum $\Sigma^\infty_+ S^1$ are given by
the algebra
$\pi_*(S^0)[y]/(y^2 + y \eta)$ where $|y| = 1$.
More generally, we have an equivalence of algebras
\begin{equation} \label{pitorusalg}\pi_*( \Sigma^\infty_+ \mathbb{T}^n) \simeq \pi_*(S^0) [y_1,\dots,
y_n]/(y_i^2 = \eta y_i), \quad |y_i| = 1.  \end{equation}
\end{proposition}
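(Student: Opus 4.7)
The additive structure is straightforward from the stable splitting $\Sigma^\infty_+ S^1 \simeq S^0 \vee \Sigma^\infty S^1$, which gives $\pi_*(\Sigma^\infty_+ S^1) \simeq \pi_*(S^0) \oplus \pi_{*-1}(S^0)\{y\}$ as a $\pi_*(S^0)$-module, where $y \in \pi_1$ is the tautological class from the $S^1$ summand. The content of the proposition is therefore the single multiplicative relation $y^2 = -\eta y$, which (since $2\eta = 0$) is the same as $y^2 + \eta y = 0$.

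To establish this relation, the plan is to combine \thref{multbyhopfone} with the fact that $S^1$ is a topological abelian group. Applied with $n = 1$, $\delta_1 = \eta$, $R = \Sigma^\infty_+ S^1$, and $x = y$, that proposition yields the identity $\eta \psi(y) = \psi(\eta y + y^2)$ in $\pi_2(\gl_1(\Sigma^\infty_+ S^1))$, so it suffices to show $\eta \psi(y) = 0$. Because $S^1$ is a commutative topological group, the unit map $S^1 \to \Omega^\infty \Sigma^\infty_+ S^1$ is a map of grouplike $\e{\infty}$-spaces whose image lies in the unit component, and hence factors through the units $\Omega^\infty \gl_1(\Sigma^\infty_+ S^1)$. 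Under the equivalence between grouplike $\e{\infty}$-spaces and connective spectra, the identification $S^1 \simeq K(\mathbb{Z},1) \simeq \Omega^\infty(H\mathbb{Z}[1])$ promotes this to a spectrum map $H\mathbb{Z}[1] \to \gl_1(\Sigma^\infty_+ S^1)$ sending the fundamental class to $\psi(y)$. Since $\pi_2(H\mathbb{Z}[1]) = 0$, naturality of the $\pi_*(S^0)$-action forces $\eta \psi(y) = 0$; combined with \thref{multbyhopfone}, this gives $y^2 + \eta y = 0$.

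For the general torus $\mathbb{T}^n$, the symmetric monoidality of $\Sigma^\infty_+$ on pointed spaces with disjoint basepoint provides an equivalence of $\e{\infty}$-rings $\Sigma^\infty_+ \mathbb{T}^n \simeq (\Sigma^\infty_+ S^1)^{\wedge n}$. Applying the argument above to each coordinate factor $S^1 \hookrightarrow \mathbb{T}^n$ establishes the relations $y_i^2 = \eta y_i$, while graded-commutativity of the homotopy of an $\e{\infty}$-ring automatically imposes the expected anticommutation among distinct odd-degree classes $y_i$. Combined with the additive splitting of the $n$-fold smash product into $2^n$ wedge summands indexed by subsets $I \subseteq \{1, \ldots, n\}$, this yields the claimed presentation \eqref{pitorusalg}. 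The most delicate point in the proof will be confirming that the unit map $S^1 \to \Omega^\infty \Sigma^\infty_+ S^1$ genuinely lifts to a map of grouplike $\e{\infty}$-spaces (equivalently, to a spectrum-level map out of $H\mathbb{Z}[1]$); this rests on $\Sigma^\infty_+$ being a symmetric monoidal left adjoint from spaces (with cartesian product) to spectra.
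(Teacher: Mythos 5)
Your proof is correct and takes essentially the same route as the paper: the relation is deduced from \thref{multbyhopfone} together with the vanishing $\eta\psi(y)=0$, which you obtain from a spectrum map $H\mathbb{Z}[1]\to\gl_1(\Sigma^\infty_+ S^1)$ hitting $\psi(y)$ --- your grouplike $\e{\infty}$-space phrasing is exactly the universal property of $\gl_1$ (adjoint to the identity of $\Sigma^\infty_+ S^1$) that the paper invokes. The only cosmetic difference is that you spell out the reduction of the torus case to $n=1$ via the smash decomposition $\Sigma^\infty_+\mathbb{T}^n\simeq(\Sigma^\infty_+ S^1)^{\wedge n}$, which the paper leaves implicit.
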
 
\begin{proof} 
As a spectrum, $\Sigma^\infty_+ S^1 \simeq S^0 \vee S^1$, so it suffices to
determine the multiplicative structure. 
We consider the map of $\e{\infty}$-ring spectra
$ \Sigma^\infty_+ S^1 \to S^0  $
obtained from the map of groups $S^1 \to 1$. Let $y \in \pi_1(
\Sigma^\infty_+ S^1)$ denote a generator of the kernel of $\pi_1(
\Sigma^\infty_+ S^1) \to \pi_1( S^0)$. 

Using the universal property of $\gl_1$ \cite[Th. 5.2]{ABGHR1}, we see that 
there exists a map
\[ H\mathbb{Z}[1] \to \mathfrak{gl}_1( \Sigma^\infty_+ S^1),  \]
whose image in homotopy is generated by $\psi( y)$. 
This map is adjoint to the identity map
\[ \Sigma^\infty _+\Omega^\infty (H \mathbb{Z}[1]) \simeq \Sigma^\infty_+ S^1
\to \Sigma^\infty_+ S^1. \]
In particular, we find that $\eta \psi(y) =0 $ in $\pi_{2}( \gl_1(
\Sigma^\infty_+ S^1))$. 
It follows from \eqref{psidelta} that
\[ \psi( \eta y) = \psi( y^2),  \]
so that $y^2 = \eta y$ as $\psi$ is an isomorphism.
\end{proof}

\begin{remark} 
We can also see this as follows. 
The ring structure on $\Sigma^\infty_+ S^1$ comes from the multiplication map
$m \colon S^1 \times S^1 \to S^1$. On unreduced suspensions, we obtain a map
\[ \Sigma^\infty_+ m \colon \Sigma^\infty_+ (S^1 \times S^1) \simeq S^0 \vee S^1
\vee S^1 \vee S^2 \to \Sigma^\infty_+ S^1 \simeq S^0 \vee S^1.  \]
However, under this identification
the stable map $S^2 \to S^1$ that  one obtains is the stabilization of the unstable Hopf
fibration $S^3 \to S^2$. This determines the multiplicative structure on
$\pi_*( \Sigma^\infty_+ S^1)$ as
claimed. 
We are grateful to Mike Hill for this remark. 
\end{remark}

\begin{remark} 
\label{multbyy1}
Let $X$ be a spectrum with an $S^1$-action.
We can describe multiplication by $y$ in the following manner on $\pi_0$. 
Consider the $H$-space $\Omega^\infty X$ and let $\ast \to \Omega^\infty X$ be a map of
\emph{unpointed} spaces corresponding to an element in $\pi_0 X$. The
$S^1$-action on $\Omega^\infty X$ then extends this to an unpointed map $S^1 \to \Omega^\infty X$, which
we can (using the $H$-space structure on $\Omega^\infty X$) turn into a \emph{pointed} map
$S^1 \to \Omega^\infty X$, i.e., a class in $\pi_1(X)$. This gives the map 
\[ \pi_0(X) \to \pi_1(X)  \]
of multiplication by $y$. 

It follows from this that if $X$ is a spectrum with $S^1$-action, the
multiplication by $y$ on $\pi_0$ is determined by the $S^1$-action on
$\Omega^\infty X$ together with its $H$-space structure. 
As a result, the multiplication by $y$ on $\pi_i, i > 0$ 
is determined by the action on $\Omega^\infty X$ to begin with. 
\end{remark}

\subsection{The spectral sequence}
In this subsection, we review the spectral sequence for taking homotopy fixed
points of $\mathbb{T}^n$-actions.
This spectral sequence
is essentially the one described in \cite[\S 1.4]{ptypical} (where it starts
from $E_1$ instead).

\begin{construction}
We can make $S^0$ into a $\Sigma^\infty_+ \mathbb{T}^n$-module,
or a spectrum with $\mathbb{T}^n$-action, by making $\mathbb{T}^n$ act
trivially.
Let $X$ be any spectrum with $\mathbb{T}^n$-action. 
Then we have
an equivalence of spectra
\[ X^{h\mathbb{T}^n} \simeq \hom_{\Sigma^\infty_+ \mathbb{T}^n}(S^0, X),  \]
and, thanks to the calculation \eqref{pitorusalg},  we obtain a spectral sequence \cite[Ch. IV, Th. 4.1]{EKMM}
\begin{equation}\label{S1ss} E_2^{s,t} = \mathrm{Ext}^{s,t}_{\pi_*(S^0)[y_1,
\dots, y_n]/(y_i^2 + y_i \eta)}( \pi_*(S^0), \pi_*(X))
\implies \pi_{t-s} (X^{h\mathbb{T}^n}).  \end{equation}
Here the module structure of $\pi_*(S^0)$  is such that each $y_i$ acts trivially.
The differentials run $d_r \colon E_r^{s,t} \to E_r^{s +
r , t + r-1}$.
\end{construction}

\begin{construction}
Suppose $\eta$ acts by zero on $\pi_*(X)$. 
In this case, by using natural $\mathrm{Ext}$ adjunctions in both variables, we can rewrite the
$E_2$ page 
of \eqref{S1ss}
as 
\begin{equation}\label{newE2ss} E_2^{s,t} \simeq
\mathrm{Ext}_{(\pi_*(S^0)/\eta)[y_1, \dots, y_n]/(y_i^2)}( 
\pi_*(S^0)/\eta, \pi_*(X))
\simeq \mathrm{Ext}_{\mathbb{Z}[y_1, \dots, y_n]/(y_i^2)}( \mathbb{Z}, \pi_*(X)).
\end{equation}
Here we use the inclusion $\mathbb{Z}[y_1, \dots, y_n]/(y_i^2) \subset \pi_*(\Sigma^\infty_+
\mathbb{T}^n)/\eta$ and we make the $y_i$ act by zero on $\mathbb{Z}$. 

\end{construction}

We will need a condition that ensures that this spectral sequence degenerates
at $E_2$. 
Let $A = \bigotimes_{i=1}^n \mathbb{Z}[y_i]/y_i^2$. 
\begin{proposition} 
\label{noext}
If $M$ is an abelian group, consider $M \otimes_{\mathbb{Z}} A$ as an
$A$-module by extending scalars. Then we have
\[ \mathrm{Ext}^s_A(  \mathbb{Z}, M \otimes_{\mathbb{Z}} A)  = 
0 , \quad s > 0. 
\]
\end{proposition}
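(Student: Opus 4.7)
The plan is to exploit the fact that $A = \bigotimes_{i=1}^n \Z[y_i]/y_i^2$ is a Frobenius algebra over $\Z$, so that the induced $A$-module $M \otimes_{\Z} A$ agrees with the coinduced $A$-module $\hom_{\Z}(A, M)$ (up to a harmless degree shift in the graded setting). Once this identification is in hand, the claim reduces immediately to the coinduction/restriction adjunction together with the fact that $\Z$ is a projective $\Z$-module.

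Concretely, for the single factor $\Z[y]/y^2$, the $\Z$-linear trace $a + by \mapsto b$ provides a $\Z[y]/y^2$-module isomorphism $\Z[y]/y^2 \simeq \hom_{\Z}(\Z[y]/y^2, \Z)$ (in the graded setting this is an isomorphism after a shift of $|y|$; this shift will be harmless below). Taking $n$-fold tensor products over $\Z$, and using that each factor is finite free over $\Z$, I obtain an $A$-module isomorphism $A \simeq \hom_{\Z}(A, \Z)$, and hence
$$ M \otimes_{\Z} A \;\simeq\; M \otimes_{\Z} \hom_{\Z}(A, \Z) \;\simeq\; \hom_{\Z}(A, M), $$
where the last identification uses again that $A$ is a finitely generated free $\Z$-module.

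Finally, I would choose a resolution $P_\bullet \to \Z$ of the trivial $A$-module $\Z$ by free $A$-modules. Since $A$ is free over $\Z$, each $P_i$ is also free as a $\Z$-module, so $P_\bullet \to \Z$ is at the same time a free $\Z$-resolution of $\Z$. The restriction/coinduction adjunction $\hom_A(N, \hom_{\Z}(A, M)) \simeq \hom_{\Z}(N, M)$ applied termwise yields
$$ \hom_A(P_\bullet, M \otimes_{\Z} A) \;\simeq\; \hom_{\Z}(P_\bullet, M), $$
whose cohomology is $\mathrm{Ext}^*_{\Z}(\Z, M)$, and this vanishes in positive degrees because $\Z$ is projective over itself. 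The only step requiring any care is the Frobenius identification and the attendant degree shift in the graded setting, but the shift is irrelevant for the desired Ext vanishing.
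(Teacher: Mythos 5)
Your argument is correct and is essentially the paper's own proof: the paper likewise rests on the $A$-module isomorphism $M \otimes_{\mathbb{Z}} A \simeq \hom_{\mathbb{Z}}(A, M)$ followed by the restriction/coinduction adjunction, and you have merely made explicit the Frobenius (trace-form) self-duality of $A$ over $\mathbb{Z}$ and the resolution-level bookkeeping that the paper leaves implicit. The remark that the graded shift is irrelevant for the vanishing statement is also accurate.
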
 
\begin{proof}
This follows because there is an isomorphism of $A$-modules
\[ M \otimes_{\mathbb{Z}} A \simeq \hom_{\mathbb{Z}}(A, M),  \]
and now one can use $\mathrm{Ext}$ adjunctions to obtain
\[ \mathrm{Ext}^s_A( \mathbb{Z}, M \otimes_{\mathbb{Z}} A ) 
\simeq \mathrm{Ext}^s_A( \mathbb{Z}, \hom_{\mathbb{Z}}(A, M)) \simeq
\mathrm{Ext}^s_{\mathbb{Z}}(\mathbb{Z}, M) = 0 , \quad s> 0.
\]
\end{proof} 

\begin{definition} 
Let $X$ be a spectrum with a $\mathbb{T}^n$-action. Suppose $\eta$ acts by zero
on $\pi_*(X)$. 
We will say that $\pi_*(X)$ is \textbf{relatively projective} if, as a module
over $A = \mathbb{Z}[y_1, y_2, \dots, y_n]/(y_i^2) \subset \pi_*(\Sigma^\infty_+
\mathbb{T}^n)/\eta$, it is  a retract of a  direct sum of modules that are
obtained from 
$\mathbb{Z}$-modules by extending scalars.
\end{definition}

\begin{example} 
\label{coinducedisfree}
Let $Y$ be a spectrum such that $\eta$ acts by zero on $\pi_*(Y)$. 
Then we can form the coinduced spectrum $X = F( \mathbb{T}^n_+, Y)$, which inherits
a $\mathbb{T}^n$-action from the first factor. The homotopy groups are 
relatively projective. In fact, as spectra with $\mathbb{T}^n$-actions, we
have an equivalence $X \simeq \Sigma^{-n} \Sigma^\infty_+
\mathbb{T}^n \wedge Y$ by Atiyah duality; it is clear that the latter has
relatively projective homotopy groups. 
\end{example}

\begin{corollary} 
\label{relativefreehfp}
Suppose $X$ is a spectrum with $\mathbb{T}^n$-action. Suppose that $\eta$ acts
trivially on $\pi_*(X)$ and that $\pi_*(X)$ is relatively projective. Then the map 
$\pi_*(X^{h\mathbb{T}^n}) \to \pi_*(X)$ is an injection whose image is given by
$\bigcap_{i=1}^n
\mathrm{ker}(y_i)$.
\end{corollary}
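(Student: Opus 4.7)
The plan is to feed the hypotheses directly into the spectral sequence \eqref{S1ss} and show that it collapses with only the $s=0$ column surviving.

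First, since $\eta$ acts by zero on $\pi_*(X)$, we may replace the $E_2$ page by the reformulation \eqref{newE2ss}:
\[ E_2^{s,t} \simeq \mathrm{Ext}^s_A(\mathbb{Z}, \pi_*(X)), \qquad A = \mathbb{Z}[y_1,\dots,y_n]/(y_i^2), \]
with each $y_i$ acting trivially on $\mathbb{Z}$. Next, by hypothesis $\pi_*(X)$ is a retract of a direct sum $\bigoplus_\alpha M_\alpha \otimes_\mathbb{Z} A$ of induced $A$-modules. Since $\mathrm{Ext}^s_A(\mathbb{Z},-)$ commutes with direct sums (as $\mathbb{Z}$ is compact over the noetherian ring $A$) and with retracts, \thref{noext} yields $E_2^{s,t}=0$ for $s>0$.

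Thus the spectral sequence is concentrated in the single column $s=0$, so it collapses with no extension problems, and
\[ \pi_t(X^{h\mathbb{T}^n}) \;\cong\; E_2^{0,t} \;\cong\; \hom_A(\mathbb{Z}, \pi_*(X))_t \;=\; \bigl(\textstyle\bigcap_{i=1}^n \ker(y_i)\bigr)_t, \]
where the last equality is the elementary observation that an $A$-linear map $\mathbb{Z}\to\pi_*(X)$ is the same as a choice of element on which every $y_i$ acts as zero.

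It remains to identify this isomorphism with the map induced by the forgetful functor $X^{h\mathbb{T}^n}\to X$. The forgetful map is computed in the spectral sequence as the edge homomorphism, which is induced by composing an $A$-linear map $\mathbb{Z}\to\pi_*(X)$ with the augmentation $A\twoheadrightarrow\mathbb{Z}$; concretely, it simply sends an element of $\bigcap_i\ker(y_i)$ to its underlying element of $\pi_*(X)$. So the forgetful map on homotopy is precisely the inclusion $\bigcap_{i=1}^n\ker(y_i)\hookrightarrow \pi_*(X)$, as claimed.

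The only non-routine point is the identification of the edge homomorphism with the forgetful map; if one wishes to avoid invoking the general theory of such spectral sequences, one can instead build the spectral sequence from the Koszul resolution of $\mathbb{Z}$ over $A$ and check directly that applying $\hom_{\Sigma^\infty_+\mathbb{T}^n}(-,X)$ to the augmentation $A\to\mathbb{Z}$ recovers the map $X^{h\mathbb{T}^n}\to X$.
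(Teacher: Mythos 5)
Your proof is correct and takes essentially the same approach as the paper, whose entire argument is to cite \thref{noext} and observe that the spectral sequence \eqref{newE2ss} degenerates at $E_2$ with the desired outcome. The details you supply (passing Ext through direct sums and retracts, and identifying the edge homomorphism with the forgetful map) are precisely what the paper leaves implicit.
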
 
\begin{proof} 
By \Cref{noext} above,  one sees
that the spectral sequence degenerates after $E_2$ with the desired outcome. 
\end{proof} 

We now consider a few different examples of the spectral sequence.

\begin{example} 
\label{discreteex}
Let $X$ be a discrete spectrum (i.e., $\pi_i X  = 0$ if $i \neq 0$) with a
$\mathbb{T}^n$-action, necessarily trivial. 
Then one finds that the above spectral sequence is given additively at $E_2$ by $\pi_0 X
\otimes \mathbb{Z}[\xi_1, \dots, \xi_n]$ where each $\xi_i$ is in bidegree $(1, -1)$.  
The spectral sequence collapses at $E_2$ and one obtains the cohomology of
product of copies of a
projective space. 
\end{example}

\begin{corollary} 
\label{evendegrees}
Suppose $X$ is a spectrum with a $\mathbb{T}^n$-action
and suppose that $\pi_*(X)$ is concentrated in even degrees. 
Then $\pi_*(X^{h\mathbb{T}^n})$ is concentrated in even degrees.
\end{corollary}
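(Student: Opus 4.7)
The plan is to apply the homotopy fixed-point spectral sequence \eqref{S1ss} and show that its $E_2$-page is concentrated in bidegrees $(s,t)$ with $t-s$ even; by convergence this will force $\pi_k(X^{h\mathbb{T}^n}) = 0$ for all odd $k$.

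Since $\eta \in \pi_1(S^0)$ has odd degree and $\pi_*(X)$ lives in even degrees, multiplication by $\eta$ must be zero on $\pi_*(X)$. The rewriting \eqref{newE2ss} therefore applies and identifies $E_2^{s,t}$ with the bigraded Ext group $\mathrm{Ext}^{s,t}_A(\mathbb{Z}, \pi_*(X))$, where $A = \mathbb{Z}[y_1,\ldots,y_n]/(y_i^2)$ with each $y_i$ placed in internal degree $1$.

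To compute this Ext, I would use the Koszul resolution $P_\bullet \twoheadrightarrow \mathbb{Z}$, where $P_s = A \otimes \mathrm{Sym}^s(V)$ with $V = \bigoplus_i \mathbb{Z}\xi_i$ and each $\xi_i$ placed in internal degree $1$, so that the differential $\xi_i \mapsto y_i$ preserves internal grading. Because $\mathrm{Sym}^s(V)$ is then concentrated in internal degree $s$, a degree-$t$ $A$-linear map $P_s \to \pi_*(X)$ takes values in $\pi_{s+t}(X)$, which vanishes unless $s+t$---equivalently $t-s$---is even. Thus the entire cochain complex in the forbidden parity is zero, and so is $E_2^{s,t}$ there.

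The argument is essentially just careful bookkeeping of bigradings. The one subtlety that requires care is the internal-degree assignment of the Koszul generators $\xi_i$: placing them in internal degree $1$ (so that the Koszul differential is internal-degree preserving) is precisely what matches the bidegree $(1,-1)$ appearance of $\xi_i$ already recorded in Example~\ref{discreteex}. Beyond getting this convention straight, I do not foresee any genuine obstacle.
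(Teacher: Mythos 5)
Your argument is correct and is essentially the paper's: the paper also runs the fixed-point spectral sequence \eqref{newE2ss} (after noting $\eta$ and the $y_i$ act trivially for parity reasons) and concludes by a degree count, merely reducing to $n=1$ by induction instead of writing down the Koszul-type resolution for all $n$ at once as you do. Your parity bookkeeping matches the bidegree $(1,-1)$ of the $\xi_i$ in \Cref{discreteex}, and since all differentials shift total degree by one, vanishing of $E_2$ in odd total degrees also forces degeneration, so convergence is not an issue.
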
 
\begin{proof} 
By induction, we may take $n = 1$ and $\mathbb{T}^n = S^1$. 
In this case, we see that $\pi_*(X)$ is, as a module over $\mathbb{Z}[y]/y^2$,
a sum of modules $M_{2i}$ concentrated in a single degree $2i$ (for $i \in
\mathbb{Z}$). The spectral sequence 
\eqref{newE2ss} is easily seen to degenerate for reasons of degree and give the result. 
Alternatively, the result follows by filtering $X$ via the Postnikov filtration. 
\end{proof} 

\begin{corollary} 
\label{categoryS1}
Let $X$ be a spectrum concentrated in degrees $0$ and $1$ with an
$\mathbb{T}^n$-action; we do not assume $\eta$ acts by zero. Suppose given $x \in \pi_0 X$. Then the following are
equivalent: 
\begin{enumerate}
\item $x$ lifts to an element of $\pi_0 X^{h \mathbb{T}^n}$. 
\item $x$ is annihilated by the operators $y_i, i = 1, \dots, n$.
\end{enumerate}
\end{corollary}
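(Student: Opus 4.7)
I will compute $\pi_0 X^{h\mathbb{T}^n}$ directly via the spectral sequence \eqref{S1ss}, exploiting the extreme sparseness of $M := \pi_*(X)$ (concentrated in degrees $0$ and $1$) to force the $E_2$-page to vanish almost everywhere along the diagonal relevant for $\pi_0$.

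The first step is to build an explicit free resolution of $A := \pi_*(S^0)$ as a module over $R := \pi_*(\Sigma^\infty_+ \mathbb{T}^n) = A[y_1, \ldots, y_n]/(y_i(y_i - \eta))$. Each one-variable factor $R_i = A[y_i]/(y_i(y_i - \eta))$ is $A$-free of rank $2$ with basis $\{1, y_i\}$; a direct annihilator computation produces the $2$-periodic resolution
\[
\cdots \to R_i[-3] \xrightarrow{y_i} R_i[-2] \xrightarrow{y_i - \eta} R_i[-1] \xrightarrow{y_i} R_i \to A \to 0.
\]
Since each term is $A$-flat, the K\"unneth-style tensor product $P^\bullet := P_1^\bullet \otimes_A \cdots \otimes_A P_n^\bullet$ is a free resolution of $A$ over $R$. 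Its term in degree $s$ is a direct sum of copies of $R[-s]$ indexed by compositions $s = s_1 + \cdots + s_n$ with $s_i \geq 0$, and the first differential $P^1 \to P^0$ sends the $i$-th generator to $y_i$.

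Now apply $\mathrm{Hom}_R(P^\bullet, M)$: the entry in cohomological degree $s$ and internal degree $t$ is a direct sum of copies of $M_{s+t}$, which vanishes unless $s + t \in \{0, 1\}$. On the diagonal $t = s$ computing $\pi_0 X^{h\mathbb{T}^n}$, this forces $s = 0$, so $E_2^{s, s} = 0$ for all $s \geq 1$; at $(s, t) = (0, 0)$ one reads off
\[
E_2^{0, 0} = \ker \bigl( M_0 \xrightarrow{(y_1, \ldots, y_n)} M_1^{\oplus n} \bigr) = \bigcap_{i=1}^n \ker \bigl( y_i \colon \pi_0 X \to \pi_1 X \bigr).
\]
On the adjacent line $t = s - 1$ (where differentials out of $E_r^{0, 0}$ land), the condition $s + t = 2s - 1 \in \{0, 1\}$ forces $s \leq 1$, so $E_2^{r, r-1} = 0$ for all $r \geq 2$.

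Consequently the differentials $d_r \colon E_r^{0, 0} \to E_r^{r, r-1}$ for $r \geq 2$ all have vanishing target, and no differentials can enter $E_r^{0, 0}$ either (they would originate in negative cohomological degree). The only nonzero term of $E_\infty$ on the $\pi_0$-diagonal is $E_\infty^{0, 0} = E_2^{0, 0}$, so the edge map identifies $\pi_0 X^{h\mathbb{T}^n}$ with $\bigcap_i \ker(y_i) \subseteq \pi_0 X$, yielding the equivalence of the two stated conditions. The main obstacle is essentially bookkeeping: verifying that the K\"unneth tensor product of the 1-variable $R_i$-resolutions is actually a resolution over $R$. This reduces to the $A$-freeness of each term, which is immediate from the presentation $R_i = A \oplus A \cdot y_i$.
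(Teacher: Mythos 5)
Your proof is correct and takes essentially the same route as the paper: both run the homotopy fixed point spectral sequence \eqref{S1ss} and exploit the fact that, with $\pi_*(X)$ concentrated in degrees $0$ and $1$, the $E_2$-term is confined to the antidiagonals $s+t\in\{0,1\}$, so the targets $E_r^{r,r-1}$ ($r\geq 2$) of differentials out of $E_2^{0,0}$ vanish and the class on the $0$-line survives. Your explicit periodic resolution merely spells out the paper's ``for grading reasons'' step, and your identification of $\pi_0 X^{h\mathbb{T}^n}$ with $\bigcap_{i}\ker(y_i)$ is a mild strengthening obtained by the same method.
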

\begin{proof}
This follows from the spectral sequence \eqref{newE2ss}. In particular, if $x$
is annihilated by the operators $y_i$, it defines a class in the 0-line $\hom_{\pi_*(
\Sigma^\infty_+ \mathbb{T}^n)}( \pi_*(S^0), \pi_*(X))$.  We need to show that it survives the
spectral sequence, i.e., it supports no differentials. 
However, we observe that 
\[ \mathrm{Ext}^{s, s-1}_{\pi_*( \Sigma^\infty_+ \mathbb{T}^n)}( \pi_*(S^0),
\pi_*(X))  = 0 \]
for $s \geq 2$ for grading reasons, cf. \Cref{discreteex}. 
This should be compared to \Cref{actionson1category} via the identification
between spectra concentrated in degree $[0, 1]$ and symmetric monoidal
groupoids where every object is invertible. 
\end{proof}

\section{Proof of Dade's theorem}

\subsection{Picard spectra}
Given a symmetric monoidal $\infty$-category $\mathcal{C}$, we will let
$\pics(\mathcal{C})$ denote the (connective) \emph{Picard spectrum} of
$\mathcal{C}$, cf., e.g.,  \cite[\S 2.2]{MS} for a discussion. Recall that: 
\begin{enumerate}
\item  $\pi_0 \pics(\mathcal{C}) =
\pic(\mathcal{C})$ is the Picard group of
isomorphism classes of invertible objects in $\mathcal{C}$.
\item  $\pi_1 \pics(\mathcal{C})$ is the group of
homotopy classes of self-equivalences of the unit $\mathbf{1} \in
\mathcal{C}$.
\item 
$\pi_i \pics(\mathcal{C})$ for $i \geq 2$ is the $(i-1)$st
homotopy group of the endomorphism space of the unit. 
\end{enumerate} 

Given an $\e{\infty}$-ring $R$, we will also write $\pics(R)$ for $\pics(
\md(R))$. We have a natural equivalence of spectra $\tau_{\geq 1} \pics(R) \simeq \Sigma \gl_1(R)$. 

Let $\mathrm{Cat}_{\otimes}$ be the $\infty$-category of symmetric monoidal
$\infty$-categories and symmetric monoidal functors between them, and let
$\sp_{\geq 0}$ denote the $\infty$-category of connective spectra. We obtain a
functor
\[ \pics\colon \mathrm{Cat}_{\otimes} \to \sp_{\geq 0} , \]
which {commutes with limits} \cite[Prop. 2.2.3]{MS}. 

For instance, it follows from 
\Cref{basicdescresult} (with the notation from there)
that we have equivalences of connective spectra
\begin{equation} \label{picshTn} \pics( k^{t BL'}) \simeq \tau_{\geq 0}( \pics(
k^{t A})^{h
B L})
\simeq \tau_{\geq 0}( \pics(
k^{t A})^{h
\mathbb{T}^n })
\simeq \tau_{\geq 0} (\pics( \st{A})^{h \mathbb{T}^n})
.  \end{equation}
To prove Dade's theorem, we will calculate the left-hand-side of this
equivalence, and show that any element of the Picard group of $\st{A}$
survives to the homotopy fixed points 
$\pics( \st{A})^{h \mathbb{T}^n}$. 
\subsection{The general tool}

In this subsection, we prove our main general tool 
for Picard groups (\Cref{descpictool}), which gives a criterion for when
invertible modules can be descended along a faithful $\mathbb{T}^n$-Galois
extension. 

\begin{proposition} 
Let $R \to R'$ be a faithful $\mathbb{T}^n$-Galois  extension. Suppose
$\pi_*(R) \to \pi_*(R')$ exhibits the target as a free module over the source and that $\eta = 0$ in $\pi_*(R)$. 
Then $\pi_*(R')$ is relatively projective 
as a module
over $\mathbb{Z}[y_1, \dots, y_n]/(y_i^2) \subset \pi_*( \Sigma^\infty_+
\mathbb{T}^n)/\eta$.
\end{proposition}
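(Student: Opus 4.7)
The plan is to exhibit $T := \pi_*(R')$ as an $A_n$-equivariant retract of $\pi_*(R' \otimes_R R')$ (equipped with the $\mathbb{T}^n$-action on the second factor); the Galois condition combined with \Cref{coinducedisfree} will force this larger module to be relatively projective, so its retract $T$ will inherit the property. Write $S := \pi_*(R)$.

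I would first note that each $y_i$ acts on $T$ as a graded derivation that annihilates $S$: since the $\mathbb{T}^n$-action on $R'$ lies over $R$ (using $R \simeq (R')^{h\mathbb{T}^n}$), each $y_i$ acts by an $R$-linear self-map and in particular kills $S$; and the primitivity of $y_i$ in the Hopf algebra $\pi_*(\Sigma^\infty_+ \mathbb{T}^n)$ yields the graded Leibniz rule on $T$.

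Next I would invoke the Galois identification $R' \otimes_R R' \simeq F(\mathbb{T}^n_+, R')$, which is $\mathbb{T}^n$-equivariant with the second-factor action on the source corresponding to translation on the target. Because $T$ is free over $S$, a K\"unneth calculation identifies $\pi_*(R' \otimes_R R')$ with $T \otimes_S T$ as $A_n$-modules, carrying the Koszul-signed action $y_i \cdot (a \otimes b) = (-1)^{|a|} a \otimes y_i(b)$ arising from the swap of $y_i$ past the first tensor factor. Since $\eta = 0$ in $\pi_*(R)$ and hence in $\pi_*(R')$, \Cref{coinducedisfree} applies to the right-hand side and shows that $T \otimes_S T$ is relatively projective over $A_n$.

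To realize $T$ as an $A_n$-summand of $T \otimes_S T$, the section $s : T \to T \otimes_S T$, $b \mapsto 1 \otimes b$, is $A_n$-equivariant by inspection. For the retraction, choose an $S$-linear splitting $\epsilon : T \to S$ of the unit $S \hookrightarrow T$---available because $T$ is $S$-free and, in the graded-connective setting of interest, $1 \in T$ extends to an $S$-basis---and set $r(a \otimes b) := \epsilon(a) \cdot b$. Well-definedness on the balanced tensor product is the $S$-linearity of $\epsilon$, $r \circ s = \mathrm{id}_T$ is immediate, and the $A_n$-equivariance of $r$ is the identity $(-1)^{|a|} \epsilon(a) y_i(b) = y_i(\epsilon(a) \cdot b)$, in which the Koszul sign appearing in the action on $T \otimes_S T$ cancels exactly the Leibniz sign $(-1)^{|\epsilon(a)|}$ from the derivation rule applied to $\epsilon(a) \in S$ and $b$. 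The main obstacle here is this careful bookkeeping of the two sources of graded sign, but once it is in place, $T$ is realized as a retract of the relatively projective $T \otimes_S T$, and is therefore itself relatively projective.
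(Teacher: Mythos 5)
Your overall strategy is the paper's: reduce to the $\mathbb{T}^n$-equivariant Galois equivalence $R' \otimes_R R' \simeq F(\mathbb{T}^n_+, R')$, whose homotopy is relatively projective by \Cref{coinducedisfree}, identify $\pi_*(R' \otimes_R R')$ with $\pi_*(R') \otimes_{\pi_*(R)} \pi_*(R')$ by K\"unneth (using freeness), and then exhibit $T = \pi_*(R')$ as an equivariant retract of this module. Your sign bookkeeping and the $\pi_*(R)$-linearity of the $y_i$-operators are fine.

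The weak point is the retraction. You need an $S$-linear splitting $\epsilon\colon T \to S$ of the unit, and you justify its existence by saying that ``in the graded-connective setting of interest'' the element $1 \in T$ extends to an $S$-basis. Neither the proposition nor its intended applications is connective: the hypotheses only say that $T$ is free over $S$, with no control on a basis, and the extensions the paper actually applies this to ($k^{h\mathbb{T}^n} \to k^{hA}$ and $k^{t\mathbb{T}^n} \to k^{tA}$) are coconnective, respectively periodic. So as written the existence of $\epsilon$ is not established. It can be repaired --- e.g., if $T \neq 0$ is free one can reduce modulo a homogeneous maximal ideal to see that the coefficients of $1_T$ in a homogeneous basis generate the unit ideal, which produces a graded splitting; and in the paper's examples $1$ does visibly extend to a basis --- but the paper's proof sidesteps the issue entirely. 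If you place the $\mathbb{T}^n$-action on the \emph{first} tensor factor (as in the paper's formulation of the Galois condition) and expand the \emph{second} factor using an $S$-basis $\{e_\alpha\}$, then $T \otimes_S T \cong \bigoplus_\alpha \Sigma^{d_\alpha} T$ as modules over $\mathbb{Z}[y_1,\dots,y_n]/(y_i^2)$, so $T$ is a shifted direct summand of a relatively projective module with no choice of splitting of the unit needed. With that modification (or a correct justification of $\epsilon$), your argument is complete and coincides in substance with the paper's.
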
 
\begin{proof} 
The assumption of freeness of $\pi_*(R')$ as $\pi_*(R)$-module imply that it suffices to check the claim about relative projectivity after 
base-change along $R \to R'$, i.e., to show that the action on 
$\mathbb{Z}[y_1, \dots, y_n]/(y_i^2)$ on $\pi_*( R' \otimes_R R')$ makes the
target relatively projective. However, this follows again because we have a
$\mathbb{T}^n$-equivariant equivalence
$R' \otimes_R R' \simeq F(\mathbb{T}^n_+, R')$. 
\end{proof} 

We will now need a general result about homotopy fixed points. 
\begin{proposition} 
\label{hfpunits}
Let $R \to R'$ be a faithful $\mathbb{T}^n$-Galois extension of
$\e{\infty}$-rings. Suppose that: 
\begin{enumerate}
\item $\eta$ acts trivially on $\pi_*(R)$.
\item  
Every element in
$\pi_1 R $ squares to zero.
\item
$\pi_*(R')$ is a free module over $\pi_*(R)$. 
\end{enumerate}
Then 
we have: 
\begin{equation}
\label{noeven}
\pi_i (\tau_{\geq 1} \gl_1(R')^{h\mathbb{T}^n})  = 
\begin{cases} 
\pi_i \gl_1(R) & i \geq 1 \\
0 & i \leq 0 \text{ and } i \ \text{even} \\
?? & i \leq 0 \text{ and } i \ \text{odd}
 \end{cases}.\end{equation} 
 If we do not assume hypothesis (2), the above conclusion \eqref{noeven} still holds
 after inverting 2. 
\end{proposition}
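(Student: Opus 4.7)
The plan is to combine Galois descent (for the positive homotopy groups) with the homotopy fixed point spectral sequence \eqref{S1ss} (for the non-positive ones). Since $\gl_1$ is right adjoint to $\Sigma^\infty_+ \Omega^\infty$, it preserves limits, so the Galois descent identification $R \simeq (R')^{h\mathbb{T}^n}$ coming from \Cref{galdesc} yields $\gl_1(R) \simeq \tau_{\geq 0}(\gl_1(R')^{h\mathbb{T}^n})$; this disposes of the range $i \geq 1$ in one stroke.

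For $i \leq 0$, I would run \eqref{S1ss} for the spectrum $\gl_1(R')$ with its $\mathbb{T}^n$-action. The first task is to verify that $\eta$ acts trivially on $\pi_* \gl_1(R')$, so that the $E_2$-page simplifies to Ext over $\mathbb{Z}[y_1,\dots,y_n]/(y_i^2)$ as in \eqref{newE2ss}. Since $\pi_*(R')$ is free over $\pi_*(R)$, hypothesis (1) forces $\eta = 0$ on $\pi_*(R')$. Then \Cref{multbyhopfone} gives $\eta \psi(x) = \psi(\eta x + x^2) = \psi(x^2)$ for $x \in \pi_1(R')$, and hypothesis (2) combined with the free module structure forces $x^2 = 0$ in $\pi_1(R')$. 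If one inverts $2$, squares of odd-degree classes vanish automatically by graded commutativity, so hypothesis (2) can be dropped.

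With $\eta$ acting trivially, \Cref{noext} and the relative projectivity argument of the preceding proposition would collapse the spectral sequence at $E_2$. The module structure on $\pi_* \gl_1(R')$ over $\mathbb{Z}[y_1,\dots,y_n]/(y_i^2)$ is controlled by \Cref{multbyy1}, which expresses the $y_i$-action in terms of the $\mathbb{T}^n$-action on $\Omega^\infty \gl_1(R')$ together with its multiplicative $H$-structure. The fixed-point calculation becomes $\bigcap_i \ker(y_i)$ acting on $\pi_t \gl_1(R')$; since $\gl_1(R')$ is connective, only $t \geq 0$ contributes, and for $t = 0$ the relevant classes feed into various negative total degrees $t-s$ via the Koszul-style resolution of $\mathbb{Z}$ over the exterior algebra $\mathbb{Z}[y_1,\dots,y_n]/(y_i^2)$, whose cohomological grading accounts for the even versus odd parity pattern in the conclusion.

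The main obstacle I expect is the careful comparison of the $y_i$-module structures on $\pi_* \gl_1(R')$ and on $\pi_*(R')$: because $\gl_1$ is built from the multiplicative rather than additive $H$-structure on $\Omega^\infty R'$, the two actions differ by corrections involving lower-degree products and Hopf invariants. Managing these corrections is exactly where hypothesis (2) enters, and where it becomes unnecessary after inverting $2$; this is the delicate heart of the argument.
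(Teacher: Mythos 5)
Your treatment of the range $i \geq 1$ (Galois descent plus the fact that $\gl_1$ preserves limits) is fine and matches the paper, and your overall frame --- the fixed-point spectral sequence \eqref{S1ss}, killing $\eta$ via \Cref{multbyhopfone}, relative projectivity, and \Cref{multbyy1} --- is the right one. But the proposal stops exactly where the real content lies, and the bookkeeping it does offer is off. First, the proposition concerns the $1$-connective cover $\tau_{\geq 1}\gl_1(R')$, and your parity explanation is inverted: classes in internal degree $t=0$ would contribute, through the Koszul-type resolution of $\mathbb{Z}$ over $\mathbb{Z}[y_1,\dots,y_n]/(y_i^2)$, precisely in \emph{even} non-positive total degrees $-2s$ (cf.\ \Cref{discreteex}), which would destroy the claimed vanishing rather than account for it. The point of the truncation is that there is no $t=0$ row at all; the odd-degree ambiguities come instead from the degree-$1$ part of $\pi_*\tau_{\geq 1}\gl_1(R')$ created when truncation cuts a relatively projective module (an extended summand straddling degrees $0$ and $1$ loses its bottom and becomes concentrated in degree $1$). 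Second, what you defer as ``the delicate heart'' --- transporting the $y_i$-module structure, and hence the decomposition into relatively projective summands plus summands concentrated in degree $1$, from $\pi_*\tau_{\geq 1}R'$ to $\pi_*\tau_{\geq 1}\gl_1(R')$ --- is precisely what must be proved, and the paper does prove it: \Cref{multbyhopfone} handles $\eta$ on $\pi_1\gl_1(R')$ (this is where the square-zero hypothesis enters), the equivalences $R'_{[i,i+1]}\simeq \gl_1(R'_{[i,i+1]})$ for $i\geq 2$ from \cite[\S 5.1]{MS} handle the higher groups, and \Cref{multbyy1} identifies the $y$-actions in positive degrees. Without this step the $E_2$-page of \eqref{newE2ss} for $\tau_{\geq 1}\gl_1(R')$ is simply not computed, so the collapse and vanishing claims are unsupported.

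There is also a structural gap for $n\geq 2$. You propose to run the rank-$n$ spectral sequence directly, but then you must control $\mathrm{Ext}$ over the rank-$n$ exterior algebra of the truncation $\tau_{\geq 1}$ of a relatively projective module; for $n\geq 2$ such a truncation is neither extended nor concentrated in a single degree, so the parity of its contributions requires a genuine argument that your sketch does not give. The paper avoids this entirely by inducting on the rank: writing $\mathbb{T}^n\simeq \mathbb{T}^{n-1}\times S^1$, it produces an $S^1$-equivariant cofiber sequence $\tau_{\geq 1}\gl_1((R')^{h\mathbb{T}^{n-1}})\to (\tau_{\geq 1}\gl_1(R'))^{h\mathbb{T}^{n-1}}\to C$ with $\pi_*C$ in negative odd degrees by induction, observes that $R\to (R')^{h\mathbb{T}^{n-1}}$ is again a faithful $S^1$-Galois extension satisfying the hypotheses, and controls $C^{hS^1}$ with \Cref{evendegrees}. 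If you wish to keep the direct rank-$n$ computation, you must supply the corresponding module-theoretic analysis over the full exterior algebra; as written, your argument (even after the repairs above) only covers $n=1$.
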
 
Of course, the conclusion of the theorem is the vanishing of $\pi_i (\tau_{\geq
1} \gl_1(R')^{h\mathbb{T}^n})$ for $i \leq 0$ even, but we have stated in this
manner for ease of reference. 
\begin{proof} 
First, we consider the case where $n = 1$ so that $R \to R'$ is an $S^1$-Galois extension.
We claim first that $\eta$ acts by zero on the homotopy groups of 
$\tau_{\geq 1} \mathfrak{gl}_1(R)$. On $\pi_1$, this follows from \Cref{multbyhopfone}. 
On higher homotopy groups, this follows because we have an equivalence of
spectra
\[ R_{[i, i+1]} \simeq \gl_1( R_{[i, i+1]}), \quad i \geq 2  \]
by \cite[\S 5.1]{MS}. 
It follows that $\pi_*( \tau_{\geq 1}
(R))$ is  the direct sum of relatively projective
$\mathbb{Z}[y]/y^2$-modules and graded $\mathbb{Z}[y]/y^2$ modules $M$
concentrated in degree one. 
By \Cref{multbyy1} and our assumptions, 
the same holds for 
$\pi_*( \tau_{\geq 1}\gl_1
(R))$. 
The spectral sequence now implies that $\pi_i (\tau_{\geq 1} \gl_1(R')^{hS^1})$
is as desired. 

Now consider a torus of arbitrary rank $\mathbb{T}^n$. 
Choose a decomposition $\mathbb{T}^n \simeq \mathbb{T}^{n-1} \times S^1$. 
By induction on $n$, we can assume that $\pi_i ( \tau_{\geq 1}
\gl_1(R'))^{h\mathbb{T}^{n-1}} = 0$ for $i \leq 0$ even. 
We consider the $S^1$-equivariant cofiber 
sequence
\[ 
\tau_{\geq 1}\gl_1 (R'^{h \mathbb{T}^{n-1}}) \to 
(\tau_{\geq 1}
\gl_1(R'))^{h\mathbb{T}^{n-1}} \to C , 
\]
where by the inductive assumption the homotopy groups of $C$ are concentrated in negative,
odd degrees. 
Taking $S^1$-homotopy fixed points, we obtain a cofiber sequence of spectra
\[ 
\left(\tau_{\geq 1}\gl_1 (R'^{h \mathbb{T}^{n-1}})\right)^{hS^1} \to 
(\tau_{\geq 1}
\gl_1(R'))^{h\mathbb{T}^{n}} \to C^{hS^1} . 
\]
By \Cref{evendegrees}, it follows that $C^{hS^1}$ is concentrated in negative,
odd degrees. The map $R \to R'^{h\mathbb{T}^{n-1}}$ is a faithful $S^1$-Galois 
extension to which the hypotheses of this result applies, and the case of rank
one implies that 
$\left(\tau_{\geq 1}\gl_1 (R'^{h \mathbb{T}^{n-1}})\right)^{hS^1} $ has
vanishing $\pi_i$ for $i \leq 0$ even. Combining these completes the proof. 
When we invert $2$ on $\gl_1(R)$, we note that one does not have to worry about
$\eta$ and the above goes through more simply. 
\end{proof}

We will now consider the following general question. Let $R \to R'$ be a
faithful $\mathbb{T}^n$-Galois extension and let $\mathcal{L} \in \pic(R')$. We
ask when $\mathcal{L}$ descends to an $R$-module. 
Since there is a $\mathbb{T}^n$-action on the $\infty$-category $\md(R')$, each
element $a \in \pi_1(\mathbb{T}^n)$ induces a natural automorphism of the
identity functor of $\md(R)$. In particular, each $a$ induces a natural
automorphism of $R'$-modules $\mathcal{L} \simeq \mathcal{L}$, which is
classified by an element in $\pi_0(R')^{\times}$. A necessary condition for  
$\mathcal{L}$  to descend is that this monodromy automorphism should be the
identity. 

\begin{theorem} 
\label{descpictool}
Let $R \to R'$ be a faithful $\mathbb{T}^n$-Galois extension of $\e{\infty}$-ring
spectra. 
Suppose the hypotheses of \Cref{hfpunits} are satisfied. 
Let $\mathcal{L} \in \pic(R')$. Then $\mathcal{L}$ descends to an invertible
$R$-module if and only if for every $a \in \pi_1(\mathbb{T}^n)$, the induced
monodromy automorphism $a\colon \mathcal{L} \to \mathcal{L}$ is the
identity.

If we assume only that the first and third hypotheses of \Cref{hfpunits}
are satisfied,
then given any such $\mathcal{L}$, the tensor power $\mathcal{L}^{\otimes 2^n}$ descends for $n \gg 0$. 
\end{theorem}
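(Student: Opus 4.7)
The plan is to use the Postnikov tower of the Picard spectrum $\pics(R')$, equipped with its $\mathbb{T}^n$-action, to reduce the descent question to the $1$-truncated case handled by \Cref{categoryS1}. By \Cref{galdesc} and the fact that $\pics$ preserves limits, we have $\pics(R) \simeq \tau_{\geq 0}\pics(R')^{h\mathbb{T}^n}$, so $\mathcal{L}$ descends to $\md(R)$ precisely when it lifts to $\pi_0\pics(R')^{h\mathbb{T}^n}$.

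First I would use the fiber sequence $\tau_{\geq 2}\pics(R') \to \pics(R') \to \tau_{\leq 1}\pics(R')$. Since $\tau_{\geq 1}\pics(R') \simeq \Sigma\gl_1(R')$, one has $\tau_{\geq 2}\pics(R') \simeq \Sigma\tau_{\geq 1}\gl_1(R')$, so the obstruction to promoting a class in $\pi_0(\tau_{\leq 1}\pics(R'))^{h\mathbb{T}^n}$ up to $\pi_0\pics(R')^{h\mathbb{T}^n}$ lives in
\[ G := \pi_{-1}(\tau_{\geq 2}\pics(R'))^{h\mathbb{T}^n} = \pi_{-2}(\tau_{\geq 1}\gl_1(R'))^{h\mathbb{T}^n}. \]
Under hypotheses (1)--(3) of \Cref{hfpunits}, $G = 0$, so the long exact sequence in homotopy groups gives a surjection $\pi_0\pics(R')^{h\mathbb{T}^n} \twoheadrightarrow \pi_0(\tau_{\leq 1}\pics(R'))^{h\mathbb{T}^n}$, reducing descent of $\mathcal{L}$ to the analogous question for its image in the $1$-truncation.

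Next I would apply \Cref{categoryS1} to the $\mathbb{T}^n$-spectrum $\tau_{\leq 1}\pics(R')$: the class $\mathcal{L}$ lifts if and only if each operator $y_i$ annihilates it in $\pi_1 = (\pi_0 R')^{\times}$. The main conceptual step, and the one I expect to be most delicate, is identifying $y_i \cdot \mathcal{L}$ with the geometric monodromy. By \Cref{multbyy1}, the $i$th circle factor acting on $\Omega^{\infty}\pics(R')$ produces an unpointed loop at $\mathcal{L}$, which $H$-space translation turns into a based loop, i.e., an element of $\mathrm{Aut}(\mathcal{L}) = (\pi_0 R')^{\times}$; this is by definition the monodromy automorphism. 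The converse is automatic by naturality: any descended lift of $\mathcal{L}$ pushes forward to a descended lift in the $1$-truncation, forcing trivial monodromy.

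For the final assertion, when hypothesis (2) is dropped, the proof of \Cref{hfpunits} still gives the vanishing after inverting $2$, so $G[1/2] = 0$; that is, every element of $G$ is $2^k$-torsion for some $k$. Given $\mathcal{L}$ with trivial monodromy, pick a lift $\tilde{\mathcal{L}} \in \pi_0(\tau_{\leq 1}\pics(R'))^{h\mathbb{T}^n}$ by the previous argument. The connecting homomorphism $\partial$ is additive, and using the $\e{\infty}$-structure on $\pics$ (under which tensor powers lift tensor powers), one has $\partial(\tilde{\mathcal{L}}^{\otimes 2^k}) = 2^k\,\partial(\tilde{\mathcal{L}})$. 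Choosing $k$ large enough to kill $\partial(\tilde{\mathcal{L}})$ yields a zero obstruction for $\mathcal{L}^{\otimes 2^k}$, so this tensor power descends.
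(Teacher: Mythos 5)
Your argument is correct and follows essentially the same route as the paper: Galois descent plus the truncation fiber sequence $\Sigma\tau_{\geq 1}\gl_1(R') \to \pics(R') \to \tau_{\leq 1}\pics(R')$, with \Cref{categoryS1} handling the $1$-truncated lifting via the monodromy/$y_i$ identification and \Cref{hfpunits} killing the obstruction group $\pi_{-2}\bigl((\tau_{\geq 1}\gl_1(R'))^{h\mathbb{T}^n}\bigr)$. Your treatment of the last claim (the obstruction class being $2$-power torsion after dropping hypothesis (2), and additivity of the connecting map under tensor powers) spells out a step the paper leaves implicit, and is consistent with its intent.
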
 
\begin{proof} 

By Galois descent, we need to show that the map 
\[ \pics( R')^{h\mathbb{T}^n} \to 
\pics(R') \to \tau_{\leq 1} \pics(R')
\]
has the property that the class of $\mathcal{L}$ belongs to the image in
$\pi_0$. 
For this, we consider the diagram of spectra
\[  \xymatrix{
\Sigma(\tau_{\geq 1} \gl_1( R'))^{h\mathbb{T}^n} 
\ar[r] &  
 (\pics( R'))^{h\mathbb{T}^n}  \ar[r]  \ar[rd] &  
(\tau_{\leq 1}\pics( R'))^{h\mathbb{T}^n} \ar[d] \\
& & \tau_{\leq 1}\pics( R'),
}
\]
where the top row is a cofiber sequence of spectra. 
The assumption about monodromy implies that $\mathcal{L}$ belongs to the image of the vertical
map of spectra, cf. \Cref{categoryS1}. It therefore suffices to show that 
the horizontal map
\[ (\pics( k^{t A}))^{h\mathbb{T}^n}  \to 
(\tau_{\leq 1}\pics( k^{t A}))^{h\mathbb{T}^n} 
\]
induces a surjection on $\pi_0$. This in turn follows because 
$\pi_{-2}( \tau_{\geq 1} \gl_1( R'))^{h\mathbb{T}^n}) = 0$
by \Cref{hfpunits}.
\end{proof} 

\subsection{Dade's theorem and its variants}
In this subsection, we complete the proof of Dade's theorem and its version
over an open subset of projective space. 
Our first goal is to calculate the Picard group of $k^{t \mathbb{T}^n}$ for any
$n$. 
\begin{proposition} 
\label{picstT}
The Picard group of $k^{t \mathbb{T}^n}$ is cyclic, generated by the
suspension $\Sigma k^{t \mathbb{T}^n}$. 
\end{proposition}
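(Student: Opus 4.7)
The plan is to compute $\pic(k^{t\mathbb{T}^n})$ by \v{C}ech descent along an affine cover whose pieces have regular, even-concentrated homotopy, reducing to the Baker-Richter / Hill-Meier algebraic computation, and then assembling the answer via the Picard spectrum descent spectral sequence of \cite{MS}.

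Concretely, I would set $R_I := k^{h\mathbb{T}^n}[x_i^{-1} : i \in I]$ for each nonempty $I \subset \{1, \dots, n\}$. By construction (cf.\ \Cref{tateex}), $k^{t\mathbb{T}^n}$ is the localization of $k^{h\mathbb{T}^n}$ away from the Koszul object $k^{h\mathbb{T}^n}/(x_1, \dots, x_n)$, so the $x_i$ generate the unit ideal in $k^{t\mathbb{T}^n}$ and the family $\{k^{t\mathbb{T}^n} \to R_i\}_{i=1}^n$ is jointly conservative on modules. Consequently $\pics(k^{t\mathbb{T}^n})$ is the totalization of the cosimplicial spectrum $I \mapsto \pics(R_I)$. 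Each $\pi_* R_I \cong k[x_1, \dots, x_n][x_i^{-1} : i \in I]$ is a regular graded ring concentrated in even degrees, so by \cite[Th.~6.4]{HillMeier} we have $\pic(R_I) \cong \mathbb{Z}$, generated by $\Sigma$.

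I would then feed these into the descent spectral sequence
\[ E_1^{s,t} = \prod_{|I|=s+1} \pi_t \pics(R_I) \Longrightarrow \pi_{t-s}\pics(k^{t\mathbb{T}^n}). \]
The zero line after $d_1$ reduces to a diagonal $\mathbb{Z}$ from the $\Sigma$ classes together with the \v{C}ech computation of the Picard group relative to the standard affine cover $\{D(x_i)\}$ of $\mathbb{A}^n \setminus \{0\}$; since $\mathbb{A}^n \setminus \{0\}$ is an open subscheme of the UFD $\mathbb{A}^n$ (with complement of codimension $\geq 2$ for $n \geq 2$, and equal to $\mathbb{G}_m$ for $n = 1$), its classical Picard group vanishes, so $E_2^{0,0} \cong \mathbb{Z}$ is generated by $\Sigma$. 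For $n = 1$ a direct check using $\pi_* k^{tS^1} = k[x^{\pm 1}]$ (a graded field) yields $\pic(k^{tS^1}) = \mathbb{Z}/2$, with the relation $\Sigma^2 \simeq \mathbf{1}$ witnessed by the unit $x \in \pi_{-2}$; for $n \geq 2$ no single $x_i$ is invertible, so the cyclic group remains $\mathbb{Z}$.

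The main obstacle is ruling out potential contributions to $\pi_0\pics(k^{t\mathbb{T}^n})$ from higher filtration, namely from surviving classes in $E_\infty^{s,s}$ for $s \geq 1$. Using the identification $\pi_i\gl_1(R_I) \cong \pi_i R_I$ for $i \geq 1$ (valid since $\eta = 0$ by even concentration, cf.\ \cite[\S 5.1]{MS}), these rows reduce to \v{C}ech cohomology of the line bundles $\mathcal{O}(d)$ on $\mathbb{A}^n \setminus \{0\}$. These groups are concentrated in cohomological degrees $0$ and $n-1$ by the standard local cohomology calculation, and tracking the bidegree constraint $t=s$ for contributions to $\pi_0$ shows that no such class lies in a degree that could disturb the $\Sigma$-generator, parallel to the analysis carried out in \cite{MS} for chromatic examples.
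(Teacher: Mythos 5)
Your overall strategy is the same as the paper's: cover $k^{t\mathbb{T}^n}$ by the even periodic localizations $R_I$ (which glue to the derived scheme with underlying space $\mathbb{P}^{n-1}_k$ used in the paper), use regularity and evenness to make the local Picard groups algebraic, and control the descent spectral sequence by the cohomology of line bundles. The descent input itself is essentially fine, although "the $x_i$ generate the unit ideal in $k^{t\mathbb{T}^n}$" is false on homotopy groups for $n\geq 2$ (the polynomial part and the local cohomology part are each preserved by multiplication by $x_i$); what is true, and what you need, is that the Koszul object $k^{t\mathbb{T}^n}/(x_1,\dots,x_n)$ vanishes, which gives joint conservativity and, with the standard smashing/descendability argument (this is where the paper instead invokes $0$-affineness via ampleness of $\pi_{-2}\otop$), descent of module categories and hence of $\pics$ along the cover.

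The genuine error is the local computation and the resulting bookkeeping on the zero line. On each chart some $x_i$ is inverted, so $R_I$ is $2$-periodic: $\Sigma^2 R_I \simeq R_I$, and the Baker--Richter/Hill--Meier algebraicity gives $\pic(R_I) \cong \pic(\pi_0 R_I) \times \mathbb{Z}/2 \cong \mathbb{Z}/2$, not $\mathbb{Z}$; your remark that "no single $x_i$ is invertible" confuses the global ring with its charts. Consequently $E_2^{0,0} \cong \mathbb{Z}/2$, and for $n \geq 2$ the infinite cyclic answer cannot come from filtration $0$: it arises from the filtration-one term $\check{H}^1$ of the units of the charts, which is $\pic(\mathbb{P}^{n-1}_k) \cong \mathbb{Z}$ (generated by the cocycle of transition units $x_i/x_j$, i.e. by the twist corresponding to $\pi_2\otop \cong \mathcal{O}(-1)$), glued to the $\mathbb{Z}/2$ by the nonsplit extension in which $\Sigma^2 k^{t\mathbb{T}^n}$ maps to a generator of $\pic(\mathbb{P}^{n-1}_k)$. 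Your appeal to the vanishing of the ungraded Picard group of $\mathbb{A}^n\setminus\{0\}$ points in exactly the wrong direction: the relevant $H^1$ of units is the graded (equivalently projective-space) Picard group, and it is the nonvanishing of this term, together with the identification of $\Sigma^2$ with the twist, that makes $\pic(k^{t\mathbb{T}^n})$ infinite cyclic on $\Sigma$ rather than $\mathbb{Z}/2$. The $n=1$ case and the analysis of the rows $t\geq 2$ (concentration of line-bundle cohomology in degrees $0$ and $n-1$, with the top-degree classes missing the relevant bidegrees) are in the right spirit and match the paper's argument, but as written the proof does not establish the key extension and so does not prove the statement for $n\geq 2$.
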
 
\begin{proof} 
As explained in \cite[\S 9.4]{galois}, there exists an even periodic derived scheme
$\mathfrak{X}$ (with structure sheaf $\otop$) such that $\Gamma(
\mathfrak{X}, \otop) \simeq k^{t \mathbb{T}^n}$ and with the following
properties: 
\begin{enumerate}
\item The underlying ordinary scheme of $\mathfrak{X}$ is $\mathbb{P}^{n-1}_k$. 
\item $\pi_2\otop $ is the sheaf $\mathcal{O}(-1)$ on $\mathbb{P}^{n-1}_k$.
\item There is an equivalence of symmetric monoidal $\infty$-categories $\md(
k^{t \mathbb{T}^n}) \simeq \mathrm{QCoh}( \mathfrak{X})$. Equivalently,
$\mathrm{QCoh}(\mathfrak{X})$ admits the unit as a compact generator
\cite[Prop. 7.1.2.7]{HA}. 
\end{enumerate}

The derived scheme $\mathfrak{X}$ is constructed by inverting each of the
polynomial generators in $k^{h \mathbb{T}^n}$ to obtain an even periodic
$\e{\infty}$-ring with $\pi_0$ a polynomial algebra over $k$. One glues
together these affine spaces to form a $\mathbb{P}^{n-1}_k$. 
The third assertion follows from the ampleness of $\pi_{-2} (\otop)$, cf.
\cite[Prop 3.22]{MMaffine}.

Zariski locally, $\otop$ is a sheaf of even periodic  ring spectra with
\emph{regular} $\pi_0$, so that the Picard group is algebraic (cf.
\cite{BakerRichter},
\cite[Th. 2.4.4]{MS}). 
In particular, it follows that we can calculate the Picard group of 
$k^{t \mathbb{T}^n}$ using the 
descent spectral sequence \cite[\S 3]{MS}. Using the cohomology of line bundles
on
projective space \cite[III.5]{Ha77}, one sees that there is no contribution to
the Picard group.  We note that $H^i( \mathbb{P}^{n-1}, \mathcal{O}(-r)) = 0$ if $i
\notin \left\{0, n-1\right\}$ for all $r \in \mathbb{Z}$.
We have $H^{n-1}( \mathbb{P}^{n-1}, \mathcal{O}(-r))
\simeq H^{n-1}( \mathbb{P}^{n-1}, \pi_{2r}\otop)
\neq 0$ for $r \geq n$, but these will not contribute in the Picard spectral
sequence.
\end{proof}


\begin{theorem}[{Dade \cite[Th. 10.1]{DadeII}}] 
If $A$ is an abelian $p$-group and $k$ is a field of characteristic $p$, then
the Picard group of $\st{A}$ is cyclic, generated by the suspension of the unit.
\end{theorem}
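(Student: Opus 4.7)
The plan is to apply the descent machinery of \Cref{descpictool} to the faithful $\mathbb{T}^n$-Galois extension $k^{tBL'} \to k^{tA}$ from \Cref{galoistate1}, where $0 \to L' \to L \to A \to 0$ is a short exact sequence with $L \simeq \mathbb{Z}^n$. Given an invertible object $\mathcal{L} \in \pic(\st{A}) = \pic(k^{tA})$, the goal is to show $\mathcal{L}$ descends to an invertible $k^{tBL'}$-module, then invoke \Cref{picstT} to conclude that $\mathcal{L} \simeq \Sigma^i k^{tA}$ for some $i$, corresponding to $\Sigma^i k$ under $\st{A} \simeq \md(k^{tA})$.

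First I would verify that the hypotheses of \Cref{hfpunits} (hence of \Cref{descpictool}) apply. Since $k^{tBL'}$ underlies the even-periodic derived scheme on $\mathbb{P}^{n-1}_k$ described in the proof of \Cref{picstT}, its homotopy is concentrated in even degrees; thus $\eta$ acts by zero on $\pi_*(k^{tBL'})$ and the squaring hypothesis on $\pi_1$ is vacuous. Freeness of $\pi_*(k^{tA})$ over $\pi_*(k^{tBL'})$ follows from the explicit calculation in \Cref{thismapisgalois} combined with the pushout square defining the Tate construction.

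The main step, and the conceptual heart of the proof (exactly the naive idea sketched in \Cref{subsec:structured}), is verifying the monodromy condition: for each $a \in \pi_1(\mathbb{T}^n) = L$, the automorphism $a \colon \mathcal{L} \to \mathcal{L}$ must be the identity. I would use crucially that the $BL$-action on $\st{A}$ extends to a $BA$-action by \Cref{basicdescresult}(2), so that the monodromy along $a$ factors through its image in $A$. Since $\mathcal{L}$ is invertible, $\mathrm{End}_{k^{tA}}(\mathcal{L}) \simeq k^{tA}$, so the automorphism is classified by an element of $\pi_0(k^{tA})^\times = \hat{H}^0(A;k)^\times = k^\times$, and naturality promotes the assignment to a homomorphism $A \to k^\times$. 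Since $A$ is a finite abelian $p$-group and $k^\times$ is $p$-torsion-free (as $\mathrm{char}(k) = p$), this homomorphism must vanish, so the monodromy on $\mathcal{L}$ is trivial.

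Applying \Cref{descpictool}, $\mathcal{L}$ descends to an invertible $k^{tBL'}$-module $\mathcal{L}_0$, and \Cref{picstT} identifies $\mathcal{L}_0$ with $\Sigma^i k^{tBL'}$ for some $i \in \mathbb{Z}$. Base-changing along $k^{tBL'} \to k^{tA}$ yields $\mathcal{L} \simeq \Sigma^i k^{tA}$, corresponding to $\Sigma^i k \in \st{A}$. The principal obstacle is really just packaging: once the monodromy calculation $A \to k^\times$ is in hand, the rest is formal descent, with \Cref{descpictool} doing all the heavy technical lifting (particularly the obstruction theory showing that the potential obstruction in $\pi_{-2}$ of the homotopy fixed points of $\tau_{\geq 1}\gl_1$ vanishes).
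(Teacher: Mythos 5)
Your overall strategy is the paper's own: identify $\st{A} \simeq \md(k^{tA})$ via \Cref{stmodtate}, use the faithful $\mathbb{T}^n$-Galois extension $k^{tBL'} \simeq k^{t\mathbb{T}^n} \to k^{tA}$, kill the monodromy by combining the extension of the $BL$-action to a $BA$-action with the $p$-torsion-freeness of $k^{\times}$, descend via \Cref{descpictool}, and conclude with \Cref{picstT}. The monodromy argument and the freeness check (by base-change from $k^{h\mathbb{T}^n} \to k^{hA}$) are exactly as in the paper.

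There is, however, a gap in your verification of the hypotheses of \Cref{hfpunits}. The squaring condition is not vacuous: as the proof of \Cref{hfpunits} shows (via \Cref{multbyhopfone} with $\delta_1 = \eta$, and via the induction over the factors of the torus), what the argument actually uses is that degree-one classes square to zero in the ring whose $\gl_1$ one takes homotopy fixed points of, namely $k^{tA}$ (and the intermediate fixed-point rings), not merely in $\pi_1$ of the base. This holds automatically for $p$ odd, and for $p = 2$ it holds provided $A \neq C_2$; but it fails for $A = C_2$, where $\pi_*(k^{tC_2}) \simeq k[u^{\pm 1}]$ with $|u| = 1$ and $u^2 \neq 0$. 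The paper accordingly checks that every element of $\pi_1(k^{tA})$ squares to zero when $A \neq C_2$, and disposes of $A = C_2$ separately (there $\pi_* k^{tC_2}$ is a graded field, so every module is free and the Picard group is trivial); your proof needs these same two steps. A smaller slip: $\pi_*(k^{tBL'})$ is \emph{not} concentrated in even degrees when $n$ is even (there are odd classes coming from $H^{n-1}(\mathbb{P}^{n-1}_k, \mathcal{O}(-r))$ for $r \geq n$ in the descent spectral sequence), although the consequences you draw from evenness --- that $\eta$ acts by zero, which holds for any $k$-algebra, and that $\pi_1(k^{t\mathbb{T}^n}) = 0$ --- are still true.
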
 
\begin{proof}
If $ A= C_2$, the homotopy groups of $k^{tA}$ 
are a graded field $k[u_1^{\pm 1}]$ and every module over $k^{tA}$ is free, so the Picard group
is trivial. 
We will therefore assume $A \neq C_2$.
Then, every element of $\pi_1 (k^{tA})$ squares to zero.

First, we use \Cref{stmodtate} to identify $\st{A}$ and $\md( k^{tA})$, so we
equivalently need to determine the Picard group of the $\e{\infty}$-ring
$k^{tA}$.
We have a faithful $\mathbb{T}^n$-Galois extension of $\e{\infty}$-rings $k^{t \mathbb{T}^n} \to k^{tA}$. 
Note that $\pi_*(k^{tA})$ is a free module over $\pi_*(k^{t \mathbb{T}^n})$; in
fact, one checks this for the faithful $\mathbb{T}^n$-Galois extension
$k^{h\mathbb{T}^n} \to k^{hA}$ of which this is a base-change. 

Fix now an invertible $k^{t A}$-module $M$. By \Cref{descpictool}, it descends
to $k^{t \mathbb{T}^n}$ if and only if the monodromy elements in $\pi_1
(\mathbb{T}^n)$ act trivially on $M$, i.e., give the identity in $(\pi_0
k^{tA})^{\times} = k^{\times}$. 
Since the $\mathbb{T}^n$-action on $\md(k^{tA})$ extends to a $B A$-action, the
monodromy necessarily acts by $p$-power torsion elements. Since $k^{\times}$ is
$p$-torsion free, the monodromy is trivial. 
It follows that $M$ descends to an invertible module over $k^{t \mathbb{T}^n}$. 
By \Cref{picstT}, the Picard group of $k^{t \mathbb{T}^n}$ is generated by the suspension
of the unit, so we are done. 
\end{proof}

We now describe how one can use the above arguments to obtain partial results
about the Picard groups of localized stable module categories. 
\begin{construction}
Let $U \subset \mathbb{P}^{n-1}_k$ be an open subset. 
We can then form an appropriate localization of the stable module category.
Namely, we construct the even periodic derived scheme  $\mathfrak{X}$ as in
\Cref{picstT} and consider the open subscheme $\mathfrak{U} \subset
\mathfrak{X}$ corresponding to $U$. 
We construct
the $\e{\infty}$-rings
\[ k^{t \mathbb{T}^n}_U \stackrel{\mathrm{def}}{=} 
\Gamma( \mathfrak{U}, \otop), \quad 
k^{tA}_U  \simeq k^{t \mathbb{T}^n}_U \otimes_{k^{t \mathbb{T}^n}} k^{tA}.  \]
The associated 
module $\infty$-category $\md( k^{tA}_U)$ is the localization (over $U$) of the
stable module $\infty$-category.
\end{construction}

In general, we do not know how to control the Picard group of these localized
stable module $\infty$-categories. However, we can obtain the following result.

\begin{proposition} 
Given an element $\mathcal{L} \in \pic( k^{tA}_U)$, $\mathcal{L}^{\otimes p^s}$
is a suspension of the unit for $s \gg 0$.
In particular, the quotient of $\pic((\st{A})_U)$ by the cyclic part generated
by the suspensions of the unit is $p$-power torsion.
\end{proposition}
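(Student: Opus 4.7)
The plan is to adapt the proof of Dade's theorem to the localized setting. There are two new sources of $p$-power torsion that must be absorbed: the monodromy of an invertible module now lives in units of a larger ring which need not be $p$-torsion-free, and the Picard group of the base $k^{t\mathbb{T}^n}_U$ picks up new contributions from higher cohomology on $U$.

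First, by stability of Galois extensions under base change, the faithful $\mathbb{T}^n$-Galois extension $k^{t\mathbb{T}^n} \to k^{tA}$ pulls back to a faithful $\mathbb{T}^n$-Galois extension $k^{t\mathbb{T}^n}_U \to k^{tA}_U$, inheriting the $BA$-action extending the $\mathbb{T}^n$-action on $\md(k^{tA}_U)$ as well as the hypotheses of \Cref{hfpunits} needed to apply \Cref{descpictool}: freeness of $\pi_*(k^{tA}_U)$ over $\pi_*(k^{t\mathbb{T}^n}_U)$ holds by base change, and $\eta$ acts trivially on $\pi_*(k^{t\mathbb{T}^n}_U)$ because the unit factors through the discrete ring $k$. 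Given $\mathcal{L} \in \pic(k^{tA}_U)$, the monodromy homomorphism $L = \pi_1(\mathbb{T}^n) \to \pi_0(k^{tA}_U)^\times$ factors as $L \to A$ via the $BA$-action; since $A$ is a finite $p$-group, its image is a finite $p$-group annihilated by some $p^{s_0}$, so $\mathcal{L}^{\otimes p^{s_0}}$ has trivial monodromy. \Cref{descpictool} then yields a $p$-power $\mathcal{L}^{\otimes p^{s_1}}$ that descends to some invertible $\mathcal{M} \in \pic(k^{t\mathbb{T}^n}_U)$. (For $p > 2$, odd-degree elements of $\pi_*(k^{t\mathbb{T}^n}_U)$ square to zero by graded commutativity, so the first part of \Cref{descpictool} applies with $s_1 = s_0$; for $p = 2$ the second part gives descent of $\mathcal{L}^{\otimes p^{s_0} \cdot 2^t}$ for $t$ large, which is still a $p$-power since $p = 2$.)

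It remains to show that $\mathcal{M}^{\otimes p^{s_2}}$ is equivalent to a suspension of $k^{t\mathbb{T}^n}_U$ for some $s_2$, i.e., that the quotient of $\pic(k^{t\mathbb{T}^n}_U)$ by the cyclic subgroup generated by $\Sigma$ is $p$-power torsion. I would analyze this using the Picard descent spectral sequence on the even-periodic derived scheme $\mathfrak{U}$ whose underlying scheme is $U \subset \mathbb{P}^{n-1}_k$: the $s=0$ contribution is $\mathbb{Z}/2$ from the shift, absorbed by $\Sigma$; the $s=1$ contribution $H^1(U, \mathcal{O}_U^\times) = \pic(U)$ is a cyclic quotient of $\pic(\mathbb{P}^{n-1}_k) = \mathbb{Z}$ generated by $\mathcal{O}(1)|_U$, which corresponds to a power of $\Sigma^2$ in the derived Picard group and so is also absorbed; the contributions for $s \geq 2$ are coherent cohomology groups of line bundles on $U$ over $k$, hence $k$-vector spaces, and since $\dim U < \infty$ the filtration has finite length, making the quotient $p$-power torsion. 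Pulling back $\mathcal{M}^{\otimes p^{s_2}} \simeq \Sigma^r k^{t\mathbb{T}^n}_U$ along $k^{t\mathbb{T}^n}_U \to k^{tA}_U$ yields $\mathcal{L}^{\otimes p^{s_1 + s_2}} \simeq \Sigma^r k^{tA}_U$, as desired. The main obstacle is executing the Picard spectral sequence analysis carefully, in particular identifying $\mathcal{O}(1)|_U$ with a suspension at the $s=1$ line of the filtration, in parallel with the proof of \Cref{picstT}.
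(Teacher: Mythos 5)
Your proposal is correct and follows essentially the same route as the paper: kill the monodromy by a $p$-power using the fact that the $\mathbb{T}^n$-action extends to a $BA$-action, descend via \Cref{descpictool} (invoking its final claim when $p=2$), and then control $\pic(k^{t\mathbb{T}^n}_U)$ via $0$-affineness of $\mathfrak{U}$ and the Picard descent spectral sequence, where $\pic(U)$ is cyclic generated by the twisting sheaf (surjectivity of restriction from $\mathbb{P}^{n-1}_k$ by regularity) and the higher filtration terms are finitely many $p$-power torsion groups. The one point you flag---identifying $\mathcal{O}(1)|_U$ with a suspension of the unit---is exactly how the paper implicitly handles it, via $\pi_2\otop \simeq \mathcal{O}(-1)$ from \Cref{picstT}.
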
 
\begin{proof} 
We first argue that the Picard group of $k_U^{t \mathbb{T}^n}$ is cyclic 
after
$p^{-1}$-localization.
First, $\mathfrak{U}$ is also 0-affine (cf. \cite[Prop. 3.28]{MMaffine}),
i.e., one has an equivalence of symmetric monoidal $\infty$-categories $\md( k^{t\mathbb{T}^n}_U) \simeq \mathrm{QCoh}(
\mathfrak{U})$. Now the Picard group of the
\emph{scheme} $U$ is cyclic, generated by the twisting sheaf since the
restriction map $\mathrm{Pic}(\mathbb{P}^{n-1}_k) \to \mathrm{Pic}(U)$ is
surjective by regularity. The remaining terms in the descent spectral sequence
as in 
\Cref{picstT} 
that can contribute to the Picard group of $k_U^{t \mathbb{T}^n}$ (or
equivalently $\mathfrak{U}$) are all
$p$-power torsion and there are only finitely many of them.  

The descent from $k^{tA}_U$ to $k^{ t\mathbb{T}^n}_U$ can be carried out similarly. 
Namely, we claim that if $\mathcal{L} \in \pic(k^{tA}_U)$, there exists $r$ such that
$\mathcal{L}^{\otimes p^r}$ descends to $\pic( k^{t \mathbb{T}^n}_U)$. 
In fact, by \Cref{descpictool} it suffices to show that the monodromy action
on $\mathcal{L}^{\otimes p^r}$ (which gives an element of $\pi_0
(k^{tA}_U)^{\times}$) is trivial. Since this monodromy action is the $p^r$th
power of the monodromy element for $\mathcal{L}$ itself, it suffices to take
$r$ large enough that $p^r $ annihilates $A$. 
Note that if $p = 2$, there may be elements in $\pi_1$ which do not square to
zero, but we use instead the last 
claim of \Cref{descpictool}.
\end{proof}

\begin{theorem} 
Suppose $U \subset \mathbb{P}^{n-1}_k$ is affine and that $p > 2$.  
Let $\mathcal{L} \in \pic( (\st{A})_U)$. Suppose $\mathcal{L}$ is the
$U$-localization of a compact object in 
$\st{A}$. Then $\mathcal{L}$ is the suspension of a unit.
\end{theorem}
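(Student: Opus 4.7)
The preceding proposition shows $\mathcal{L}^{\otimes p^s} \simeq \Sigma^i k^{tA}_U$ for some $s,i$, so the task is to strengthen this to show $\mathcal{L}$ itself is a suspension. The approach I would take is $\mathbb{T}^n$-Galois descent along $k^{t\mathbb{T}^n}_U \to k^{tA}_U$ (a base change of the original extension), combined with a direct computation of $\pic(k^{t\mathbb{T}^n}_U)$ using that $U$ is affine.

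First, I would analyze $\pic(k^{t\mathbb{T}^n}_U)$ along the lines of \Cref{picstT}. The derived scheme $\mathfrak{U} \subset \mathfrak{X}$ over $U$ is 0-affine, even periodic with regular $\pi_0$, and because $U$ is affine, every higher Zariski cohomology group of every coherent sheaf on $U$ vanishes, so the Picard spectral sequence of $\mathfrak{U}$ collapses to its purely algebraic contributions. The ordinary Picard group $\pic(U)$ is a cyclic quotient of $\pic(\mathbb{P}^{n-1}_k) = \mathbb{Z}$ generated by the class of $\mathcal{O}(-1) \simeq \Sigma^2 k^{t\mathbb{T}^n}_U$, so $\pic(k^{t\mathbb{T}^n}_U)$ is cyclic, generated by the suspension. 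Consequently any invertible $k^{tA}_U$-module that descends along $k^{t\mathbb{T}^n}_U \to k^{tA}_U$ must push forward to a suspension of the unit in $(\st{A})_U$.

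Second, I would apply \Cref{descpictool} (whose hypotheses hold because $p > 2$ and $\pi_*(k^{tA}_U)$ is free over $\pi_*(k^{t\mathbb{T}^n}_U)$) to reduce to showing that the monodromy character $\chi\colon \pi_1(\mathbb{T}^n) \to \pi_0(k^{tA}_U)^\times$ attached to $\mathcal{L}$ is trivial. Since the $\mathbb{T}^n$-action on $\md(k^{tA}_U)$ extends to a $BA$-action (\Cref{basicdescresult}(2)) with $A$ a finite $p$-group, $\chi$ factors through $A$ and so lands in $p$-power torsion of $\pi_0(k^{tA}_U)^\times$. The reduced quotient $\pi_0(k^{tA}_U)_{\mathrm{red}} = \pi_0(k^{t\mathbb{T}^n}_U)$ (the exterior classes $\alpha_i \in \pi_{-1}(k^{tA})$ contribute only nilpotents in degree $0$) is a reduced $\mathbb{F}_p$-algebra with no nontrivial $p$-power torsion units, so the image of $\chi$ modulo nilpotents is trivial; hence $\chi$ takes values in $1 + \mathrm{Nil}(\pi_0(k^{tA}_U))$.

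The main step, which I also expect to be the principal obstacle, is to use compactness of $\mathcal{L}$ to rule out this unipotent residual monodromy. My plan is to view $k^{t\mathbb{T}^n}_U \to k^{tA}_U$ as an iterated square-zero extension by the exterior generators and to use a deformation-theoretic argument to show that compact invertible $k^{tA}_U$-modules are rigid under such nilpotent thickenings: the obstructions to lifting an algebraic Picard class over $\pi_0(k^{t\mathbb{T}^n}_U)$ to a compact invertible $k^{tA}_U$-module should lie in $\mathrm{Ext}$-groups that vanish on the affine $U$. Combined with the previous paragraphs this yields that $\chi$ is trivial, $\mathcal{L}$ descends to a suspension of the unit in $\pic(k^{t\mathbb{T}^n}_U)$, and therefore $\mathcal{L}$ is itself a suspension of the unit in $(\st{A})_U$. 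Making the rigidity argument precise is delicate and will likely require carefully exploiting both the compactness of $\mathcal{L}$ and the affineness of $U$ to obtain the requisite vanishing of obstruction spaces.
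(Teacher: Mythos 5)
Your framework is the paper's: descend along $k^{t\mathbb{T}^n}_U \to k^{tA}_U$ via \Cref{descpictool}, and use regularity/$0$-affineness of $\mathfrak{U}$ to see that $\pic(k^{t\mathbb{T}^n}_U)$ is cyclic on the suspension of the unit. Your reduction of everything to the triviality of the monodromy character $\chi\colon \pi_1(\mathbb{T}^n) \to \pi_0(k^{tA}_U)^{\times}$ is also the right crux. But the argument stops exactly where the compactness hypothesis has to do its work. The $p$-power torsion observation only places $\chi$ in $1 + \mathrm{Nil}(\pi_0(k^{tA}_U))$ --- which is all that is true for a general invertible object, and is precisely why the preceding proposition controls only $\mathcal{L}^{\otimes p^s}$. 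The proposed ``rigidity under nilpotent thickenings'' step is not carried out, and I do not think it can be made to work as stated: the monodromy is a specific invariant of $\mathcal{L}$ together with its $S^1$-equivariant structure, not an obstruction to lifting a class of $\pi_0(k^{t\mathbb{T}^n}_U)$-modules, so vanishing of lifting obstruction groups over affine $U$ would not force $\chi = 1$.

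The paper kills the unipotent part by a trace argument, and this is where compactness enters. Since $\mathcal{L}$ is invertible, the endomorphism $a$ is multiplication by the scalar $\chi(a)$, so $\mathrm{tr}(a - 1 \mid \mathcal{L}) = (\chi(a)-1)\cdot \mathrm{tr}(\mathrm{id}_{\mathcal{L}})$, and $\mathrm{tr}(\mathrm{id}_{\mathcal{L}})$ is a unit for an invertible object; hence it suffices to show $\mathrm{tr}(a-1\mid\mathcal{L}) = 0$. By hypothesis $\mathcal{L}$ is the $U$-localization of a finite-dimensional representation $V$, traces of dualizable objects are preserved by the symmetric monoidal functor $\st{A} \to (\st{A})_U$, and $\mathrm{tr}(a-1\mid V) = 0$ in $k$ because $a$ has $p$-power order and therefore acts unipotently in characteristic $p$. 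This directly gives $\chi = 1$ (no separate reduction modulo nilpotents is needed), after which your first paragraph finishes the proof. Replacing your third paragraph with this trace computation would complete the argument.
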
 

\begin{proof} 
Suppose $\mathcal{L} \in \pic( (\st{A})_U)$. 
Each generator $a \in A$ induces an $S^1$-action on $(\st{A})_U$ given by
multiplication by $a$. 
In particular, multiplication by $a$ on $\mathcal{L}$ gives an element in 
$\pi_0 (k^{tA}_U)^{\times}$.
We claim that this element is trivial, or equivalently that  $a\colon \mathcal{L}
\to \mathcal{L}$ is homotopic to the identity. 
Since $\mathcal{L}$ is invertible in $(\st{A})_U$, it suffices to show that the
trace of the endomorphism $a - 1$ of $\mathcal{L}$ is equal to zero. 
However, we know that $\mathcal{L}$ arises as the $U$-localization of a
finite-dimensional $A$-representation $V$, which is dualizable in $\st{A}$. 
The trace of $a-1$ on $V$ is easily seen to be zero, so the trace of $a-1$ on
$\mathcal{L}$ is zero as well. 
This shows that every generator $a \in A$ acts as the identity on $\mathcal{L}$. 
By \Cref{descpictool}, this means that $\mathcal{L}$ descends to an
invertible module over $k^{t
\mathbb{T}^n}_U$. 
Since $k^{t \mathbb{T}^n}_U$ is regular, the Picard group is algebraic and the
result follows. 
\end{proof} 

Given a compact object in $(\st{A})_U$ (e.g., an  invertible one), we recall
that the obstruction to its being the $U$-localization of a compact object in
$\st{A}$ lives in $K_0$ (cf. \cite[5.2.2]{TT} for the analogous result for
extensions of perfect complexes over schemes). 

\section{Stratification via projective space}

It is known by the work of Benson-Iyengar-Krause that one can ``stratify'' objects of the stable module category
$\st{G}$ of
a $p$-group $G$ via the cohomology $H^{\mathrm{even}}(BG; k)$ and its
homogeneous prime ideals. In particular, one can classify the localizing
subcategories \cite{BIK}. 
More generally, one can carry this out for $\md( k^{hG})$. 

The key case is when $G$ is elementary abelian. 
In \cite{BIK}, this is proved by showing that the classification of localizing
subcategories of 
$\md( k^{hG})$ is equivalent to that of the $\infty$-category  of modules over a
ring spectrum whose homotopy groups are a \emph{polynomial ring.} 
Here the classification is much simpler. 

In this section, we use the comparison with the Tate construction for a torus
to give another approach to this reduction, and thus to their results.

\subsection{Generalities}

Let $\mathcal{C}$ be a presentable stable $\infty$-category. 
\begin{definition}
We recall that a \emph{localizing} subcategory $\mathcal{C}' \subset \mathcal{C}$ is a full
stable subcategory closed under arbitrary colimits. We will also assume that
any object of $\mathcal{C}$ equivalent to an object in $\mathcal{C}'$ is itself
in $\mathcal{C}'$.
We will let $\loc(\mathcal{C})$ denote the class
of all localizing subcategories of $\mathcal{C}$.

Given a full subcategory $\mathcal{D} \subset \mathcal{C}$, the intersection
of all localizing subcategories containing $\mathcal{D}$ is a localizing
subcategory, and is said to be the localizing subcategory \emph{generated} by
$\mathcal{D}$. \end{definition}

Let $\mathcal{C}, \mathcal{D}$ be presentable stable $\infty$-categories and
let $F\colon \mathcal{C} \to \mathcal{D}$ be a cocontinuous functor. 
Then if $M \in \mathcal{C} $ belongs to the localizing subcategory
generated by the $\left\{M_\alpha\right\}_{\alpha \in A} \subset \mathcal{C}$,
we can conclude that $F(M) $ belongs to the 
localizing subcategory
generated by the $\left\{F(M_\alpha)\right\}_{\alpha \in A} \subset \mathcal{D}$.
This follows easily because the preimage of a localizing subcategory is a
localizing subcategory. 
We obtain a map
\[ F_*\colon \locs(\mathcal{C})  \to \locs(\mathcal{D}) , \]
which sends a localizing subcategory $\mathcal{C}' \subset \mathcal{C}$ to the
localizing subcategory of $\mathcal{D}$ generated by $F(\mathcal{C}')$.

In this subsection, we will show that this map is an isomorphism for a faithful
Galois extension of $\e{\infty}$-rings with a \emph{connected} Galois group. 
It will be convenient to use the notion of \emph{descendability} \cite[\S
3-4]{galois}. 
\begin{definition} 
A morphism of $\e{\infty}$-rings $A \to B$ is \emph{descendable}
if the thick $\otimes$-ideal that $B$ generates in $\md(A)$ is all of
$\md(A)$.
\label{def:desc}
\end{definition} 

 Given an $A$-module $M$, one can form the cobar construction, which
is an augmented cosimplicial object
\[ M \to \left( M \otimes_A B \rightrightarrows M \otimes_A B \otimes_A B
 \triplearrows \dots \right).  \]
 If $A \to B$ is descendable, then $M$ is the totalization of the above cobar
 construction; moreover, $M$ is a retract of the partial totalization at a
 finite stage. It follows, for example, that the thick $\otimes$-ideals in
 $\md(A)$ that $M$ and $M \otimes_A B$ generate are equal.

\begin{proposition} 
\label{desc:inj}
Let $R \to R'$ be a morphism of $\e{\infty}$-rings which is descendable. 
Then the map $\locs( \md(R)) \to \locs(\md(R'))$ obtained by base-change is
injective, and has a section $\locs( \md(R')) \to \locs(\md(R))$ obtained by
restriction of scalars.
\end{proposition}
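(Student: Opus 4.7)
The plan is to prove the stronger statement that $G_* \circ F_* = \mathrm{id}$ on $\locs(\md(R))$, where $F = (-) \otimes_R R'$ is base-change and $G$ is restriction of scalars; this simultaneously yields injectivity of $F_*$ and exhibits $G_*$ as the required section. Both $F$ and $G$ are cocontinuous (for $G$, colimits of $R'$-modules are computed on underlying $R$-modules), so $F_*$ and $G_*$ are well-defined by the recipe recalled in the excerpt.

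The whole argument reduces to a single observation: for every $N \in \md(R)$, the localizing subcategory of $\md(R)$ generated by $N$ coincides with the one generated by the underlying $R$-module of $N \otimes_R R'$. Granting this, for any $\mathcal{C}' \in \locs(\md(R))$ the subcategory $G_*(F_*(\mathcal{C}'))$ unfolds, since $G$ is cocontinuous and carries the generators through, to the localizing subcategory of $\md(R)$ generated by $\{M \otimes_R R' : M \in \mathcal{C}'\}$, and the observation then forces this to coincide with $\mathcal{C}'$.

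I would establish the observation by handling the two inclusions separately. For the containment of the subcategory generated by $N \otimes_R R'$ in that generated by $N$, apply the cocontinuous functor $N \otimes_R (-) \colon \md(R) \to \md(R)$ to the tautology that $R'$ lies in $\md(R)$, which is itself the localizing subcategory generated by the unit $R$; since this functor sends $R$ to $N$, it carries $R'$ into the localizing subcategory generated by $N$. For the reverse containment, I would invoke the finite-totalization reformulation of descendability already recorded after \Cref{def:desc}: $N$ is a retract of a finite partial totalization of the cobar object $\{N \otimes_R R'^{\otimes \bullet + 1}\}$, and finite totalizations in a stable $\infty$-category are assembled from their terms by finitely many shifts, cofibers, and retracts. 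It then suffices to show that each term $N \otimes_R R'^{\otimes k}$ with $k \geq 1$ lies in the localizing subcategory generated by $N \otimes_R R'$, and this follows by iterating the previous cocontinuity argument applied to $(N \otimes_R R') \otimes_R (-)$ at input $R'^{\otimes k-1} \in \md(R)$.

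I do not expect a genuine obstacle here; the only conceptual input beyond formal manipulation of cocontinuous functors between localizing subcategories is the finite-totalization consequence of descendability, which is supplied by \cite[\S 3-4]{galois}.
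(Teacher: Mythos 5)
Your proof is correct and follows essentially the same route as the paper: both reduce to the observation that $M$ and $M \otimes_R R'$ generate the same localizing subcategory of $\md(R)$, which the paper obtains from the remark (after \Cref{def:desc}) that descendability makes $M$ a retract of a finite partial totalization of the cobar construction — exactly the argument you spell out. The only difference is cosmetic: the paper phrases this via thick $\otimes$-ideals and cites the remark, while you rederive it directly in the localizing-subcategory language.
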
 
\begin{proof} 
We have a forgetful right adjoint functor $\md(R') \to \md(R)$ given by
restriction of scalars, which is also
cocontinuous. We claim that this induces a {section} to the map 
$\locs( \md(R)) \to \locs(\md(R'))$.
Equivalently, we need to show that if $M \in \md(R)$, then the $R$-modules $M$
and $R' \otimes_R M$ generate the same localizing subcategory of $\md(R)$. 
This follows because, as remarked above, $M$ and $R' \otimes_R M$ even generate the
same \emph{thick $\otimes$-ideal} in $\md(R)$. 
\end{proof}

\begin{proposition} 
\label{sectionp}
Let $R$ be an $\e{\infty}$-ring and let $X$ be a connected finite complex. 
Then base-change $\md(R) \to \md(F(X_+, R))$ induces an isomorphism
$\locs(\md(R)) \simeq \locs( \md( F(X_+, R)))$ (whose inverse is induced by
the restriction of scalars functor).
\end{proposition}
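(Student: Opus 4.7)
The plan is to combine Proposition~\ref{desc:inj} with an argument using the connectedness of $X$. Since $X$ is pointed and connected, the basepoint inclusion $\ast \hookrightarrow X$ gives a retraction $\epsilon\colon F(X_+, R) \to R$ of the unit $u\colon R \to F(X_+, R)$, so $R$ is a retract of $F(X_+, R)$ as $R$-modules and consequently $u$ is descendable. By Proposition~\ref{desc:inj}, $u^*_*\colon \locs(\md(R)) \hookrightarrow \locs(\md(F(X_+, R)))$ is injective with left inverse $u_*$; it remains to verify $u^*_* \circ u_* = \mathrm{id}$.

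Fix $\mathcal{D} \in \locs(\md(F(X_+, R)))$ and set $\mathcal{D}_R = u_*(\mathcal{D})$. The forward inclusion $\mathcal{D} \subseteq u^*_*(\mathcal{D}_R)$ will come from a right-orthogonality argument: if $Y \in u^*_*(\mathcal{D}_R)^\perp$, then adjunction gives $u_* Y \in \mathcal{D}_R^\perp$, and since every $F(X_+, R)$-linear map is $R$-linear, for each $M \in \mathcal{D}$ the containment $\mathrm{Hom}_{F(X_+, R)}(M, Y) \subseteq \mathrm{Hom}_R(u_* M, u_* Y) = 0$ places $M$ inside $u^*_*(\mathcal{D}_R) = {}^\perp(u^*_*(\mathcal{D}_R)^\perp)$.

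For the reverse inclusion $u^*_*(\mathcal{D}_R) \subseteq \mathcal{D}$, I will reduce by colimit-closure to the claim that $u^* u_* M \in \mathcal{D}$ for each $M \in \mathcal{D}$. Here connectedness of $X$ is essential. My plan is to induct on a cellular filtration $\ast = X_0 \subset X_1 \subset \cdots \subset X_N = X$ whose attaching cells all have dimension $\geq 1$, possible since $X$ is connected and admits a CW structure with a single $0$-cell. Passing from $X_{i-1}$ to $X_i$ realizes $F((X_i)_+, R)$ as an extension of $F((X_{i-1})_+, R)$ by a shift of $R$; I then analyze the fiber of the counit $u^* u_* M \to M$ at each stage via the splitting $F(X_+, R) \simeq R \oplus \widetilde{F}(X, R)$ and the nilpotence of the augmentation ideal, exhibiting $u^* u_* M$ as a finite iterated cofiber of shifts of $M$, hence within $\mathcal{D}$.

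The main obstacle will be carrying out the inductive step rigorously. One must exploit the nilpotence of $\widetilde{F}(X, R)$---whose iterated tensor powers become increasingly coconnective precisely because $X$ is connected, forcing the reduced cohomology $\widetilde{H}^0(X, R)$ to vanish---to ensure that the cofiber analysis terminates after finitely many steps. This is the point where connectedness is indispensable: for disconnected $X$, the ring $\pi_0 F(X_+, R)$ acquires additional idempotents, generating new localizing subcategories in $\md(F(X_+, R))$ with no counterpart in $\md(R)$.
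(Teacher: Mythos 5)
Your forward inclusion is where the first gap lies. The step $\hom_{F(X_+,R)}(M,Y)\subseteq \hom_R(u_*M,u_*Y)$ is an underived intuition that fails for mapping spectra: the forgetful map need not be injective on homotopy (already for $R=Hk$, $X=S^1$ and $M=Y=k$ the augmentation module, $\pi_0\map_{F(S^1_+,Hk)}(k,k)$ is infinite-dimensional while $\pi_0\map_{Hk}(k,k)=k$), so vanishing of $\map_R(u_*M,u_*Y)$ does not force vanishing of $\map_{F(X_+,R)}(M,Y)$. By adjunction $\map_R(u_*M,u_*Y)\simeq\map_{F(X_+,R)}(u^*u_*M,Y)$, so what your hypothesis on $Y$ actually gives is that $Y$ is right-orthogonal to $u^*u_*M$; to pass from this to orthogonality to $M$ you need precisely that $M$ lies in the localizing subcategory generated by $u^*u_*M$ — that is the substantive point, it is never proved, and if you had it you would not need orthogonality at all. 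Moreover the identity $u^*_*(\mathcal{D}_R)={}^{\perp}\bigl(u^*_*(\mathcal{D}_R)^{\perp}\bigr)$ is unjustified: the proposition quantifies over all localizing subcategories, not only those generated by a set, and an arbitrary localizing subcategory need not be recovered from its right orthogonal; the paper's argument is purely formal exactly so as to avoid any such input.

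The reverse inclusion is only a plan, and its central assertion is not what the cellular filtration yields: $u^*u_*M=u_*M\otimes_R F(X_+,R)$ carries its $F(X_+,R)$-action through the right-hand tensor factor, so filtering $F(X_+,R)$ by duals of skeleta produces graded pieces of the form $u_*M\otimes_R(\text{duals of cellular subquotients})$ on which $F(X_+,R)$ acts through the cell factor (through the augmentation, for the bottom cell), not through $M$; for $R=Hk$ these are sums of shifts of the augmentation module $k$ whatever $M$ is, so they are not shifts of $M$ and there is no argument placing them in $\mathcal{D}$. (Indeed, the statement you are trying to prove by this induction, that $M$ and $u^*u_*M$ generate the same localizing subcategory, is essentially equivalent to the proposition itself.) Your nilpotence mechanism via coconnectivity of powers of $\widetilde{F}(X,R)$ also presumes $R$ connective, whereas $R$ is an arbitrary $\e{\infty}$-ring. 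The missing idea, which is where connectedness genuinely enters, is the one the paper uses: the evaluation $s\colon F(X_+,R)\to R$ at the basepoint is an $\e{\infty}$-ring map which is descendable because the iterated reduced diagonal $X\to X^{\wedge m}$ of a connected finite complex is null for $m$ large; granting this, everything is formal — base change along $s$ gives a map on $\locs$ which is a retraction of base change along $u$ (as $s\circ u\simeq\mathrm{id}_R$) and is injective by \Cref{desc:inj}, hence is a two-sided inverse, and the inverse agrees with the map induced by restriction of scalars. Note that the only structural fact your proposal actually exploits is that $u$ is split (hence descendable), which holds just as well for $X=S^0$, where the conclusion is false; so all of the content must be carried by precisely the two steps that are gapped.
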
 
\begin{proof} 
Choose a basepoint $\ast \in X$, so that we obtain a map $s\colon F(X_+, R) \to R$ of
$\e{\infty}$-rings.
The morphism 
$\locs(\md(R)) \to \locs( \md( F(X_+, R)))$
admits a section since $s$ is a section of $R \to F(X_+, R)$. 
This section 
$$\phi\colon \locs(  \md( F(X_+, R))) \to \locs(\md(R))$$
comes from the functor  
\[ \md(F(X_+, R)) \xrightarrow{\otimes_{F(X_+, R)} R }  \md(R)  , \]
given by extension of scalars along $s$.
However, $s\colon F(X_+, R) \to R$ is descendable \cite[Prop. 3.34]{galois}, so $\phi$ is
also an injection  by \Cref{desc:inj}.  Therefore, $\phi$ is an inverse
to the map in question.
\end{proof}

\begin{theorem} 
Let $R$ be an $\e{\infty}$-ring and let $G$ be a
topological group with the homotopy type of a 
connected finite complex. Let $R \to R'$ be a faithful $G$-Galois
extension.
Then extension and restriction of scalars induce isomorphisms $\locs(
\md(R)) \simeq \locs( \md(R'))$ inverse to one another.
\end{theorem}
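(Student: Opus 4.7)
The plan is to show that the extension-of-scalars map $\alpha\colon \locs(\md(R)) \to \locs(\md(R'))$ and the restriction-of-scalars map $\beta\colon \locs(\md(R')) \to \locs(\md(R))$ are inverse to each other. First, since a faithful Galois extension is descendable (by \cite[Th. 3.36]{galois}), \Cref{desc:inj} immediately gives $\beta \circ \alpha = \mathrm{id}$, so $\alpha$ is injective with section $\beta$.

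It remains to prove $\alpha \circ \beta = \mathrm{id}$; i.e., $\mathcal{L}' = \alpha(\beta(\mathcal{L}'))$ for every $\mathcal{L}' \in \locs(\md(R'))$. The inclusion $\alpha(\beta(\mathcal{L}')) \subseteq \mathcal{L}'$ reduces to showing that $R' \otimes_R N|_R \in \mathcal{L}'$ for each $N \in \mathcal{L}'$, since the full subcategory $\{M \in \md(R) : R' \otimes_R M \in \mathcal{L}'\}$ of $\md(R)$ is then a localizing subcategory containing each generator $N|_R$ of $\beta(\mathcal{L}')$. The Galois isomorphism $R' \otimes_R R' \simeq F(G_+, R')$ combined with the dualizability of $G_+$ in $\sp$ (which holds since $G$ is a finite complex) identifies $R' \otimes_R N|_R$ with the $R'$-module $D(G_+) \otimes_S N$, and since $D(G_+)$ is a finite spectrum, this is built from $N$ by finitely many cofiber sequences and suspensions, hence lies in $\mathcal{L}'$.

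For the reverse inclusion $\mathcal{L}' \subseteq \alpha(\beta(\mathcal{L}'))$, fix $N \in \mathcal{L}'$. Then $R' \otimes_R N|_R$ belongs to $\alpha(\beta(\mathcal{L}'))$ by construction, so it suffices to exhibit $N$ as an $R'$-module retract of $R' \otimes_R N|_R$. Using the basepoint $e \in G$ of the topological group $G$, the pointed space $G_+$ admits a splitting $G_+ \simeq G \vee S^0$ (with $G$ pointed at $e$), which yields an $R'$-module splitting $R' \otimes_R N|_R \simeq N \oplus F(G, N)$; the inclusion of the $N$-summand and the evaluation-at-$e$ projection exhibit the desired retract. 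Since localizing subcategories are closed under retracts, $N \in \alpha(\beta(\mathcal{L}'))$, completing the proof.

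The main obstacle is the careful bookkeeping of $R'$-module structures in the Galois identification $R' \otimes_R N|_R \simeq F(G_+, N)$: the left and right $R'$-module structures on $R' \otimes_R R'$ correspond under the Galois isomorphism to the pointwise and the $G$-twisted structures on $F(G_+, R')$ respectively, while both arguments above require the pointwise structure. Connectedness of $G$ is precisely what ensures these two module structures become equivalent at the $R'$-module level, because the induced $G$-action on the homotopy category of $\md(R')$ factors through $\pi_0(G)=0$. An alternative organization of the proof, which bypasses these subtleties, would apply \Cref{sectionp} to $R'$ and the connected finite complex $G$, using the resulting isomorphism $(i_1)_*\colon \locs(\md(R')) \simeq \locs(\md(R' \otimes_R R'))$ together with the injectivity of extension along the descendable composite $R \to R' \otimes_R R'$.
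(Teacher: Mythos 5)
Your first step is fine: faithful $G$-Galois extensions are descendable, so \Cref{desc:inj} gives that extension of scalars is injective on $\locs$ with restriction as a section. The gap is in the second half, and it is exactly at the point you flag as "the main obstacle." The identification of $R' \otimes_R N|_R$ with $D(G_+) \otimes N \simeq F(G_+, N)$ carrying the \emph{pointwise} $R'$-structure is false in general, and connectedness of $G$ does not repair it: what obstructs the untwisting is not the action of $\pi_0(G)$ on the homotopy category but the monodromy action of $\pi_1(G)$ by automorphisms of the identity functor of $\md(R')$ — precisely the monodromy that \Cref{descpictool} has to contend with. Correctly, $R' \otimes_R N|_R$ is a limit over the space $G$ of a local system with fibers $g^* N$, and this local system can have nontrivial monodromy even for $G$ connected. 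A concrete counterexample inside the paper's own setting: take the faithful $S^1$-Galois extension $R = k^{hS^1} \to R' = k^{hC_p}$ of \Cref{thismapisgalois} and $N = k$ the augmentation $R'$-module. Then $N|_R \simeq R/x$ (cofiber of the Euler class), so $R' \otimes_R N|_R \simeq R'/\beta$ with $\beta \in \pi_{-2}(R')$ the \emph{nonzero} Euler class of the restricted line bundle; on homotopy the exterior class $\alpha \in \pi_{-1}(R')$ acts nontrivially on $\pi_*(R'/\beta)$, so this is not $N \oplus \Sigma^{-1} N$. Moreover $N$ is not a retract of it in $\md(R')$: $\pi_0 \hom_{R'}(N, N) \simeq k[C_p]$ is $p$-dimensional, while $\pi_0 \hom_{R'}(R'/\beta, N)$ is $1$-dimensional, so no retraction can exist. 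Thus your step exhibiting $N$ as an $R'$-module retract of $R' \otimes_R N|_R$ fails, and no thick-subcategory (finite-cell) argument can replace it — in this example $N$ lies in the localizing subcategory generated by $R' \otimes_R N|_R$ only through genuinely infinite constructions.

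For the record, your first inclusion $\alpha(\beta(\mathcal{L}')) \subseteq \mathcal{L}'$ is salvageable without the false identification: writing $R' \otimes_R N|_R$ as a finite (twisted) limit over the cells of $G$ with stalks $g^*N$, connectedness gives $g^* N \simeq N$, so the object lies in the thick subcategory generated by $N$. But the reverse inclusion is where the real content lies, and your closing one-sentence "alternative organization" is essentially the paper's actual proof: take $\widetilde{R} = R'$ (allowed since $R \to R'$ is descendable), use the Galois equivalence $R' \otimes_R R' \simeq F(G_+, R')$ together with \Cref{sectionp} applied to the connected finite complex $G$, and chase a commutative square of $\locs$-maps with \Cref{desc:inj} to conclude that restriction of scalars is injective, hence bijective. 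As submitted, however, the main argument has a genuine gap at the retract step.
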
 
\begin{proof} 
It suffices to show that the restriction of scalars functor
\[ \md(R') \to \md(R)  \]
induces an \emph{injection} on $\locs$, by \Cref{desc:inj}. 

Choose a descendable $\e{\infty}$-$R$-algebra $\widetilde{R}$ such that $R'
\otimes_R \widetilde{R} \simeq F(G_+, \widetilde{R})$. Then we have  a
commutative diagram
\[ \xymatrix{
\md(R') \ar[d] \ar[r]^{\otimes_R \widetilde{R}} &  \md(R' \otimes_R
\widetilde{R}) \ar[d]  \\
\md(R) \ar[r]^{\otimes_R \widetilde{R}} &  \md(\widetilde{R})
}.\]
The horizontal arrows are given by extension of scalars along $R \to \widetilde{R}$
and the vertical arrows come from restriction of scalars. 
Applying $\locs$, we obtain a commutative diagram
\[ \xymatrix{
\locs(\md(R')) \ar[d] \ar@{(->}[r]^{\otimes_R \widetilde{R}} &  \locs( \md(R' \otimes_R
\widetilde{R})) \ar[d]^{\simeq}  \\
\locs(\md(R)) \ar@{(->}[r]^{\otimes_R \widetilde{R}} &  \locs(\md(\widetilde{R}))
}\]
The left vertical arrow is an isomorphism by 
\Cref{sectionp}, and the horizontal arrows are injections by \Cref{desc:inj}. 

The diagram shows that restriction of scalars induces an injection $\locs(
\md(R')) \hookrightarrow \locs( \md(R))$. 
By \Cref{desc:inj}, it is also a surjection and is therefore an isomorphism
(with inverse induced by extension of scalars) as desired. 
\end{proof} 

The above should be compared with \cite[Th. 4.4]{BIK}, which is the
necessary comparison tool in the Benson-Iyengar-Krause proof. 

\subsection{The classification}

We note the stratification result for cochains on the classifying space
of the torus. 
Here the argument uses the existence of ``residue fields'' as in
\cite{angeltveit}. 
\begin{proposition}[{cf. \cite[Th. 5.2]{BIK}}] 
Let $k$ be a field of characteristic $p$. 
The localizing subcategories of $\md(k^{h \mathbb{T}^n})$ (resp. $\md(k^{t
\mathbb{T}^n})$) are in bijection with the
subsets of the set of homogeneous prime ideals of $\pi_*(k^{h \mathbb{T}^n})$
(resp. those not containing the irrelevant ideal).
\end{proposition}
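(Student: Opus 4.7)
The plan is to adapt the residue field technique of Hopkins--Palmieri--Strickland \cite{HPSt}, extended to structured ring spectra in \cite{angeltveit}, to the $\e{\infty}$-ring $k^{h\mathbb{T}^n}$, whose homotopy $\pi_*(k^{h\mathbb{T}^n}) \simeq k[x_1, \dots, x_n]$ is graded Noetherian and regular. For each homogeneous prime $\mathfrak{p}$ I would construct an $\ei$-$k^{h\mathbb{T}^n}$-algebra $\kappa(\mathfrak{p})$ realizing the graded residue field of $\pi_*(k^{h\mathbb{T}^n})$ at $\mathfrak{p}$: first localize at the multiplicative set of homogeneous elements outside $\mathfrak{p}$, then take iterated $\ei$-quotients by a regular sequence generating $\mathfrak{p}$ in the localization. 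Regularity, together with \cite{angeltveit}, guarantees that each successive quotient is again an $\ei$-algebra with the expected homotopy.

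Next, define a support function
\[
\sigma(M) \;=\; \{\, \mathfrak{p} : \kappa(\mathfrak{p}) \otimes_{k^{h\mathbb{T}^n}} M \neq 0 \,\}
\]
on $\md(k^{h\mathbb{T}^n})$. This assignment is closed under equivalences, respects filtered colimits and cofiber sequences (since tensoring with $\kappa(\mathfrak{p})$ does), and sends a localizing subcategory to the union of the supports of its objects. In the reverse direction, a subset $S$ of homogeneous primes is sent to the localizing subcategory generated by $\{\kappa(\mathfrak{p})\}_{\mathfrak{p} \in S}$.

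The crux, and the step I expect to be the main obstacle, is the detection statement: if $M \neq 0$, then $\sigma(M) \neq \emptyset$. Here I would follow the HPS/BIK strategy: if $M \neq 0$, some homogeneous prime $\mathfrak{p}$ lies in the support of the graded $\pi_*(k^{h\mathbb{T}^n})$-module $\pi_*(M)$; after localizing at $\mathfrak{p}$, the Koszul complex on a regular system of parameters generating $\mathfrak{p}$ in the localization produces a nonzero map from a shift of $\kappa(\mathfrak{p})$ into the localized module, witnessing that $\kappa(\mathfrak{p}) \otimes_{k^{h\mathbb{T}^n}} M \neq 0$. Once detection is established, the fact that $\pi_*\kappa(\mathfrak{p})$ is a graded field--so every $\kappa(\mathfrak{p})$-module is a sum of shifts of the unit and $\md(\kappa(\mathfrak{p}))$ has only two localizing subcategories--combined with the standard Hopkins--Smith/Neeman formal arguments upgrades the support map to the asserted bijection.

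For the Tate version, recall from \thref{tateex} that $k^{t\mathbb{T}^n}$ is the finite localization of $k^{h\mathbb{T}^n}$ away from the iterated cofiber $k^{h\mathbb{T}^n}/(x_1,\dots,x_n)$; this cofiber generates the same localizing subcategory as $\kappa(\mathfrak{m})$ for the irrelevant ideal $\mathfrak{m} = (x_1,\dots,x_n)$. Under the first bijection, passing from $\md(k^{h\mathbb{T}^n})$ to $\md(k^{t\mathbb{T}^n})$ simply discards those subsets containing $\mathfrak{m}$, which yields the second bijection.
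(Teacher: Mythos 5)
Your proposal is correct and follows essentially the same route as the paper: construct $\ei$-residue fields $\kappa(\mathfrak{p})$ via homogeneous localization and Angeltveit's obstruction theory (possible since $\pi_*(k^{h\mathbb{T}^n})$ is regular and the localizations are even periodic), then invoke the standard support/stratification formalism (detection, graded-field property of $\pi_*\kappa(\mathfrak{p})$, local-to-global principle) to get the bijection, with the Tate case handled by discarding the irrelevant ideal. The only cosmetic difference is that you deduce the $k^{t\mathbb{T}^n}$ statement from the $k^{h\mathbb{T}^n}$ one via the finite localization of \thref{tateex}, whereas the paper simply reruns the same argument ignoring the irrelevant ideal; both are fine.
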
 

Let $A$ be an abelian $p$-group and construct the $\mathbb{T}^n$-action on
$\st{A} \simeq \md(k^{tA})$ as before, as well as the map $k^{t \mathbb{T}^n}
\to k^{tA}$, which is a Galois extension with Galois group isomorphic to
$\mathbb{T}^n$. 
Combining this result with the previous subsection,  
\Cref{galoistate1}, and \Cref{basicdescresult}, one obtains: 

\begin{theorem}[Benson-Iyengar-Krause \cite{BIK}] 
Let $A$ be an abelian $p$-group of rank $n$.
The functor $\md( k^{hA}) \to \md(k^{h \mathbb{T}^n})$ induced by restriction
of scalars along $k^{h \mathbb{T}^n} \to k^{hA}$ induces an isomorphism on 
$\locs$. 
As a result, the localizing subcategories of $\md(k^{hA})$ are in bijection with the
subsets of the set of homogeneous prime ideals of $\pi_*(k^{h \mathbb{T}^n})$.
\end{theorem}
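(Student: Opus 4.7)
The plan is to reduce the theorem to a direct combination of three results already established in the paper: the existence of the faithful $\mathbb{T}^n$-Galois extension $k^{h\mathbb{T}^n} \to k^{hA}$, the general descent theorem for $\locs$ along Galois extensions with connected Galois group, and the classification of localizing subcategories of $\md(k^{h\mathbb{T}^n})$.

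First, I would invoke \Cref{torusact} together with \Cref{thismapisgalois}: choosing a surjection $L \twoheadrightarrow A$ with $L \simeq \mathbb{Z}^n$ and kernel $L'$, we obtain a map $k^{hBL'} \to k^{hA}$ that is a faithful $BL$-Galois extension, and $BL \simeq \mathbb{T}^n$, $BL' \simeq \mathbb{T}^n$. Since $\mathbb{T}^n$ is a connected topological group with the homotopy type of a finite complex, the general Galois descent theorem for $\locs$ proved just above applies, giving inverse bijections
\[ \locs(\md(k^{h\mathbb{T}^n})) \simeq \locs(\md(k^{hA})), \]
where the forward map is extension of scalars and the inverse is restriction of scalars. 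This is exactly the first statement of the theorem.

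Second, I would combine this isomorphism with the preceding proposition, which classifies $\locs(\md(k^{h\mathbb{T}^n}))$ in bijection with the subsets of the set of homogeneous prime ideals of $\pi_*(k^{h\mathbb{T}^n}) \simeq k[x_1, \dots, x_n]$. Composing, we obtain the desired bijection between $\locs(\md(k^{hA}))$ and subsets of $\mathrm{Spec}^{\mathrm{hgs}} \pi_*(k^{h\mathbb{T}^n})$. One can observe also that under the map $\pi_*(k^{h\mathbb{T}^n}) \to \pi_*(k^{hA})$, the induced map on reduced homogeneous spectra is a homeomorphism (e.g.\ since $k^{hA}$ is a finite free extension whose homotopy realizes a finite integral extension on even degrees), which identifies the indexing sets in the way customary in \cite{BIK}.

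There is essentially no obstacle once the previous results are in hand; the work has already been done. The only thing to double-check is that the $\mathbb{T}^n$-Galois extension hypothesis of the descent theorem for $\locs$ is genuinely met in the integral (not merely Tate-localized) setting, which is precisely the content of \Cref{thismapisgalois}: the extension $k^{hBL'} \to k^{hA}$ is descendable (in fact a finite free extension of homotopy rings), the torsor identification $k^{hA} \otimes_{k^{hBL'}} k^{hA} \simeq F(BL_+, k^{hA})$ follows from the Eilenberg--Moore spectral sequence for the fiber square of \Cref{thismapisgalois}, and faithfulness is inherited from descendability. This completes the plan.
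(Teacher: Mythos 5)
Your proposal is correct and matches the paper's own (implicit) argument: the paper obtains this theorem by exactly the same combination of \Cref{thismapisgalois} (the faithful $BL$-Galois extension $k^{hBL'}\to k^{hA}$ with $BL\simeq BL'\simeq\mathbb{T}^n$), the general theorem that a faithful Galois extension with connected finite-complex Galois group induces an isomorphism on $\locs$, and the residue-field classification of $\locs(\md(k^{h\mathbb{T}^n}))$. Your extra remark identifying the homogeneous spectra of $\pi_*(k^{h\mathbb{T}^n})$ and $\pi_*(k^{hA})$ is harmless but not needed for the statement as phrased.
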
 

\begin{theorem}[Benson-Iyengar-Krause \cite{BIK}] 
Let $A$ be an abelian $p$-group of rank $n$.
The functor $\st{A} \to \md(k^{t \mathbb{T}^n})$ induced by restriction
of scalars along $k^{t \mathbb{T}^n} \to k^{tA}$ induces an isomorphism on 
$\locs$. 
As a result, the localizing subcategories of $\st{A}$ are in bijection with the
subsets of the set $\mathbb{P}^{n-1}_k$ of homogeneous prime ideals of $\pi_*(k^{t \mathbb{T}^n})$
which do not contain the irrelevant ideal.
\end{theorem}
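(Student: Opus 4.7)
The plan is to assemble this from two ingredients already established in the excerpt. First, by \Cref{basicdescresult} and \Cref{galoistate1}, the map $k^{t\mathbb{T}^n} \to k^{tA}$ is a faithful Galois extension with Galois group $\mathbb{T}^n = BL$, and this extension realizes the $\mathbb{T}^n$-action on $\st{A} \simeq \md(k^{tA})$ with fixed points $\md(k^{tBL'}) \simeq \md(k^{t\mathbb{T}^n})$. Since $\mathbb{T}^n$ is a connected topological group with the homotopy type of a finite complex, the theorem of the previous subsection applies, so restriction and extension of scalars give mutually inverse isomorphisms
\[
\locs(\st{A}) \;\simeq\; \locs(\md(k^{t\mathbb{T}^n})).
\]

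Second, by the proposition on the torus cochain algebras (the analogue of \cite[Th.~5.2]{BIK} stated just above), the localizing subcategories of $\md(k^{t\mathbb{T}^n})$ are in bijection with the subsets of the set of homogeneous prime ideals of $\pi_*(k^{h\mathbb{T}^n})$ that do not contain the irrelevant ideal, i.e.\ with the subsets of $\mathbb{P}^{n-1}_k = \proj(\pi_{-*} k^{h\mathbb{T}^n})$. Composing this bijection with the previous one produces the desired classification for $\st{A}$.

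The only thing that really needs to be checked is that the hypotheses of the Galois-descent theorem for $\locs$ apply here, namely that $\mathbb{T}^n$ is connected with the homotopy type of a finite complex (which is immediate) and that $k^{t\mathbb{T}^n}\to k^{tA}$ is a faithful Galois extension (which was proved in \Cref{thismapisgalois} for $k^{hBL'}\to k^{hA}$ and then transferred to the Tate versions in \Cref{galoistate1}, using that Tate constructions are base-changes of finite localizations away from $(x_1,\dots,x_n)$). I do not foresee any obstacle beyond citing these results in the right order; the work done in the previous subsection, together with the residue-field computation for $k^{t\mathbb{T}^n}$, already encapsulates all the nontrivial content.
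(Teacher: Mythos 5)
Your proposal is correct and follows essentially the same route as the paper: the paper's proof is precisely the combination of the faithful $\mathbb{T}^n$-Galois extension $k^{t\mathbb{T}^n} \to k^{tA}$ from \Cref{galoistate1} and \Cref{basicdescresult}, the general theorem that restriction and extension of scalars along a faithful Galois extension with connected finite-complex Galois group induce inverse bijections on $\locs$, and the residue-field classification of localizing subcategories of $\md(k^{t\mathbb{T}^n})$. No further comment is needed.
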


\bibliographystyle{amsalpha}
\bibliography{Dadetheorem}
\end{document}